\newcommand{\bq}{\begin{equation}}
\newcommand{\eq}{\end{equation}}
\newcommand{\bc}{\begin{center}}
\newcommand{\ec}{\end{center}}
\newcommand{\bit}{\begin{itemize}}
\newcommand{\eit}{\end{itemize}}
\newcommand{\ben}{\begin{enumerate}}
\newcommand{\een}{\end{enumerate}}
\newcommand{\pprime}{{\prime\prime}}
\theoremstyle{plain}
\newtheorem{theorem}{Theorem}[section]
\newtheorem*{theorem*}{Theorem}
\newtheorem{proposition}[theorem]{Proposition}
\newtheorem{lemma}[theorem]{Lemma}
\newtheorem{corollary}[theorem]{Corollary}
\newtheorem{remark}[theorem]{Remark}
\newtheorem{definition}[theorem]{Definition}
\renewcommand\thefigure{\arabic{figure}}
\begin{document}

%\journal{\ }
\journal{Expositiones Mathematicae}
%\journal{(internal report CC16-1)}

\begin{frontmatter}

\title{Multiparameter Fuss--Catalan numbers with application to algebraic equations}

\author[cc]{S.~R.~Mane}
\ead{srmane001@gmail.com}

\address[cc]{Convergent Computing Inc., P.~O.~Box 561, Shoreham, NY 11786, USA}

\begin{abstract}
We present an exposition on the Fuss--Catalan numbers, which are a generalization of the well known Catalan numbers.
The literature on the subject is scattered
(especially for the case of multiple independent parameters, as will be explained in the text),
with overlapping definitions by different authors and duplication of proofs.
This paper collects the main theorems and identities, with a consistent notation.
Contact is made with the works of numerous authors, including the early works of Lambert and Euler.
We demonstrate the application of the formalism to solve algebraic equations by infinite series. 
Our main result in this context is a new necessary and sufficient formula for the domain of absolute convergence 
of the series solutions of algebraic equations, which corrects and extends previous work in the field.
Some historical material is placed in an Appendix.
  
\end{abstract}

\vskip 0.25in
\begin{keyword}
% keywords here, in the form: keyword \sep keyword
Fuss--Catalan numbers
\sep generating functions
%\sep convolution identities
%\sep Bring-Jerrard quintic normal form
\sep solutions of algebraic equations by infinite series
\sep complete Reinhardt domain
\sep domain of absolute convergence 
%\sep functions of several complex variables

%\vskip 0.25in
% MSC2010 codes here, in the form: \MSC code \sep code
\MSC[2010]{
primary 
05-02  % combinatorics research exposition (monographs, survey articles)
\sep 32A05;  % power series several complex variables  
secondary
30B10  % power series (includes lacunary series) single complex variable  
\sep 32A07  % special domains (Reinhardt, Hartogs, circular, tube)
}

%\vskip 0.25in
% PACS codes here, in the form: \PACS code \sep code
%\PACS{
%02.20.Qs % general properties, structure, and representation of Lie groups
%\sep 02.20.Hj % classical groups
%\sep 02.30.Gp % special functions
%\sep 29.27.Hj % polarized beams
%\sep 29.20.D- % cyclic accelerators and storage rings
%\sep 29.20.db % storage rings and colliders
%\sep 29.27.-a   %Beams in particle accelerators  
%\sep 41.85.-p % beam optics
%\sep 02.30.Ik %Integrable systems 
%\sep 02.60.Lj % Ordinary and partial differential equations; %boundary value problems
%\sep 13.40.Em % Electric and magnetic moments
%}

\end{keyword}
\end{frontmatter}

\setcounter{equation}{0}
\section{\label{sec:intro} Introduction}
We employ the standard notation 
$\mathbb{C}$ for the complex numbers,
$\mathbb{R}$ for the reals and
$\mathbb{N}$ for the natural numbers $\{0,1,2,\dots\}$.
The Catalan numbers are defined, for $t \in \mathbb{N}$, as
\bq
\label{eq:def_cat}
C_t = \frac{1}{t+1}\binom{2t}{t} = \frac{(2t)!}{(t+1)!t!} \,.
\eq
(It is more usual to write $C_n$ instead of $C_t$,
but there are too many other meanings for $n$ later in this paper,
so to avoid confusion I shall employ $t$ not $n$.)
Catalan numbers have been claimed to be the most ubiquitions numbers in combinatorics,
second only to the binomial coefficients themselves,
e.g.~see the text by Stanley \cite{Stanley_book}.
It is also shown in \cite{Stanley_book} that Catalan numbers 
are the solutions to numerous counting problems.
For example, Euler showed that $C_t$ gives the number of triangulations of a convex $(t+2)$-gon.
(See \cite{Stanley_book} for an extensive historical description,
including quotes from the correspondence of Euler and other authors.)
A generalization of the Catalan numbers,
known as the {\em Fuss--Catalan numbers}, 
are the principal objects of interest in this paper.
(They are named after Nicolas Fuss and Eug{\`e}ne Charles Catalan;
see the text by Graham et al.~\cite{GrahamKnuthPatashnik} for a historical discussion.)
First let $m,t\in\mathbb{N}$ and define
\bq
\label{eq:fc_def_int}
A_t(m) = \frac{1}{(m-1)t+1}\binom{mt}{t} \,.
\eq
The Catalan numbers are the special case where $m=2$.
Then $A_t(m+1,1)$ counts the number of dissections of a convex $(mt+2)$-gon into regions that are $(m+2)$-gons
\cite[exercise A14]{Stanley_book}.
The term `dissection' means the diagonals joining the vertices of the $(mt+2)$-gon,
to form the $(m+2)$-gon, do not intersect in their interiors.
See \cite{Stanley_book} for details.
However, our interest extends beyond combinatorics.
We require a definition not restricted to integers.
We define the Fuss--Catalan numbers, for $\mu,r\in\mathbb{C}$ and $t\in\mathbb{N}$,
as $A_0(\mu,r) := 1$ and for $t\ge1$ via
\bq
\label{eq:fc_def_intro}
A_t(\mu,r) := \frac{r}{t!}\prod_{j=1}^{t-1}(t\mu+r-j) \,.
\eq
The above expression is well-defined for all $\mu,r\in\mathbb{C}$. 
We can employ the Gamma function to write
\bq
\label{eq:fc_def_gamma}
A_t(\mu,r) = r\, \frac{\Gamma(t\mu+r)}{\Gamma(t+1)\Gamma(t(\mu-1)+r+1)} \,.
\eq
However, this expression contains potential $0/0$ problems if the arguments of the Gamma functions equal zero or a negative integer.
We shall employ eq.~\eqref{eq:fc_def_intro} in this paper.
There are other equivalent definitions of the Fuss--Catalan numbers;
for example the text by Graham et al.~\cite{GrahamKnuthPatashnik} employs generalized binomial coefficients.
All of the applications in this paper will in fact treat only $\mu,r\in\mathbb{R}$.
Note that eq.~\eqref{eq:fc_def_int} is the special case $\mu=m$ and $r=1$.

Concomitant with the Fuss--Catalan numbers is their generating function,
and in fact we shall mostly work with the generating function below
(here $z\in\mathbb{C}$)
\bq
\label{eq:b_genfcn_intro}
B_\mu(r;z) = \sum_{t=0}^\infty A_t(\mu,r)z^t \,.
\eq
It is proved in \cite{GrahamKnuthPatashnik} that $B_\mu(r;z)$ has the remarkable property $B_\mu(1;z)^r = B_\mu(r;z)$.
Also, and very importantly, $B_\mu(1;z)$ satisfies the following equation for $f(z)$ 
(again, see \cite{GrahamKnuthPatashnik})
\bq
\label{eq:f_gkp_intro}
f = 1 + zf^\mu \,.
\eq
Variations of this equation were solved, using power series, 
by Lambert \cite{Lambert_1758,Lambert_1770} and Euler \cite{Euler_1779}.
In both cases, their solutions are now known to be Fuss--Catalan series; this will be shown below.
(I shall use the term `Fuss--Catalan series' as a shorthand for 
`power series whose coefficients are Fuss--Catalan numbers.')
It is then very natural to extend eq.~\eqref{eq:f_gkp_intro}
to functions of multiple $k>1$ complex variables 
\bq
\label{eq:f_mult_intro}
f = 1 + z_1f^{\mu_1} +\cdots +z_kf^{\mu_k} \,.
\eq
Here $z_1,\dots,z_k \in\mathbb{C}$ 
and also $\mu_1,\dots,\mu_k \in\mathbb{C}$.
Analogous to eq.~\eqref{eq:f_gkp_intro},
the solution of eq.~\eqref{eq:f_mult_intro} 
is also given by a generating function, a multinomial power series in $z_1,\dots,z_k$,
where the series coefficients are `multiparameter Fuss--Catalan numbers.'

This brings us to the heart of this paper.
The multiparameter Fuss--Catalan numbers will be defined below.
However, it turns out that the literature on the multiparameter Fuss--Catalan numbers is scattered.
As can be seen from above, there are two broad threads, 
i.e.~combinatorics and the theory of several complex variables.
Different authors have published overlapping (not always equivalent) definitions, with duplication of theorems and proofs.
It is the purpose of this paper to collect together the literature on the multiparameter Fuss--Catalan numbers,
with a consistent notation and references to the various theorems and proofs by diverse authors.
In particular, consider the general algebraic equation of degree $n$,
with $x\in\mathbb{C}$ and complex coefficients $a_0,\dots,a_n$
\bq
\label{eq:alg_eq_n_intro}
a_0 +a_1 x + \cdots +a_n x^n = 0 \,.
\eq
It is known that eq.~\eqref{eq:alg_eq_n_intro}
can be solved by expressing $x$ in a multivariate series 
(more accurately, a Laurent--Puiseux series) in the coefficients $a_0,\dots,a_n$. 
This can be accomplished by an application of the Lagrange Inversion Theorem;
indeed Lagrange himself did so as a demonstration of his theorem,
for the special case of the trinomial \cite{Lagrange_works}.
Clearly, eq.~\eqref{eq:alg_eq_n_intro} can be cast in the form of
eq.~\eqref{eq:f_mult_intro}, and the solution is a (multiparameter) Fuss--Catalan series.
This highlights two broad themes in this paper:
the literature on combinatorics treats integer-valued parameters,
whereas that on complex variables treats algebraic equations (polynomials),
but both are subsumed into a general framework of Fuss--Catalan series.
Note that the exponents $\mu_1,\dots,\mu_k$ in eq.~\eqref{eq:f_mult_intro}
are arbitrary real (or in principle complex) numbers, 
and are {\em not} restricted to be integers (or rational numbers).
Some authors, such as Mohanty \cite{Mohanty_1966}, recognize this fact, but most do not.
Mohanty's work will be important below.
Contact will also be made with the works of numerous other authors such as 
Euler, Lagrange and Lambert (mentioned above),
Klein (solution of the quintic),
Gould,
Mellin and
Raney, to name a few.
Mellin \cite{Mellin} employed his eponymous transform to solve eq.~\eqref{eq:alg_eq_n_intro};
it will be shown below that his solution is a Fuss--Catalan series.
Ramanujan \cite{Ramanujan_Pt1} also published briefly on the subject,
the equation and his series solution will be cited below; it is of course a Fuss--Catalan series.
Significantly, Ramanujan derived a bound for the radius of convergence of his series (many other authors did not).
We shall derive two bounds for the absolute convergence 
of the series solution of
eq.~\eqref{eq:f_mult_intro}.
The first is necessary but in general not sufficient
and the second is sufficient but in general not necessary.
A necessary and sufficient bound for absolute convergence 
is not known at this time. 
However, for the special case of an {\em algebraic equation}, 
we shall present a new necessary and sufficient bound for the absolute convergence of the series solution
of eq.~\eqref{eq:alg_eq_n_intro}
in Sec.~\ref{sec:conv3}.
The new bound is based on earlier work by
Passare and Tsikh \cite{Passare_Tsikh}.
Some counterexamples to their results will be displayed below;
this indicates the need for a more careful treatment of the problem.

On a more personal level,
in a recent paper \cite{DM3}, 
Dilworth and this author derived the analytical solution for the 
probability mass function of the geometric distribution of order $k$ \cite{Feller}.
The roots of the associated recurrence relation were obtained as series in Fuss--Catalan numbers.
It was recognized that Fuss--Catalan series are a potentially powerful tool to solve related problems,
and in a follow-up paper \cite{DM4},
they were applied to solve additional problems for success runs of Bernoulli trials.
The title of \cite{DM4} was deliberately worded 
``Applications of Fuss--Catalan Numbers to Success Runs of Bernoulli Trials.''
This paper will not treat problems of probability and statistics,
but it is this author's personal belief that (multiparameter) Fuss--Catalan series offer great promise
to solve problems in numerous subfields of mathematics.
This motivates the desire to collect the literature on the subject in one place,
with a consistent notation and to assemble together the various duplicated theorems and proofs.

The structure of this paper is as follows. 
The basic definitions of Fuss--Catalan numbers,
their generating functions and relevant theorems are presented in
Sec.~\ref{sec:defs}.
Bounds for the absolute convergence of Fuss--Catalan series
are derived in Sec.~\ref{sec:conv1}.
The application to algebraic equations
is presented in Sec.~\ref{sec:algeq1}. 
The quintic is sufficiently important that it is placed
in a separate section in Sec.~\ref{sec:quintic}.
The trinomial equation is also sufficiently important that it is placed
in a separate section in Sec.~\ref{sec:tri}.
The domain of absolute convergence for the solutions of
algebraic equations by infinite series is discussed in 
Sec.~\ref{sec:conv2},
where it is shown that a new, more careful treatment is required,
and a new necessary and sufficient bound is presented in 
Sec.~\ref{sec:conv3}.
A sample nontrivial application of the new bound is presented
in Sec.~\ref{sec:principal_brioschi_quintic}, 
for the principal and Brioschi quintics.
Some material, including historical material, 
is relegated to \ref{app:app}.
In \ref{app:sturmfels}, 
contact is made with the work of Sturmfels \cite{Sturmfels}
on the solutions of algebraic equations via so-called
$\mathscr{A}$-hypergeometric series.

A few disclaimers and words of caution follow.
First, it is important to note that there are complex roots in many of the series,
hence branch cuts are required to obtain well-defined expressions.
Overall, this detail is not clearly (or explicitly) addressed in the literature, but it is important.
The claimed series `solution' of eq.~\eqref{eq:alg_eq_n_intro} 
may be erroneous (or meaningless) if an appropriate branch cut is not specified.
The series may converge, but not to the root of the original equation.
The subject of branch cuts will be discussed below.

Next, no claim is made here that the use of a series to solve 
eq.~\eqref{eq:f_mult_intro}
is a computationally efficient algorithm,
nor that a series solution of the algebraic equation eq.~\eqref{eq:alg_eq_n_intro}
converges rapidly to a root of the polynomial. 
McClintock {\em did} make such a claim \cite{McClintock},
but in 1895 modern digital computers and the concomitant numerical algorithms did not exist.
Indeed, a power series will {\em not} converge rapidly close to its circle of convergence.
Nevertheless, an analytical expression can indicate properties of a function not evident from a purely numerical solution.
For example,
no alternative analytical expression is known, at the present time, 
for the probability mass function of the geometric distribution of order $k$ \cite{DM3}.

Finally, this paper is {\em not} intended to be an encyclopedia.
There is a vast literature on the solution of algebraic equations by infinite series,
as well as on combinatorics using Catalan and Fuss--Catalan numbers.
Any omissions are inadvertent and not deliberate.
For example, the text by 
Appell and Kamp{\'e} de F{\'e}riet
\cite{Appell_Kampe_de_Feriet}
derives solutions of algebraic equations using
generalized hypergeometric functions.
The general sextic equation can be solved using
Kamp{\'e} de F{\'e}riet functions.
It is beyond the scope of this paper to discuss such functions.
The paper by Kamber \cite{Kamber}
contains interesting material 
on the coefficients of certain inverse power series,
but is also beyond the scope of this paper.

%\vfill\pagebreak
\setcounter{equation}{0}
\section{\label{sec:defs} Basic definitions and theorems}
For ease of reference, some of the equations displayed in the introduction will be repeated below.
The Fuss--Catalan numbers are defined, for $\mu,r\in\mathbb{C}$ and $t\in\mathbb{N}$,
as $A_0(\mu,r) := 1$ and for $t\ge1$ via
\bq
\label{eq:fc_1def}
A_t(\mu,r) := \frac{r}{t!}\prod_{j=1}^{t-1}(t\mu+r-j) \,.
\eq
As stated in the introduction,
all of the applications in this paper will treat $\mu,r\in\mathbb{R}$.
The above numbers are also known as {\em Raney numbers},
at least when $\mu$ and $r$ are nonnegative integers, 
in which case $A_t(\mu,r)$ is itself a nonnegative integer.
Raney's work \cite{Raney} will be cited below.
The generating function of the Fuss--Catalan numbers is (where $z\in\mathbb{C}$)
\bq
\label{eq:b_gkp}
B_\mu(r;z) = \sum_{t=0}^\infty A_t(\mu,r)z^t \,.
\eq
The following results are known:
\begin{theorem}
\label{thm:thm_1}
\begin{enumerate}[(a)]
\item
The generating function $B_\mu(1;z)$ satisfies the following equation for $f(z)$ %
\bq
\label{eq:f_gkp}
f = 1 + zf^\mu \,.
\eq
\item
The generating function $B_\mu(r;z)$ also has the property 
\bq
\label{eq:pow_1}
B_\mu(1;z)^r = B_\mu(r;z) \,.
\eq
Let $s\in\mathbb{C}$ and 
using $B_\mu(1;z)^{r+s} = B_\mu(1;z)^r B_\mu(1;z)^s$, eq.~\eqref{eq:pow_1} is equivalent to the statement
\bq
\label{eq:pow_rs1}
B_\mu(r+s;z) = B_\mu(r;z) B_\mu(s;z) \,.
\eq
\item
The Fuss--Catalan numbers satisfy the following convolution identity 
\bq
\label{eq:fc_1conv}
A_t(\mu, s+r) = \sum_{u=0}^t A_u(\mu,r) A_{t-u}(\mu,s) \,.
\eq
\item
The Fuss--Catalan numbers satisfy the recurrence relation
(there are other equivalent ways to express the recurrence)
\bq
\label{eq:fc_1rec}
A_t(\mu, r+1) = A_t(\mu, r) + A_{t-1}(\mu, r+\mu) \,.
\eq
\end{enumerate}
\end{theorem}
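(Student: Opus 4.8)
The plan is to derive all four parts from a single tool, the Lagrange Inversion Theorem (LIT), together with elementary power-series algebra: establish the functional equation in part~(a) first, then exploit it for the remaining parts. Throughout I work in the ring of formal power series in $z$, where the generalized binomial series $(1+w)^\alpha=\sum_{k\ge0}\binom{\alpha}{k}w^k$ is a well-defined element for every $\alpha\in\mathbb{C}$, so no convergence or branch-cut issue enters the formal identities (analytic convergence is treated later in the paper). I use the LIT in the form: if $w=z\phi(w)$ with $\phi(0)\ne0$, then $[z^t]H(w)=\frac1t[w^{t-1}]\bigl(H'(w)\phi(w)^t\bigr)$ for every formal power series $H$ and every $t\ge1$.

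For part~(a), observe that $f=1+zf^\mu$ has a unique formal solution $g$ with $g(0)=1$, found by matching powers of $z$ (each coefficient $g_t$ is determined explicitly by $g_1,\dots,g_{t-1}$). The substitution $w=g-1$ recasts the equation as $w=z(1+w)^\mu$, matching the LIT hypothesis with $\phi(w)=(1+w)^\mu$. Taking $H(w)=1+w$ gives $[z^t]g=\frac1t[w^{t-1}](1+w)^{t\mu}=\frac1t\binom{t\mu}{t-1}$, and a short factorial manipulation identifies this with $\frac{1}{t!}\prod_{j=1}^{t-1}(t\mu+1-j)=A_t(\mu,1)$; hence $g=B_\mu(1;z)$. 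Part~(b) reuses the same substitution: since $1+w=B_\mu(1;z)$ solves $w=z(1+w)^\mu$, applying the LIT with $H(w)=(1+w)^r$ yields $[z^t]B_\mu(1;z)^r=\frac{r}{t}[w^{t-1}](1+w)^{t\mu+r-1}=\frac{r}{t}\binom{t\mu+r-1}{t-1}=A_t(\mu,r)$, while the $t=0$ terms are both $1$. This proves $B_\mu(1;z)^r=B_\mu(r;z)$, and the product form follows at once from $B_\mu(1;z)^{r+s}=B_\mu(1;z)^rB_\mu(1;z)^s$.

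Parts~(c) and~(d) are corollaries requiring no further inversion. For~(c) I compare the coefficient of $z^t$ on the two sides of $B_\mu(r+s;z)=B_\mu(r;z)B_\mu(s;z)$: the Cauchy product of the right-hand side is $\sum_{u=0}^t A_u(\mu,r)A_{t-u}(\mu,s)$, matching $A_t(\mu,r+s)$. For~(d) I combine both properties: part~(a) gives $B_\mu(1;z)=1+zB_\mu(1;z)^\mu=1+zB_\mu(\mu;z)$, so that $B_\mu(r+1;z)=B_\mu(r;z)B_\mu(1;z)=B_\mu(r;z)+zB_\mu(r+\mu;z)$, and reading off the coefficient of $z^t$ gives $A_t(\mu,r+1)=A_t(\mu,r)+A_{t-1}(\mu,r+\mu)$.

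The only genuinely substantive step is the Lagrange inversion in parts~(a)--(b); the rest is bookkeeping. The point that needs care is the use of the LIT with non-integer (indeed complex) exponents $\mu$ and $r$. I would stress that the computation is purely formal: $\phi(w)=(1+w)^\mu$ and $H(w)=(1+w)^r$ are bona fide formal power series via the generalized binomial theorem for any $\mu,r\in\mathbb{C}$, the LIT holds verbatim over the ring of formal power series, and $\phi(0)=1\ne0$ guarantees a unique formal inverse $w(z)$. Consequently the $0/0$ and branch ambiguities flagged after the Gamma-function representation~\eqref{eq:fc_def_gamma} never intrude on the coefficient extraction.
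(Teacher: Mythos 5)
Your proof is correct, and it is genuinely more self-contained than what the paper does. The paper states Theorem \ref{thm:thm_1} as a collection of known results: parts (a) and (b) are simply attributed to Graham, Knuth and Patashnik \cite{GrahamKnuthPatashnik} without proof, and the paper's own contribution is limited to showing that (b) and (c) are equivalent (by expanding the Cauchy product of $B_\mu(r;z)B_\mu(s;z)$ and equating coefficients of $z^t$) and that (d) is the coefficient-level form of eq.~\eqref{eq:f_gkp} multiplied through by $B_\mu(1;z)^r$. You instead derive (a) and (b) from scratch via formal Lagrange inversion applied to $w=z(1+w)^\mu$ with $H(w)=1+w$ and $H(w)=(1+w)^r$ respectively, and your coefficient extractions $\frac{1}{t}\binom{t\mu}{t-1}=A_t(\mu,1)$ and $\frac{r}{t}\binom{t\mu+r-1}{t-1}=A_t(\mu,r)$ check out against the defining product formula \eqref{eq:fc_1def}. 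Your deductions of (c) and (d) then coincide with the paper's. What your route buys is a single unified mechanism that handles arbitrary complex $\mu$ and $r$ without appeal to an external reference, and your explicit remark that the argument lives entirely in the ring of formal power series (where $(1+w)^\alpha$ is defined by the generalized binomial series) correctly disposes of the branch-cut and $0/0$ concerns that the paper raises around the Gamma-function representation; the cost is that you must invoke the Lagrange Inversion Theorem as a black box, whereas the paper's cited source builds the same identities from elementary convolution arguments on generalized binomial coefficients. The paper itself notes that Lagrange inversion is a standard alternative derivation, so your approach is consistent with the surrounding discussion.
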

Note that Theorems \ref{thm:thm_1}(b) and (c) are equivalent.
Write out eq.~\eqref{eq:pow_rs1} in full, then
\bq
\sum_{t=0}^\infty A_t(\mu,r+s)z^t = 
\biggl(\sum_{t^\prime=0}^\infty A_{t^\prime}(\mu,r)z^{t^\prime} \biggr)
\biggl(\sum_{t^\pprime=0}^\infty A_{t^\pprime}(\mu,s)z^{t^\pprime} \biggr) \,.
\eq
Selecting a particular value of $t$ on the left hand side and equating terms,
we obtain a sum of terms $t^\prime+t^\pprime = t$ on the right-hand side and eq.~\eqref{eq:fc_1conv} follows.
Reversing the steps proves the converse.
Also, using eqs.~\eqref{eq:b_gkp} and \eqref{eq:pow_1}, eq.~\eqref{eq:fc_1rec} can easily be employed to show that 
\bq
\label{eq:f_gkp_r}
B_\mu(1;z)^{r+1} = B_\mu(1;z)^r + zB_\mu(1;z)^{r+\mu} \,.
\eq
This is simply eq.~\eqref{eq:f_gkp} multiplied through by $B_\mu(1;z)^r$.

To generalize to $k\ge1$ multiple parameters,
we employ a vector notation and 
introduce the $k$-tuples
$\bm{t}=(t_1,\dots,t_k)\in\mathbb{N}^k$,
$\bm{\mu}=(\mu_1,\dots,\mu_k)\in\mathbb{C}^k$ and
$\bm{z}=(z_1,\dots,z_k)\in\mathbb{C}^k$
(and recall $r,s\in\mathbb{C}$).
Also define $|\bm{t}| = |t_1| +\cdots +|t_k|$.
For brevity we shall frequently write $t = |\bm{t}|$ below.
Also, for $|\bm{t}|>0$, define the `unit vector'
$\hat{\bm{t}} = \bm{t}/|\bm{t}|$.
We also define the zero vector $\bm{0} = (0,\dots,0)$ and the `basis vectors'
$\bm{e}_j = (0,\dots,0,1,0,\dots,0) = (\delta_{1j},\dots,\delta_{kj})$.
\begin{definition}[multiparameter Fuss--Catalan numbers]
We define the multiparameter Fuss--Catalan numbers $\mathscr{A}_{\bm{t}}(\bm{\mu},r)$ via
$\mathscr{A}_{\bm{0}}(\bm{\mu},r) := 1$ for $\bm{t}=\bm{0}$ and for $|\bm{t}|>0$ via
\bq
\label{eq:fc_kdef1}
\mathscr{A}_{\bm{t}}(\bm{\mu},r) := \frac{r}{t_1!\cdots t_k!}\prod_{j=1}^{|\bm{t}|-1}(\bm{t}\cdot\bm{\mu}+r-j) \,.
\eq
If $k=1$ this reduces to the single-parameter definition eq.~\eqref{eq:fc_1def}.
Equivalently, for all $|\bm{t}|\ge0$,
\bq
\label{eq:fc_kdef2}
\mathscr{A}_{\bm{t}}(\bm{\mu},r) = \binom{t}{t_1,\dots,t_k}\,A_t(\hat{\bm{t}}\cdot\bm{\mu},r) \,.
\eq
Note that a $0/0$ indeterminate expression for $\hat{\bm{t}}$ does not arise in eq.~\eqref{eq:fc_kdef2}
because of the definition $A_0(\cdot) :=1$.
\end{definition}
\begin{definition}[multiparameter generating function]
The multiparameter Fuss--Catalan generating function is defined as
\bq
\label{eq:fc_kgen1}
\mathcal{B}(\bm{\mu};r;\bm{z}) := \sum_{\bm{t}\in\mathbb{N}^k} \mathscr{A}_{\bm{t}}(\bm{\mu},r)\,z_1^{t_1}\cdots z_k^{t_k} \,.
\eq
Technically, the above expression is not well-defined because the answer can depend on the order of summation.
In all the applications in this paper, we collect the terms in level sets in $t=|\bm{t}|$
\bq
\label{eq:fc_kgen2}
\mathcal{B}(\bm{\mu};r;\bm{z}) = \sum_{t=0}^\infty \sum_{t_1+\cdots+t_k=t} \binom{t}{t_1,\dots,t_k} A_t(\hat{\bm{t}}\cdot\bm{\mu},r)
\,z_1^{t_1}\cdots z_k^{t_k} \,.
\eq
However, to obtain rigorous results, we must specify a domain of {\em absolute convergence}.
Then the answer will not depend on the order of summation.
The topic of absolute convergence will be discussed below.
\end{definition}
\begin{theorem}
\label{thm:thm_k}
\begin{enumerate}[(a)]
\item
The generating function $\mathcal{B}(\bm{\mu};1;\bm{z})$ satisfies the following equation for $f(\bm{z})$ 
\bq
\label{eq:f_mult}
f = 1 + z_1f^{\mu_1} +\cdots +z_kf^{\mu_k} \,.
\eq
\item
Analogous to eq.~\eqref{eq:pow_1}, the generating function $\mathcal{B}(\bm{\mu};r;\bm{z})$ has the property 
\bq
\label{eq:pow_k}
\mathcal{B}(\bm{\mu};1;\bm{z})^r = \mathcal{B}(\bm{\mu};r;\bm{z}) \,.
\eq
Analogous to eq.~\eqref{eq:pow_rs1}, it follows that
\bq
\label{eq:pow_rsk}
\mathcal{B}(\bm{\mu};r+s;\bm{z}) = 
\mathcal{B}(\bm{\mu};r;\bm{z}) 
\mathcal{B}(\bm{\mu};s;\bm{z}) \,.
\eq
\item
The multiparameter convolution identity analogous to eq.~\eqref{eq:fc_1conv} is
(the allowed values of $\bm{u}$ are obvious)
\bq
\label{eq:fc_kconv}
\mathscr{A}_{\bm{t}}(\bm{\mu},r+s) = 
\sum_{\bm{u}\in\mathbb{N}^k}
\mathscr{A}_{\bm{u}}(\bm{\mu},r) \mathscr{A}_{\bm{t}-\bm{u}}(\bm{\mu},s) \,.
\eq
\item
Analogous to eq.~\eqref{eq:fc_1rec}, the multiparameter recurrence is
(again, there are other equivalent ways to express the recurrence)
\bq
\label{eq:fc_krec}
\mathscr{A}_{\bm{t}}(\bm{\mu}, r+1) = \mathscr{A}_{\bm{t}}(\bm{\mu}, r) 
+\sum_{j=1}^k \mathscr{A}_{\bm{t}-\bm{e}_j}(\bm{\mu}, r+\mu_j) \,.
\eq
\end{enumerate}
\end{theorem}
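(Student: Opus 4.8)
The plan is to prove parts (a) and (b) together by a single application of the multivariate Lagrange inversion (Lagrange--Good) formula, and then to recover (c) and (d) as short corollaries. Throughout I would work at the level of formal power series in $\bm{z}$, so that the four assertions become coefficient-wise identities and the question of absolute convergence does not yet enter; the powers $f^{\mu_j}$ are read as the formal binomial series of $(1+(f-1))^{\mu_j}$, which is legitimate since $f(\bm{0})=1$. The first step is to linearise eq.~\eqref{eq:f_mult} by setting $w_j:=z_jf^{\mu_j}$, so that $f=1+w_1+\cdots+w_k$ and each $w_j$ obeys $w_j=z_j\,\phi_j(\bm{w})$ with $\phi_j(\bm{w}):=(1+w_1+\cdots+w_k)^{\mu_j}$. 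Since $\phi_j(\bm{0})=1\neq0$, this is exactly the Lagrange--Good setting, which for any formal series $\Psi$ gives
\[
[\bm{z}^{\bm{t}}]\,\Psi(\bm{w}(\bm{z}))
=[\bm{w}^{\bm{t}}]\Bigl\{\Psi(\bm{w})\,\Bigl(\prod_{j=1}^{k}\phi_j(\bm{w})^{t_j}\Bigr)\det\Bigl(\delta_{jl}-\frac{w_j}{\phi_j}\frac{\partial\phi_j}{\partial w_l}\Bigr)\Bigr\}.
\]
Taking $\Psi(\bm{w})=f^{r}=(1+\sigma)^{r}$ with $\sigma:=w_1+\cdots+w_k$, the product factor collapses to $\prod_j(1+\sigma)^{\mu_j t_j}=(1+\sigma)^{\bm{t}\cdot\bm{\mu}}$. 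Because $\partial\phi_j/\partial w_l=\mu_j(1+\sigma)^{\mu_j-1}$ is independent of $l$, the $(j,l)$ entry reduces to $\delta_{jl}-\mu_j w_j/(1+\sigma)$, i.e.\ a rank-one perturbation $I-\bm{c}\,\bm{1}^{\mathsf{T}}$ of the identity with $c_j=\mu_j w_j/(1+\sigma)$; the matrix--determinant lemma then yields $1-\bm{w}\cdot\bm{\mu}/(1+\sigma)$, so that
\[
[\bm{z}^{\bm{t}}]\,f^{r}=[\bm{w}^{\bm{t}}]\Bigl\{(1+\sigma)^{\,r+\bm{t}\cdot\bm{\mu}-1}\bigl(1+\sigma-\bm{w}\cdot\bm{\mu}\bigr)\Bigr\}.
\]

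The second step is the coefficient extraction. Writing $t=|\bm{t}|$ and $\alpha=r+\bm{t}\cdot\bm{\mu}$, and using $[\bm{w}^{\bm{t}}]\sigma^{m}=\binom{t}{t_1,\dots,t_k}$ when $m=t$ (and $0$ otherwise) together with $[\bm{w}^{\bm{t}-\bm{e}_l}]\sigma^{t-1}=\tfrac{t_l}{t}\binom{t}{t_1,\dots,t_k}$, the two terms combine into $\binom{t}{t_1,\dots,t_k}\bigl[\binom{\alpha}{t}-\tfrac{\bm{t}\cdot\bm{\mu}}{t}\binom{\alpha-1}{t-1}\bigr]$. Applying $\binom{\alpha}{t}=\tfrac{\alpha}{t}\binom{\alpha-1}{t-1}$ and the key cancellation $\alpha-\bm{t}\cdot\bm{\mu}=r$ collapses the bracket to $\tfrac{r}{t!}\prod_{i=1}^{t-1}(\bm{t}\cdot\bm{\mu}+r-i)$, whence $[\bm{z}^{\bm{t}}]f^{r}=\mathscr{A}_{\bm{t}}(\bm{\mu},r)$ by eq.~\eqref{eq:fc_kdef1}. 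Setting $r=1$ gives $f=\mathcal{B}(\bm{\mu};1;\bm{z})$, which is part (a), while general $r$ gives $\mathcal{B}(\bm{\mu};1;\bm{z})^{r}=\mathcal{B}(\bm{\mu};r;\bm{z})$, the first identity of (b); the second identity eq.~\eqref{eq:pow_rsk} is then immediate from $f^{r+s}=f^{r}f^{s}$.

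With (a) and (b) established, parts (c) and (d) are short. For (c), one expands both sides of eq.~\eqref{eq:pow_rsk} and equates the coefficient of $\bm{z}^{\bm{t}}$; the Cauchy product on the right is precisely $\sum_{\bm{u}}\mathscr{A}_{\bm{u}}(\bm{\mu},r)\mathscr{A}_{\bm{t}-\bm{u}}(\bm{\mu},s)$, exactly the multivariate analogue of the one-parameter equivalence of Theorems \ref{thm:thm_1}(b) and (c) noted after that theorem. For (d), multiply eq.~\eqref{eq:f_mult} through by $f^{r}=\mathcal{B}(\bm{\mu};r;\bm{z})$ to obtain
\[
\mathcal{B}(\bm{\mu};r+1;\bm{z})=\mathcal{B}(\bm{\mu};r;\bm{z})+\sum_{j=1}^{k}z_j\,\mathcal{B}(\bm{\mu};r+\mu_j;\bm{z}),
\]
and read off the coefficient of $\bm{z}^{\bm{t}}$, with the convention $\mathscr{A}_{\bm{t}-\bm{e}_j}:=0$ whenever $t_j=0$; this is eq.~\eqref{eq:fc_krec} (in which $A_{\bm{t}}$ is to be read as $\mathscr{A}_{\bm{t}}$).

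The main obstacle lies entirely in the first step: confirming that the Lagrange--Good Jacobian determinant collapses to the simple rank-one expression $1-\bm{w}\cdot\bm{\mu}/(1+\sigma)$, and then that the resulting two-term extraction recombines into the product defining $\mathscr{A}_{\bm{t}}(\bm{\mu},r)$ --- the cancellation $\alpha-\bm{t}\cdot\bm{\mu}=r$ is what produces both the leading factor $r$ and the truncated product $\prod_{i=1}^{t-1}$. A secondary point requiring care is the formal justification of Lagrange--Good with non-integer exponents $\mu_j$; this is unproblematic because each $\phi_j$ is a formal power series with nonzero constant term, so the inversion and the coefficient extraction are purely formal operations.
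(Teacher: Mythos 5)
Your argument is correct, and it takes a genuinely different route from the paper. The paper does not give a self-contained proof of Theorem \ref{thm:thm_k} at all: it establishes the results by assembling citations (Raney's combinatorial theorems for integer exponents, extended to complex parameters by the polynomial-identity reasoning of Graham et al., together with Mohanty's complex-parameter convolutions), and only proves internally the equivalence of parts (b) and (c) via the Cauchy product and the passage from (d) to the functional equation multiplied by $\mathcal{B}(\bm{\mu};1;\bm{z})^r$. The paper does record (quoting Chu) that eq.~\eqref{eq:f_mult} \emph{can} be derived from the multivariable Lagrange inversion formula of Good, but never carries this out. You do carry it out, and the computation checks: with $w_j=z_j f^{\mu_j}$ the kernel $\phi_j=(1+\sigma)^{\mu_j}$ has a rank-one Jacobian perturbation, the determinant collapses to $1-\bm{w}\cdot\bm{\mu}/(1+\sigma)$ by the matrix--determinant lemma, and the two-term coefficient extraction recombines via $\alpha-\bm{t}\cdot\bm{\mu}=r$ into exactly $\mathscr{A}_{\bm{t}}(\bm{\mu},r)$ of eq.~\eqref{eq:fc_kdef1}. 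What your approach buys is a single uniform formal-power-series derivation valid for arbitrary complex $\mu_j$ and $r$ from the outset, with (c) and (d) falling out as coefficient identities; what the paper's approach buys is precisely its stated goal of tracing and unifying the scattered literature rather than re-deriving it. Two small points of hygiene: the case $t=0$ must be set aside before using $\binom{\alpha}{t}=\frac{\alpha}{t}\binom{\alpha-1}{t-1}$ (it is the trivial normalization $\mathscr{A}_{\bm{0}}=1$), and it is worth stating explicitly that the system $w_j=z_j\phi_j(\bm{w})$ has a unique formal solution with $\bm{w}(\bm{0})=\bm{0}$, so that the $f$ you invert is the same $f$ asserted in part (a).
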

Theorems \ref{thm:thm_k}(b) and (c) are equivalent; the proof follows the same steps as for the case $k=1$.
Also, using eqs.~\eqref{eq:fc_kgen1} (or \eqref{eq:fc_kgen2}) and \eqref{eq:pow_k}, eq.~\eqref{eq:fc_krec} yields
\bq
\label{eq:f_mult_r}
\mathcal{B}(\bm{\mu};1;\bm{z})^{r+1} = 
\mathcal{B}(\bm{\mu};1;\bm{z})^r 
+\sum_{j=1}^k z_j \mathcal{B}(\bm{\mu};1;\bm{z})^{r+\mu_j} \,. 
\eq
This is eq.~\eqref{eq:f_mult} multiplied through by $\mathcal{B}(\bm{\mu};1;\bm{z})^r$.
A search of the literature revealed that all of the results in Theorem \ref{thm:thm_k} have already been proved.
Unfortunately, the proofs are scattered (and rediscovered) in the literature.
Unlike the single-parameter case, where the relations
are explicitly stated as properties of Fuss--Catalan numbers
(see Theorem \ref{thm:thm_1}),
for $k>1$ there is a variety of notations and not all authors mention Fuss and Catalan.
(This should {\em not} be misinterpreted as a criticism; see the comment at the beginning of \ref{app:app}.)
For the multiparameter case, the most comprehensive references I have found 
were by Raney \cite{Raney}, Chu \cite{Chu} and Mohanty \cite{Mohanty_1966}.
I summarize their works in turn.
Raney \cite{Raney} presented proofs of all the results in Theorem \ref{thm:thm_k}.
Raney's expression is as follows.
Let $a_1,a_2,\dots$ be an infinite sequence of natural numbers of which at most a finite number of terms are different from zero.
Then define
$m = n + \sum_{i=1}^\infty ia_i$
and
$a_0 = n + \sum_{i=1}^\infty (i-1)a_i$.
Raney defined the multinomial coefficient 
\bq
M(a_0,a_1,a_2,\dots) = \frac{(a_0+a_1+a_2+\cdots)!}{a_0!a_1!a_2!\cdots} \,.
\eq
Then \cite[Theorem 2.2]{Raney} states
\bq
mL(n;a_1,a_2,\dots) = nM(a_0,a_1,a_2,\dots) \,.
\eq
Let us reexpress this in our notation.
We know only finitely many of the $a_i$ are nonzero.
Suppose there are $k$ nonzero $a_i$ are they are indexed by the set $(\mu_1,\dots,\mu_k)$.
Also define $t_j = a_{\mu_j}$ and replace $n$ by $r$, then
$m = r + \bm{t}\cdot\bm{\mu}$ and $a_0 = r + \bm{t}\cdot\bm{\mu} - |\bm{t}| = m - |\bm{t}|$.
Then
\bq
\begin{split}
L(n;a_1,a_2,\dots) 
&= \frac{n}{m}\,\frac{(a_0+a_1+a_2+\cdots)!}{a_0!a_1!a_2!\cdots}
\\
&= \frac{r}{r + \bm{t}\cdot\bm{\mu}}\,
\frac{(r + \bm{t}\cdot\bm{\mu})!}
{(r + \bm{t}\cdot\bm{\mu} - |\bm{t}|)! t_1!\cdots t_k!}
\\
&= \frac{r}{t_1!\cdots t_k!}\,
\prod_{j=1}^{|\bm{t}|-1} (\bm{t}\cdot\bm{\mu}+r-j) 
\\
&= \mathscr{A}_{\bm{t}}(\bm{\mu},r) \,.
\end{split}
\eq
Notice that Raney's expression for $L$ is a {\em solution}, not a {\em definition}.
Raney posed and solved many combinatorial problems in \cite{Raney}.
Then \cite[Theorems 2.3, 2,4, 4.1]{Raney} 
yield respectively eqs.~\eqref{eq:fc_kconv}, \eqref{eq:fc_krec} and \eqref{eq:pow_k}.
Also \cite[eqs.~(6.1) and (6.2)]{Raney} yield eq.~\eqref{eq:f_mult}.
Note that Raney \cite{Raney} took the $\mu_j$ (in my notation) to be integers;
this is common also in the derivations by other authors (see below).
However, it is straightforward to generalize from integer to complex-valued parameters.
The relevant steps are given by Graham et al.~\cite{GrahamKnuthPatashnik}
(for the single-parameter case $k=1$, but the same reasoning works also for multiple parameters $k>1$).
Chu \cite{Chu} also published a proof of the solution of eq.~\eqref{eq:f_mult} (citing Raney \cite{Raney}).
Chu remarked that eq.~\eqref{eq:f_mult} can also be derived using 
the multi-variable version of the Lagrange inversion formula \cite{Good}.
(Numerous authors have stated that eq.~\eqref{eq:f_mult} can be derived using Lagrange inversion.
Raney gave an example of Lagrange inversion in \cite{Raney}.)
Chu treated only integer-valued parameters.
Chu defined `higher Catalan numbers' and `generalized Catalan numbers' as follows.
Chu employed vectors $\vec{v}$ and $\bm{n}$, which are $k$-tuples of integers.
The `higher Catalan numbers' are \cite[eq.~(1)]{Chu}
\bq
\label{eq:chu_1}
C_k(n) = \frac{1}{nk+1}\binom{nk+1}{n} \,.
\eq
This is equivalent to $A_n(k,1)$ in eq.~\eqref{eq:fc_1def}
The `generalized Catalan numbers' are \cite[eq.~(2)]{Chu}
\bq
\label{eq:chu_2}
C_{\vec{v}}(\vec{n}) = \frac{1}{\sum_{i=1}^k n_iv_i+1}\binom{\sum_{i=1}^k n_iv_i+1}{n_1,n_2,\dots,n_k,1+\sum_{i=1}^k n_i(v_i-1)} \,.
\eq
This is equivalent to $\mathscr{A}_{\bm{n}}(\bm{v},1)$ in eq.~\eqref{eq:fc_kdef1}.
Beware of the slightly inconsistent use of $k$ by Chu,
as quoted in eqs.~\eqref{eq:chu_1} and \eqref{eq:chu_2}.
Curiously, Chu \cite{Chu} restricted his definitions only to $r=1$,
even though unnamed expressions with $r>1$ appear in his paper (Chu wrote $t$ for what I call $r$).
Mohanty \cite{Mohanty_1966} explicitly treated complex-valued parameters.
He defined the multinomial coefficient as follows \cite[eq.~(3)]{Mohanty_1966}
\bq
\binom{x}{j_1,\dots,j_k} = \frac{x(x-1)\cdots(x-\sum_{j=1}^k j +1)}{\prod_{i=1}^k j_i!} \,.
\eq
Then Mohanty defined (without assigning a name) \cite[eq.~(4)]{Mohanty_1966}
\bq
\label{eq:fc_mohanty}
A(a;b_1,\dots,b_k;n_1,\dots,n_k) = \frac{a}{a+\sum_{i=1}^k b_in_i}\binom{a+\sum_{i=1}^k b_in_i}{n_1,\dots,n_k} \,.
\eq
Here $a,b_1,\dots,b_k \in\mathbb{C}$ are all complex.
This is equivalent to $\mathscr{A}_{\bm{n}}(\bm{b},a)$ in eq.~\eqref{eq:fc_kdef1}.
Mohanty proved several multiparameter convolution identities in \cite{Mohanty_1966},
in particular eq.~\eqref{eq:fc_kconv}.
(Additional results are given in \ref{app:app} below.)
Mohanty defined a generating function \cite[unnumbered before eq.~(13)]{Mohanty_1966} 
and proved that it satisfies the following equation for $z$ \cite[eq.~(22)]{Mohanty_1966} 
\bq
\label{eq:mohanty_eq22}
sz^b +tz^d -z +1 = 0 \,.
\eq
Here all of $b$, $d$, $s$ and $t$ are complex.
Note that Mohanty displayed explicit derivations for the case $k=2$ 
and pointed out that the extension to more parameters merely requires additional bookkeeping,
hence eq.~\eqref{eq:mohanty_eq22} generalizes to $k>2$ parameters and is effectively eq.~\eqref{eq:f_mult}.
Similarly \cite[eq.~(25)]{Mohanty_1966} yields the identity eq.~\eqref{eq:pow_rsk}.
Strehl \cite{Strehl} also gave a proof of the solution of eq.~\eqref{eq:f_mult},
where \cite[eq.(21)]{Strehl} is an algebraic equation with complex coefficients.
In fact \cite[eq.(21)]{Strehl} is the equation solved by Mellin \cite{Mellin}
and displayed in eq.~\eqref{eq:poly_mellin_nach} below.
Strehl also provides some historical background, citing both Chu \cite{Chu} and Raney \cite{Raney}.
More recently, eq.~\eqref{eq:f_mult} was solved by Schuetz and Whieldon \cite[Theorem 4.2]{Schuetz_Whieldon},
who treated integer valued coefficients and exponents only.
The series coefficients were identified as Fuss--Catalan numbers,
multiplied by multinomial coefficients (see eq.~\eqref{eq:fc_kdef2});
this is the only reference I have found to mention Fuss--Catalan explicitly for the multiparameter case
(but see Chu \cite{Chu} above).
Banderier and Drmota \cite{Banderier_Drmota} 
also derived a series solution for an algebraic equation
where \cite[Theorem 3.3]{Banderier_Drmota} is termed the 
`Flajolet-Soria formula for coefficients of an algebraic function.'
See \cite[eq.~(3.3)]{Banderier_Drmota}.

\begin{remark}
The exponents $\mu_j$ in eq.~\eqref{eq:f_mult} need not be distinct, 
although from a practical viewpoint it may be pointless if they are not.
Consider the extreme case where they are all equal $\mu_1=\cdots=\mu_k=\mu$ so $\hat{\bm{t}}\cdot\bm{\mu}=\mu$. 
Then eq.~\eqref{eq:f_mult} simplifies to
\bq
\label{eq:f_all_equal}
f = 1 + (z_1+\cdots+z_k)f^\mu \,.
\eq
This is simply eq.~\eqref{eq:f_gkp} with $z = \sum_{j=1}^k z_j$.
Then in eq.~\eqref{eq:f_mult}, $A_t(\mu,1)$ does not depend on the individual $t_j$ so
\bq
\begin{split}
f &= \sum_{t=0}^\infty A_t(\mu,1) \biggl[ \sum_{t_1+\cdots+t_k=t} \binom{t}{t_1,\dots,t_k}  z_1^{t_1}\cdots z_k^{t_k} \biggr]
\\
&= \sum_{t=0}^\infty A_t(\mu,1) (z_1+\cdots+z_k)^t \,.
\end{split}
\eq
This is precisely the Fuss--Catalan series which is the known solution of eq.~\eqref{eq:f_all_equal}.
\end{remark}

\begin{remark}[branch cuts]
If some of the $\mu_j$ in eq.~\eqref{eq:f_mult} are nonintegers, a branch cut is required in the complex plane.
The classic example is the square root $\mu_j=\frac12$ and $f^{1/2}$.
A specific sheet of the complex plane must be selected, to render equations such as eq.~\eqref{eq:f_mult} well defined
(although, as pointed out above, the {\em domain} of absolute convergence will not depend on branch cuts).
In all of the numerical work reported in this paper, the branch cut was placed along the positive real axis,
so $0 \le \arg(z_j) < 2\pi$ for $j=1,\dots,k$ and similarly for $f$ and all other complex variables to appear below.
This is necessary to obtain meaningful sums for the various series in this paper.
Mellin \cite{Mellin} placed the branch cut along the negative real axis.
The essential fact is that a branch cut is required; one must make a specific choice and adhere to it consistently.
\end{remark}

Up now now, we have treated only powers of $r$, i.e.~$\mathcal{B}(\bm{\mu};r,\bm{z})=\mathcal{B}(\bm{\mu};1,\bm{z})^r$.
Let us briefly treat symmetries involving the tuple $\bm{\mu}$.
\begin{theorem}
\label{thm:thm_mu}
Fix $k\ge1$ and let $\bm{\mu}=(\mu_1,\dots,\mu_k)$ and $\bm{z}=(z_1,\dots,z_k)$.
Define the tuple $\bm{1} = (1,\dots,1)$.
The sum of two tuples and the multiplication of a tuple by a scalar are defined in the obvious way.
Hence $\bm{1}-\bm{\mu} = (1-\mu_1,\dots,1-\mu_k)$ and $-\bm{z}=(-z_1,\dots,-z_k)$.
Then the generating function $\mathcal{B}(\bm{\mu};r;\bm{z})$ satisfies the relation
\bq
\label{eq:b_symm_mu}
\mathcal{B}(\bm{\mu}; r; \bm{z}) = \mathcal{B}(\bm{1}-\bm{\mu}; -r; -\bm{z}) \,.
\eq
\end{theorem}
\begin{proof}
The proof consists of fixing a tuple $\bm{t}$ and evaluating the finite products in 
$\mathscr{A}_{\bm{t}}(\bm{\mu},r)$ and $\mathscr{A}_{\bm{t}}(\bm{1}-\bm{\mu},-r)$ and keeping track of the minus signs.
\end{proof}
\begin{remark}\label{rem:b_symm_mu_scalar}
In the scalar case, eq.~\eqref{eq:b_symm_mu} simplifies to
\bq
\label{eq:b_symm_mu_scalar}
\mathcal{B}(\mu;r;z) = \mathcal{B}(1-\mu;-r;-z) \,.
\eq
Combined with the result $\mathcal{B}(\mu;r;z) = \mathcal{B}(\mu;1;z)^r$,
this means that in the scalar case, we need only compute $\mathcal{B}(\mu;1;z)$ for $\mu\ge0$.
\end{remark}

%\vfill\pagebreak
\setcounter{equation}{0}
\section{\label{sec:conv1} Domain of convergence}
In general, the convergence of an infinite series depends on the order of summation.
In this paper, we take `convergence' to mean exclusively {\em absolute convergence}.
In that case, the answer does not depend on the order of summation.
In general, the series in eq.~\eqref{eq:fc_kgen2} has a finite domain of absolute convergence.
We present two sets of conditions for the series in eq.~\eqref{eq:fc_kgen2} to converge absolutely.
The first is necessary, but in general not sufficient,
and the second is sufficient, but in general not necessary.
A more detailed analysis for the special case of algebraic equations will be presented in 
Secs.~\ref{sec:conv2} and \ref{sec:conv3}.
The derivations below assume the $\mu_j$ are real and are ordered $\mu_1 \le \mu_2 \le \cdots \le \mu_k$.
We begin with the following lemma for the asymptotic value of the Fuss--Catalan numbers.
\begin{lemma}
Asymptotically for $t\gg1$ and real $\mu,r$, 
\bq
\label{eq:fc_asymp}
A_t(\mu,r) \sim \frac{r}{\sqrt{2\pi}\,t^{3/2}}\, 
\frac{|\mu|^{r-\frac12}}{|1-\mu|^{r+\frac12}} \,
(|\mu|^\mu |1-\mu|^{1-\mu})^t \,.
\eq
The above is an application of Stirling's formula and the proof is omitted.
We require $\mu\ne0$ and $\mu \ne 1$ to justify the intermediate steps in the derivation.
To determine the radius of convergence using d'Alembert's ratio test, note that asymptotically
\bq
\label{eq:fc_ratio_test}
\frac{A_t(\mu,r)}{A_{t-1}(\mu,r)} \sim |\mu|^\mu |1-\mu|^{1-\mu} \,.
\eq
\end{lemma}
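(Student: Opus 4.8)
The plan is to reduce everything to Stirling's formula applied to a ratio of three Gamma functions. First I would rewrite the defining product in eq.~\eqref{eq:fc_1def} as a falling factorial and convert it to a ratio of Gamma functions,
\bq
A_t(\mu,r) = \frac{r\,\Gamma(t\mu+r)}{\Gamma(t+1)\,\Gamma(t(\mu-1)+r+1)} \,,
\eq
valid whenever no argument is a nonpositive integer. Taking logarithms and inserting $\ln\Gamma(w) = (w-\tfrac12)\ln w - w + \tfrac12\ln(2\pi) + O(1/w)$ reduces the problem to bookkeeping of three expansions as $t\to\infty$, using $\ln(t\mu+r) = \ln t + \ln\mu + O(1/t)$ and the analogous expansions of the other two arguments.

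I would then collect the resulting terms by type. The coefficient of $t\ln t$ is $\mu - 1 - (\mu-1) = 0$, so the factorial-scale growth cancels and only exponential growth survives; this is the crucial first check. The coefficient of $t$ (the terms carrying $\ln\mu$ and $\ln(\mu-1)$) is $\mu\ln\mu - (\mu-1)\ln(\mu-1)$, which exponentiates to the base $\mu^\mu(\mu-1)^{1-\mu}$. The coefficient of $\ln t$ is $(r-\tfrac12) - \tfrac12 - (r+\tfrac12) = -\tfrac32$, giving the $t^{-3/2}$ factor, while the surviving constant is $\ln r + (r-\tfrac12)\ln\mu - (r+\tfrac12)\ln(\mu-1) - \tfrac12\ln(2\pi)$. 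I would also verify that the pure-number (non-logarithmic) constants cancel, $r - 1 - (r+1) + 2 = 0$, which is what removes any stray power of $e$ and matches the $e$-free right-hand side of eq.~\eqref{eq:fc_asymp}. Exponentiating reproduces the claimed formula in the range where all three Gamma arguments tend to $+\infty$, namely $\mu>1$.

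The hard part will be the remaining parameter ranges. For $0<\mu<1$ the argument $t(\mu-1)+r+1 \to -\infty$, and for $\mu<0$ the argument $t\mu+r$ also tends to $-\infty$, so Stirling's formula cannot be applied directly to those factors. Here I would use the reflection formula $\Gamma(w)\Gamma(1-w) = \pi/\sin(\pi w)$ to replace each Gamma of large negative argument by one of large positive argument; this is precisely the mechanism that converts $\mu$ and $1-\mu$ into their absolute values $|\mu|$ and $|1-\mu|$ in the final statement. The accompanying $\sin(\pi w)$ factors are oscillatory and of modulus $O(1)$, so they affect only the sign or phase of $A_t(\mu,r)$ and not its growth rate; since the ratio test and the radius of convergence depend only on magnitudes, eq.~\eqref{eq:fc_ratio_test} then follows at once, either from eq.~\eqref{eq:fc_asymp} via $(t/(t-1))^{3/2}\to1$ or, more directly, by forming $A_t/A_{t-1}$ from the Gamma representation and applying $\Gamma(w+a)/\Gamma(w)\sim w^a$ so that the surviving powers of $t$ cancel. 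Finally I would note that $\mu=0$ and $\mu=1$ must be excluded because at these values one of the two nontrivial Gamma arguments is constant in $t$ (equivalently $|\mu|^{\,\cdot}$ or $|1-\mu|^{\,\cdot}$ degenerates), so the intermediate asymptotic steps break down.
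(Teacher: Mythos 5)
Your proposal is correct and follows exactly the route the paper indicates: it rewrites $A_t(\mu,r)$ as the Gamma-function ratio $r\,\Gamma(t\mu+r)/[\Gamma(t+1)\Gamma(t(\mu-1)+r+1)]$ (eq.~\eqref{eq:fc_def_gamma}) and applies Stirling's formula, and your term-by-term bookkeeping (cancellation of the $t\ln t$ and constant terms, the $-\tfrac32$ coefficient of $\ln t$, and the base $\mu^\mu(\mu-1)^{1-\mu}$) is accurate. Your use of the reflection formula to handle the ranges $0<\mu<1$ and $\mu<0$, where a Gamma argument tends to $-\infty$, is precisely the step behind the paper's caveat that $\mu\ne0$ and $\mu\ne1$ are needed to justify the intermediate steps, and your observation that the resulting $\sin(\pi w)$ factors affect only sign and phase correctly matches the paper's implicit convention that eq.~\eqref{eq:fc_asymp} and the ratio test are statements about magnitudes.
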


\begin{proposition}[necessary, not sufficient]
\label{prop:nec1}
For the series in eq.~\eqref{eq:fc_kgen2} to converge absolutely, it is necessary that
\bq
\label{eq:conv_nec_j}
|z_j| \le |z_j|_{\max} \equiv \frac{1}{|\mu_j|^{\mu_j} |1-\mu_j|^{1-\mu_j}} \qquad (j=1,\dots,k) \,.
\eq
Then all points of the following form lie in the domain of convergence
\bq
\label{eq:conv_nec_j_vertex}
\tilde{\bm{z}}_j = (0,\dots,0,|z_j| = |z_j|_{\max}, 0,\dots,0) \qquad (j=1,\dots,k) \,.
\eq
\end{proposition}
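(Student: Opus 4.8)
The plan is to reduce both assertions to the single-parameter case by restricting the multivariate series to one coordinate axis at a time, so that the Lemma can be applied directly. For the necessary condition, suppose $\bm{z}$ lies in the domain of absolute convergence, meaning
\bq
\sum_{\bm{t}\in\mathbb{N}^k} \left| \mathscr{A}_{\bm{t}}(\bm{\mu},r) \right|\,|z_1|^{t_1}\cdots|z_k|^{t_k} < \infty \,.
\eq
Because every summand is nonnegative, each subseries converges as well. First I would extract the subseries indexed by the multi-indices $\bm{t}=t_j\bm{e}_j$, for a fixed $j$. For these indices the multinomial coefficient in eq.~\eqref{eq:fc_kdef2} equals $1$ and $\hat{\bm{t}}\cdot\bm{\mu}=\mu_j$, so $\mathscr{A}_{t_j\bm{e}_j}(\bm{\mu},r)=A_{t_j}(\mu_j,r)$ and the surviving sum is exactly the single-parameter series $B_{\mu_j}(r;z_j)$ of eq.~\eqref{eq:b_gkp}. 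Hence $\sum_{t_j\ge0}|A_{t_j}(\mu_j,r)|\,|z_j|^{t_j}<\infty$.

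Next I would invoke d'Alembert's ratio test through eq.~\eqref{eq:fc_ratio_test}: the ratio $|A_t(\mu_j,r)/A_{t-1}(\mu_j,r)|$ tends to $|\mu_j|^{\mu_j}|1-\mu_j|^{1-\mu_j}$, so the radius of convergence of $B_{\mu_j}(r;z_j)$ is precisely $|z_j|_{\max}$ as defined in eq.~\eqref{eq:conv_nec_j}. Absolute convergence of the single-parameter subseries therefore forces $|z_j|\le|z_j|_{\max}$, and since $j$ was arbitrary this yields eq.~\eqref{eq:conv_nec_j}. Throughout I would assume $\mu_j\ne0$ and $\mu_j\ne1$, as the Lemma requires; the two degenerate values can be treated separately or recovered by continuity.

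For the second assertion, evaluating the series at the point $\tilde{\bm{z}}_j$ of eq.~\eqref{eq:conv_nec_j_vertex} again annihilates every term except the single-parameter subseries in $z_j$, so it suffices to show that $B_{\mu_j}(r;z_j)$ converges absolutely when $|z_j|=|z_j|_{\max}$. This is the boundary of the disc of convergence, where the ratio test gives no verdict; here the sharper estimate eq.~\eqref{eq:fc_asymp} is essential. Setting $|z_j|^t=|z_j|_{\max}^t$ cancels the geometric growth factor $(|\mu_j|^{\mu_j}|1-\mu_j|^{1-\mu_j})^t$ exactly, leaving $|A_t(\mu_j,r)|\,|z_j|_{\max}^t \sim C\,t^{-3/2}$ for some constant $C>0$. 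Since $\sum_{t\ge1}t^{-3/2}$ converges, the boundary series converges absolutely and $\tilde{\bm{z}}_j$ indeed lies in the domain of convergence.

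The step I expect to be the main obstacle is precisely this boundary analysis. The ratio test in eq.~\eqref{eq:fc_ratio_test} locates the circle $|z_j|=|z_j|_{\max}$ but says nothing about convergence on it, and a crude exponential bound would be inconclusive there. The decisive ingredient is the subexponential prefactor $t^{-3/2}$ in the full Stirling asymptotic eq.~\eqref{eq:fc_asymp}: being borderline summable, it is exactly what places the vertices $\tilde{\bm{z}}_j$ inside, rather than outside, the domain of absolute convergence. The remaining steps are routine, resting only on the elementary fact that a subseries of a convergent series of nonnegative terms converges.
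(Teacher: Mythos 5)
Your proposal is correct and follows essentially the same route as the paper: reduce to the single-variable subseries along each coordinate axis, apply d'Alembert's ratio test via eq.~\eqref{eq:fc_ratio_test} to locate the radius, and use the $t^{-3/2}$ prefactor in the Stirling asymptotic eq.~\eqref{eq:fc_asymp} to get convergence on the boundary circle, which places the vertex points $\tilde{\bm{z}}_j$ inside the domain. Your explicit subseries-of-a-nonnegative-series argument is a slightly more careful justification of why the condition is necessary at points with all coordinates nonzero (the paper simply sets the other $z_{j'}$ to zero), but it is the same proof in substance.
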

\begin{proof}
Fix a value of $j$ and set all the other $z_{j^\prime}$ to zero, where $j^\prime \ne j$.
Then the series in eq.~\eqref{eq:fc_kgen2} reduces to a sum in powers of single variable $z_j$.
Then eq.~\eqref{eq:conv_nec_j} follows from eq.~\eqref{eq:fc_ratio_test}
and d'Alembert's ratio test.
From the asymptotic form of the Fuss--Catalan numbers in eq.~\eqref{eq:fc_asymp},
the series converges also on its circle of convergence, justifying the `$\le$' in 
eq.~\eqref{eq:conv_nec_j}.
Then eq.~\eqref{eq:conv_nec_j_vertex} follows immediately.
\end{proof}

\begin{proposition}[sufficient, not necessary]
\label{prop:suff1}
The series in eq.~\eqref{eq:fc_kgen2} converges absolutely if 
\bq
\label{eq:radconv}
\sum_{j=1}^k |z_j| \le \min\biggl( \frac{1}{|\mu_1|^{\mu_1} |1-\mu_1|^{1-\mu_1}} \,,
\frac{1}{|\mu_k|^{\mu_k} |1-\mu_k|^{1-\mu_k}} \biggr) \,.
\eq
The above condition is sufficient, but in general not necessary.
\end{proposition}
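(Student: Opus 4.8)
The plan is to dominate the multivariate series in eq.~\eqref{eq:fc_kgen2} by a single-variable power series in the aggregated radius $Z := \sum_{j=1}^k |z_j|$, and then read off its radius of convergence from the one-parameter theory. First I would pass to absolute values termwise and, for each fixed total degree $t=|\bm{t}|$, bound the Fuss--Catalan factor uniformly over the simplex, $|A_t(\hat{\bm{t}}\cdot\bm{\mu},r)| \le M_t$ with $M_t := \max_{|\bm{t}|=t}|A_t(\hat{\bm{t}}\cdot\bm{\mu},r)|$. The merit of this crude step is that $M_t$ no longer depends on the partition $(t_1,\dots,t_k)$, so the inner sum collapses by the multinomial theorem:
\bq
\sum_{t_1+\cdots+t_k=t}\binom{t}{t_1,\dots,t_k}|A_t(\hat{\bm{t}}\cdot\bm{\mu},r)|\,|z_1|^{t_1}\cdots|z_k|^{t_k} \le M_t\,Z^t \,.
\eq
The whole series is then dominated by the genuinely one-dimensional series $\sum_{t\ge0} M_t Z^t$, whose radius of convergence controls everything.

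The crux is to identify the exponential growth rate of $M_t$. Since $\hat{\bm{t}}$ ranges over the probability simplex, $\mu' := \hat{\bm{t}}\cdot\bm{\mu}$ is a convex combination of $\mu_1,\dots,\mu_k$ and hence lies in $[\mu_1,\mu_k]$; moreover the extremes $\mu'=\mu_1$ and $\mu'=\mu_k$ are attained exactly, by $\bm{t}=t\bm{e}_1$ and $\bm{t}=t\bm{e}_k$. By the asymptotic Lemma, eq.~\eqref{eq:fc_asymp}, the growth of $|A_t(\mu',r)|$ is governed by $g(\mu):=|\mu|^\mu|1-\mu|^{1-\mu}$, i.e.~$\limsup_t|A_t(\mu',r)|^{1/t}=g(\mu')$, so I must evaluate $\sup_{\mu'\in[\mu_1,\mu_k]} g(\mu')$. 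Here I would establish the key geometric fact that $g$ is quasi-convex: from $\log g(\mu)=\mu\log|\mu|+(1-\mu)\log|1-\mu|$ one gets $(\log g)'(\mu)=\log|\mu/(1-\mu)|$, which vanishes only at $\mu=\half$, is negative for $\mu<\half$ and positive for $\mu>\half$. Thus $g$ is strictly decreasing on $(-\infty,\half]$ and strictly increasing on $[\half,\infty)$ (continuous through $\mu=0,1$, where $g=1$), so on any interval its maximum sits at an endpoint:
\bq
\sup_{\mu'\in[\mu_1,\mu_k]} g(\mu') = \max\bigl(g(\mu_1),g(\mu_k)\bigr) =: G \,.
\eq
This $G$ is precisely the reciprocal of the right-hand side of eq.~\eqref{eq:radconv}.

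With $\limsup_t M_t^{1/t}=G$, the Cauchy--Hadamard root test gives absolute convergence of $\sum_t M_t Z^t$, and hence of the original series, for every $Z<1/G$, which is the strict form of eq.~\eqref{eq:radconv}. To recover the boundary sphere $Z=1/G$ (the `$\le$' in the statement) I would refine the estimate of $M_t$ using the full prefactor of eq.~\eqref{eq:fc_asymp}: the dominant contribution to $M_t$ comes from $\hat{\bm{t}}\cdot\bm{\mu}$ near the maximizing endpoint, where $g=G$ and the prefactor $|\mu'|^{r-\frac12}/|1-\mu'|^{r+\frac12}$ stays bounded, so that $M_t\sim C\,t^{-3/2}G^t$ and $\sum_t M_t G^{-t}\sim\sum_t t^{-3/2}<\infty$.

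The main obstacle is exactly this boundary step, because the asymptotic in eq.~\eqref{eq:fc_asymp} is stated for \emph{fixed} $\mu$, whereas here $\mu'=\hat{\bm{t}}\cdot\bm{\mu}$ moves with $t$ along the grid $\{\bm{t}\cdot\bm{\mu}/t\}$ and its prefactor degenerates as $\mu'\to1$. I would need to show this degeneration is harmless: since $g(1)=1$ is exponentially smaller than $G^t$ whenever an endpoint lies outside $[0,1]$, the polynomially large near-singular terms cannot dominate the exponentially larger endpoint terms. Controlling this interplay between the moving argument and the moving prefactor uniformly in $t$ is the delicate point; for rigor a direct product estimate on $|A_t(\mu',r)|=\frac{|r|}{t!}\prod_{j=1}^{t-1}|t\mu'+r-j|$ may be cleaner than appealing to the fixed-$\mu$ Stirling asymptotic. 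The claimed non-necessity is then unsurprising, since replacing the individual single-variable bounds of Proposition~\ref{prop:nec1} by the uniform worst case $G$ is genuinely lossy along most directions.
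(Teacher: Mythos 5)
Your proposal is correct and follows essentially the same route as the paper: aggregate the amplitudes into $Z=\sum_j|z_j|$, majorize $|A_t(\hat{\bm{t}}\cdot\bm{\mu},r)|$ uniformly over the simplex by the worst-case argument $\mu_*\in\{\mu_1,\mu_k\}$ (using the monotone decrease/increase of $|\mu|^{\mu}|1-\mu|^{1-\mu}$ on either side of $\mu=\frac12$), collapse the inner sum by the multinomial theorem, and apply a one-variable convergence test to $\sum_t M_t Z^t$. Your treatment is in fact somewhat more scrupulous than the paper's (root test in place of the ratio test, and explicit attention to the boundary $Z=1/G$ and to the uniformity of the asymptotic as $\hat{\bm{t}}\cdot\bm{\mu}$ varies with $t$, which the paper passes over with a brief "asymptotically, say for $t\ge T$" remark), but the underlying argument is the same.
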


\begin{proof}
We employ eq.~\eqref{eq:fc_kgen2} and eq.~\eqref{eq:f_mult}.
Let us define $\alpha = \sum_{j=1}^k |z_j|$ and $p_j = |z_j|/\alpha$, for $j=1,\dots,k$.
Then $0 \le p_j \le 1$ and $\sum_{j=1}^k p_j = 1$.
Then from eq.~\eqref{eq:f_mult}
\bq
|f| \le \sum_{t=0}^\infty \alpha^t \biggl\{ \sum_{t_1+\cdots+t_k=t} 
|A_t(\hat{\bm{t}}\cdot\bm{\mu},1)| \binom{t}{t_1,\dots,t_k} p_1^{t_1}\cdots p_k^{t_k} \biggr\} \,.
\eq
Let us suppose that $|A_t(\hat{\bm{t}}\cdot\bm{\mu},1)|$ is majorized by setting $\hat{\bm{t}}\cdot\bm{\mu}=\mu_*$,
where $\mu_*$ does not depend on the $t_j$.
(This will be discussed in more detail below.)
Actually, to establish convergence of the series, 
it is sufficient if $|A_t(\hat{\bm{t}}\cdot\bm{\mu},1)| < |A_t(\mu_*,1)|$ only asymptotically, say for $t \ge T$.
Then
\bq
\begin{split}
|f| &\le \textrm{const}
+\sum_{t=T}^\infty |A_t(\mu_*,1)| \,\alpha^t
\biggl\{ \sum_{t_1+\cdots+t_k=t} \binom{t}{t_1,\dots,t_k} p_1^{t_1}\dots p_k^{t_k} \biggr\}
\\
&= \textrm{const} + \sum_{t=T}^\infty |A_t(\mu_*,1)| \,\alpha^t \,.
\end{split}
\eq
Using eq.~\eqref{eq:fc_ratio_test} and d'Alembert's ratio test,
we obtain the following sufficient (but not always necessary) condition for convergence:
\bq
\label{eq:radconv_mu*}
\sum_{j=1}^k |z_j| \le \frac{1}{|\mu_*|^{\mu_*} |1-\mu_*|^{1-\mu_*}} \,.
\eq
The essential step to complete the proof is to specify the value of $\mu_*$.
Since the $\mu_j$ are ordered, $\mu_1 \le \hat{\bm{t}}\cdot\bm{\mu} \le \mu_k$.
Now the graph of $|\mu_*|^{\mu_*} |1-\mu_*|^{1-\mu_*}$ 
attains a minimum at $\mu_*=\frac12$ (and is symmetric around $\mu_*=\frac12$) 
and increases monotonically in either direction away from the minimum.
Hence the value of $|\mu_*|^{\mu_*} |1-\mu_*|^{1-\mu_*}$
is maximized by setting $\mu_* = \mu_1$ or $\mu_*=\mu_k$.
Either value will do if they are equidistant from $\frac12$.
This proves eq.~\eqref{eq:radconv}.
Admittedly, this may not be an {\em optimal} criterion: it is sufficient, but may not be necessary.
Numerical tests indicate that the domain of convergence using the above value of $\mu_*$ can be very conservative.
\end{proof}

\begin{corollary}[trinomial]
\label{eq:cor_k1}
For the special case $k=1$, there is only one summand, and so $\mu_*=\mu_1=\mu$ and we may write $z_1=z$.
Then the criterion for absolute convergence is necessary and sufficient.
The series in eq.~\eqref{eq:b_gkp} converges if and only if 
\bq
\label{eq:radconv1}
|z| \le \frac{1}{|\mu|^\mu |1-\mu|^{1-\mu}} \,.
\eq
The proof is immediate from taking Propositions \ref{prop:nec1} and \ref{prop:suff1} together.
From Proposition \ref{prop:nec1}, the series converges everywhere on its circle of convergence.
This corollary will be important below.
\end{corollary}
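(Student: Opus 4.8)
The plan is to obtain the corollary as the exact coincidence, in the degenerate case $k=1$, of the two bounds established in Propositions~\ref{prop:nec1} and \ref{prop:suff1}, so that the gap between ``necessary'' and ``sufficient'' collapses to a single threshold. First I would observe that when $k=1$ the sum in eq.~\eqref{eq:fc_kgen2} involves a single variable $z=z_1$ and a single exponent $\mu=\mu_1=\mu_k$, whence $\hat{\bm{t}}\cdot\bm{\mu}=\mu$ for every $\bm{t}$ with $|\bm{t}|>0$. Consequently there is no freedom in the choice of the majorizing exponent $\mu_*$ used in the proof of Proposition~\ref{prop:suff1}: one necessarily has $\mu_*=\mu$, and in fact no majorization is even required, since $|A_t(\hat{\bm{t}}\cdot\bm{\mu},1)|=|A_t(\mu,1)|$ identically.

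Next I would simply read off the two specialised bounds. Proposition~\ref{prop:suff1} asserts absolute convergence whenever $\sum_j|z_j|=|z|$ does not exceed the minimum appearing in eq.~\eqref{eq:radconv}; but for $k=1$ the two entries of that minimum are identical and equal to $1/(|\mu|^\mu|1-\mu|^{1-\mu})$, so that $|z|\le 1/(|\mu|^\mu|1-\mu|^{1-\mu})$ is \emph{sufficient}. Proposition~\ref{prop:nec1} asserts that $|z|\le|z|_{\max}=1/(|\mu|^\mu|1-\mu|^{1-\mu})$ is \emph{necessary} (eq.~\eqref{eq:conv_nec_j}). Since the two thresholds coincide exactly, the single inequality eq.~\eqref{eq:radconv1} is at once necessary and sufficient, which is the claim.

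The only point that genuinely demands the finer estimate rather than the bare ratio test is the behaviour on the circle $|z|=|z|_{\max}$, where d'Alembert's test is inconclusive. Here I would invoke the asymptotic form eq.~\eqref{eq:fc_asymp} with $r=1$: on this circle the general term satisfies $|A_t(\mu,1)\,z^t|\sim c\,t^{-3/2}$ for a constant $c$, because the exponential factor $(|\mu|^\mu|1-\mu|^{1-\mu})^t$ is exactly cancelled by $|z|_{\max}^t$. Absolute convergence then follows by comparison with the convergent $p$-series $\sum t^{-3/2}$ (here $p=\tfrac32>1$), which is precisely what justifies the closed inequality ``$\le$'' in eqs.~\eqref{eq:conv_nec_j} and \eqref{eq:radconv1} rather than a strict one. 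I expect this boundary comparison, not the interior ratio test, to be the one nontrivial step; everything else is bookkeeping that specialises the general propositions to $k=1$.

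Finally I would record the standing hypotheses $\mu\ne0$ and $\mu\ne1$ inherited from the Lemma, since these are exactly the cases in which eq.~\eqref{eq:fc_asymp} is valid and must therefore be excluded. They are genuinely necessary exclusions and not mere technicalities: at $\mu=0$, for instance, one computes $A_t(0,1)=0$ for all $t\ge2$, so the series $B_0(1;z)=1+z$ terminates and converges on all of $\mathbb{C}$, and no finite radius governs it; the case $\mu=1$ is analogous. With these cases set aside, the combination of the two propositions yields eq.~\eqref{eq:radconv1} as stated.
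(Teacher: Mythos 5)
Your proposal is correct and follows essentially the same route as the paper: the corollary is obtained by specialising Propositions \ref{prop:nec1} and \ref{prop:suff1} to $k=1$, noting that the two thresholds coincide, with convergence on the boundary circle justified by the $t^{-3/2}$ decay in the asymptotic form eq.~\eqref{eq:fc_asymp} exactly as in the proof of Proposition \ref{prop:nec1}. Your explicit remarks on the degenerate cases $\mu=0$ and $\mu=1$ are a sensible addition already implicit in the hypotheses of the Lemma.
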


\begin{remark}
It is clear that the conditions in eq.~\eqref{eq:conv_nec_j} are individually necessary,
but, even taken together, they are not sufficient to guarantee absolute convergence of the full sum in
eq.~\eqref{eq:fc_kgen2}.
Hence an upper bound for the measure of the domain of absolute convergence,
for $(|z_1|,\dots,|z_k|) \in \mathbb{R}^k$, is given by the finite product
\bq
\label{eq:measure_upper}
\mu \le \prod_{j=1}^k \frac{1}{|\mu_j|^{\mu_j} |1-\mu_j|^{1-\mu_j}} < \infty \,.
\eq
The use of $\mu$ on the left hand side to denote measure should not be confused with other uses of $\mu$ in this paper.
The true domain of absolute convergence is a set of smaller measure.
This justifies the claim at the beginning of this section that 
the series in eq.~\eqref{eq:fc_kgen2} has a `finite domain' of absolute convergence, 
i.e.~finite measure.
Similarly, using the sufficient condition in eq.~\eqref{eq:radconv}
and $\mu_*$ from eq.~\eqref{eq:radconv_mu*}, a lower bound for the measure of the domain of absolute convergence is
\bq
\label{eq:measure_lower}
\mu \ge \frac{1}{k!} \biggl(\frac{1}{|\mu_*|^{\mu_*} |1-\mu_*|^{1-\mu_*}}\biggr)^k > 0 \,. 
\eq
The measure is positive: for sufficiently small $|z_j|$, $j=1,\dots,k$,
the series in eq.~\eqref{eq:fc_kgen2} converges in an open neighborhood of the origin for $\bm{z}\in\mathbb{C}^k$.
Of course this latter fact could be deduced directly using eq.~\eqref{eq:f_mult},
but eq.~\eqref{eq:measure_lower} supplies a quantitative lower bound.
For the special case of a trinomial, where $k=1$,
the two bounds in eqs.~\eqref{eq:measure_upper} and \eqref{eq:measure_lower} coincide.
\end{remark}

\begin{remark}
A complete derivation of a necessary and sufficient condition 
for the absolute convergence of the series in eq.~\eqref{eq:fc_kgen2} 
has not yet been discovered.
However, the situation is different for an {\em algebraic equation}.
As stated in the introduction,
the necessary and sufficient bound for the convergence of the series solution
of eq.~\eqref{eq:alg_eq_n_intro} will be presented in Sec.~\ref{sec:conv3}.
\end{remark}

%\vfill\pagebreak
\setcounter{equation}{0}
\section{\label{sec:algeq1} Algebraic equations}
\subsection{Preliminary remarks}
We now treat some applications of the above formalism.
In this paper we shall treat algebraic equations, i.e.~series solutions for roots of polynomials. 
Consider the general algebraic equation of degree $n$ with $x\in\mathbb{C}$ 
and complex coefficients $a_0,\dots,a_n$
\bq
\label{eq:algeq1}
0 = a_0 +a_1x + \cdots + a_nx^n \,.
\eq
We require $a_0\ne0$ else we factor out a root $x=0$.
We also require $a_n\ne0$. 
We begin with an obvious, but necessary, caveat.
It is possible that some or all of the remaining $a_j$ could vanish.
To avoid cluttering the presentation, it is to be understood that in all of the multinomial sums below,
the sums extend only over the nonzero $a_j$.
We now note two elementary transformations of eq.~\eqref{eq:algeq1},
which do not affect the fundamental properties of the roots.
First, we can multiply all the coefficients by a constant $\lambda \ne 0$.
This does not change the roots of eq.~\eqref{eq:algeq1}.
Next, we can replace $x$ by $\mu y$, where $\mu\ne0$.
The roots for $x$ are simply those for $y$, multiplied by $\mu$.
The resulting equation is
$\sum_{j=0}^n a_j\lambda\mu^j y^j = 0$.
Define $b_j = a_j\lambda\mu^j$.
We can select two integers $p$ and $q$ such that $0 \le p < q \le n$
and find values for $\lambda$ and $\mu$ such that $b_p=b_q=1$, yielding
\bq
\label{eq:algeq2}
0 = b_0 +b_1y + \cdots +y^p +\cdots +y^q +\cdots + b_ny^n \,.
\eq
Clearly both $a_p$ and $a_q$ must be nonzero to do this.
It is easily derived that $\mu = (a_p/a_q)^{1/(q-p)}$ and 
\bq
b_j = \frac{a_j}{a_p^{(q-j)/(q-p)} a_q^{(j-p)/(q-p)}} \,.
\eq
Technically, $b_j$ depends on $p$ and $q$ also, but we consider this to be understood below.
Clearly a branch cut is required to derive the above expressions.
There are actually $q-p$ solutions for $\mu$ and $b_j$, indexed by the $q-p$ roots of unity $1^{1/(q-p)}$
(actually $(-1)^{1/(q-p)}$, we shall see this below).
For brevity we define the set $\mathscr{N}_{npq} = \{0,\dots,n\} \setminus \{p,q\}$.
We divide eq.~\eqref{eq:algeq2} through by $y^p$ and rearrange terms to obtain the equation 
\bq
\label{eq:algeq3}
-y^{q-p} = 1 + \sum_{j\in \mathscr{N}_{npq}} b_j y^{j-p} \,.
\eq
Note that if we set all the $b_j$ to zero, the equation reduces to $y^{q-p} = -1$.
The solution is any of the radicals $y = (-1)^{1/(q-p)}$.
By the implicit function theorem, an absolutely convergent solution for $y$ exists
for sufficienly small amplitudes of the $|b_j|$.
Hence the domain of absolute convergence of the series solution of eq.~\eqref{eq:algeq3} is nonempty,
expressing $y$ in a power series in the $b_j$.
(We already know this from Sec.~\ref{sec:conv1}.)
Now set $\zeta = -y^{q-p}$, so $y = (-1)^{1/(q-p)}\zeta^{1/(q-p)}$.
We append a subscript $\ell$ on $x$, $y$ and $\zeta$ to index the $q-p$ choices of radicals $(-1)^{1/(q-p)}$.
Employing a branch cut along the positive real axis, they are
$e^{i\pi(2\ell+1)/(q-p)}$, where $\ell=0,\dots,q-p-1$.
Set $\mu_j = (j-p)/(q-p)$, then $b_j = a_j/(a_p^{1-\mu_j}a_q^{\mu_j})$ and $\zeta_\ell$ satisfies
\bq
\label{eq:poly_root1}
\zeta_\ell = 1 + \sum_{j\in\mathscr{N}_{npq}} e^{i\pi(2\ell+1)\mu_j} \frac{a_j}{a_p^{1-\mu_j}a_q^{\mu_j}}\,\zeta_\ell^{\mu_j} \,.
\eq
This has the form of eq.~\eqref{eq:f_mult}, with $k=\textrm{Card}(\mathscr{N}_{npq})$ parameters (the $b_j$).
The expressions for $t$ and $\bm{t}\cdot\bm{\mu}$ are, in this case,
\bq
t = \sum_{j\in\mathscr{N}_{npq}} t_j \,,\qquad
\bm{t}\cdot\bm{\mu} = \sum_{j\in\mathscr{N}_{npq}} t_j\mu_j \,.
\eq
The solution for $\zeta_\ell$ is given by eq.~\eqref{eq:fc_kgen2}
and $x_\ell$ is obtained from $\zeta_\ell$ via
\bq
\label{eq:x_zeta_pq}
x_\ell = e^{i\pi\frac{2\ell+1}{q-p}} \Bigl(\frac{a_p}{a_q}\Bigr)^{1/(q-p)} \zeta_\ell^{1/(q-p)} \,.
\eq
It is conventional to solve for the $r^{th}$ powers of the roots.
From Theorem \ref{thm:thm_k}(a) and (b)
and eq.~\eqref{eq:x_zeta_pq}, we obtain
\bq
\label{eq:pivot_pq}
\begin{split}
x_\ell^r &= e^{i\pi\frac{(2\ell+1)r}{q-p}} \Bigl(\frac{a_p}{a_q}\Bigr)^{r/(q-p)} \,
\sum_{\bm{t}\in\mathbb{N}^{n-1}} \mathscr{A}_{\bm{t}}\Bigl(\bm{\mu}, \frac{r}{q-p}\Bigr)\, 
e^{i\pi (2\ell+1)\bm{t}\cdot\bm{\mu}} \Bigl(\prod_{j\in\mathscr{N}_{npq}} b_j^{t_j}\Bigr) 
\\
&= e^{i\pi\frac{(2\ell+1)r}{q-p}} \Bigl(\frac{a_p}{a_q}\Bigr)^{1/(q-p)} \,
\sum_{\bm{t}\in\mathbb{N}^{n-1}} \mathscr{A}_{\bm{t}}\Bigl(\bm{\mu}, \frac{r}{q-p}\Bigr)\, 
\frac{e^{i\pi (2\ell+1)\bm{t}\cdot\bm{\mu}}}{a_p^{t-\bm{t}\cdot\bm{\mu}} a_q^{\bm{t}\cdot\bm{\mu}}} 
\Bigl(\prod_{j\in\mathscr{N}_{npq}} a_j^{t_j}\Bigr) \,.
\end{split}
\eq
In the first line of eq.~\eqref{eq:pivot_pq},
$x_\ell^r$ is a sum over products of positive integral powers of the $b_j$, i.e.~a power series.
In the second line, $a_p$ and $a_q$ appear with fractional (and possibly negative) exponents, 
whereas the other $a_j$ appear with positive integral powers.
Hence in general the series solution for the roots 
is a Laurent--Puiseux series in the coefficients of the  polynomial,
i.e.~eq.~\eqref{eq:algeq1}.
This is of course a known fact, not connected with Fuss--Catalan numbers.

Note that eq.~\eqref{eq:algeq1} has $n$ roots, counting multiplicities,
but eq.~\eqref{eq:pivot_pq} yields $q-p$ roots.
If $p=0$ and $q=n$, so $q-p=n$, then eq.~\eqref{eq:pivot_pq} yields expressions for all the $n$
roots of eq.~\eqref{eq:algeq1}.
If $q-p<n$ then we require multiple series to obtain all the $n$ roots of eq.~\eqref{eq:algeq1}.
It is simplest to explain with an example.
Choose $p=0$ and $q=1$, this yields only one root.
Next choose $p=1$ and $q=n$, this yields $n-1$ roots.
One must try different selections for $p$ and $q$ to verify that expressions 
for all the roots of eq.~\eqref{eq:algeq1} have been found. 
We shall see this in connection with the trinomial below.

It is also possible to choose $q<p$.
Doing so yields the same set of roots of eq.~\eqref{eq:algeq1} 
obtained by interchanging $p$ and $q$.
This can be seen with some elementary transformations and relabelling of indices.
The details are left to the reader.
Hence without loss of generality we may assume $p<q$.

For all the nonvanishing $a_j$, from eq.~\eqref{eq:conv_nec_j}, 
for absolute convergence we require (necessary, not sufficient)
\bq
\label{eq:alg_b_nec}
|b_j| = \frac{|a_j|}{|a_p|^{1-\mu_j}|a_q|^{\mu_j}} \le \frac{1}{|\mu_j|^{\mu_j} |1-\mu_j|^{1-\mu_j}} \,.
\eq
Let the lowest and highest indices of the nonzero $a_j$ in $\mathscr{N}_{npq}$ be $j_{\min}$ and $j_{\max}$, respectively.
From eq.~\eqref{eq:radconv}, the sufficient (but not necessary) criterion for absolute convergence is
\bq
\label{eq:alg_b_suff}
\begin{split}
\sum_{j\in\mathscr{N}_{npq}} |b_j| = 
\sum_{j\in\mathscr{N}_{npq}} \frac{|a_j|}{|a_p|^{1-\mu_j}|a_q|^{\mu_j}} &\le 
\min\biggl( \frac{1}{|\mu_{j_{\min}}|^{\mu_{j_{\min}}} |1-\mu_{j_{\min}}|^{1-\mu_{j_{\min}}}} \,,
\\
&\qquad\qquad\qquad
\frac{1}{|\mu_{j_{\max}}|^{\mu_{j_{\max}}} |1-\mu_{j_{\max}}|^{1-\mu_{j_{\max}}}} \biggr) \,.
\end{split}
\eq
As noted in Sec.~\ref{sec:conv1}, the domain of absolute convergence depends only on the amplitudes $|a_j|$
and hence the domain is the same for all the choices of radicals for $(-1)^{1/(q-p)}$, i.e.~all the values of $\ell$.

\subsection{Comment on McClintock's series}
McClintock in 1895 published a paper \cite{McClintock} 
deriving series expressions for all the roots of a polynomial of arbitrary degree.
He began with the illustrative example $x^6 = -1 -x$.
McClintock obtained \cite[eq.~(1)]{McClintock} 
\bq
x = \omega -\omega^2 a -\frac32 \omega^3a^2 -\frac83 \omega^4 a^3 - \cdots 
\eq
where $a=-\frac16$ and ``$\omega$ is any one of sixth-roots of $-1$.''
This is essentially exactly the procedure I employed above:
in eq.~\eqref{eq:x_zeta_pq} I took an $n^{th}$ root of the highest power $x^n$
and solved for $\zeta_\ell$ in eq.~\eqref{eq:poly_root1}.
Note that $e^{i\pi(2\ell+1)/n}$ is an $n^{th}$ root of $-1$. 
McClintock examined many other polynomials, on a case by case basis;
the analysis in the previous subsection gives a general expression for the roots of an arbitrary polynomial.
McClintock's solutions are multiparameter Fuss--Catalan series.
McClintock noted that his series had finite radii of convergence
but did not derive a general expression for the radius of convergence.
McClintock also noted the use of the Lagrange inversion theorem in his derivations.

\subsection{Comment on Mellin's solution}
Mellin derived a series solution for the following algebraic equation
\cite{Mellin}
\bq
\label{eq:poly_mellin_nach}
z^n +x_1z^{n_1} +x_2z^{n_2} +\cdots +x_pz^{n_p} -1 = 0 \,.
\eq
Here $n > n_s \ge 1$, $s=1,\dots,p$ (see \cite{Mellin}).
Hence all the coefficients $x_s$ in eq.~\eqref{eq:poly_mellin_nach} are nonzero by definition.
Mellin derived a series solution for the 
`Hauptl{\"o}sung' or principal root,
which is the unique branch which equals 1
for $x_1=\cdots=x_p=0$,
and where $\alpha$ is a positive number \cite{Mellin}
\bq
z^\alpha = 1 +\alpha\sum_{k=1}^\infty \frac{(-1)^k}{n^k}
\sum_{\nu_1+\cdots+\nu_p=k}
\frac{\prod_{\mu=1}^{k-1} (\alpha+n_1\nu_1+\cdots n_p\nu_p -n\mu)}
{\Gamma(\nu_1+1)\Gamma(\nu_2+1)\cdots\Gamma(\nu_p+1)}\,
x_1^{\nu_1}\cdots x_p^{\nu_p} \,.
\eq
It is easily verified that this equals the Fuss--Catalan series
with $k=p$, $r=\alpha/n$, 
$\mu_j=n_j/n$, $t_j=\nu_j$ and $z_j = -x_j$, for $j=1,\dots,p$, so
\bq
z^\alpha = \mathcal{B}\Bigl(
\Bigl(\frac{n_1}{n},\dots,\frac{n_p}{n}\Bigr);\frac{\alpha}{n};
(-x_1,\dots,-x_p)\Bigr) \,.
\eq
In fact $\alpha$ is not constrained to be positive.
Mellin also specified the following bound for the domain of convergence of his series;
it is clearly sufficient but not always necessary
\cite{Mellin}
\bq
\label{eq:radconv_mellin}
|x_1|, \dots, |x_p| < 
\frac{1}{p} \min\biggl( \frac{1}{|\mu_1|^{\mu_1} |1-\mu_1|^{1-\mu_1}} \,, \frac{1}{|\mu_p|^{\mu_p} |1-\mu_p|^{1-\mu_p}} \biggr) \,.
\eq
For ease of comparison with my work,
I have written $\mu_1$ and $\mu_p$ on the right hand side.
This is a more conservative bound and is superseded by eq.~\eqref{eq:radconv}
or eq.~\eqref{eq:alg_b_suff}.

\subsection{Comment on Birkeland's series}\label{sec:Birkeland}
Birkeland published papers on the solutions of algebraic equations using hypergeometric series
\cite{Birkeland_trinom,Birkeland_quintic,Birkeland_algebraic_1920},
culminating in his 1927 paper \cite{Birkeland_algebraic_1927}.
We study the latter paper (which largely subsumes his earlier work).
Birkeland treated the general algebraic equation with complex coefficients 
\cite[eq.~(1)]{Birkeland_algebraic_1927} 
\bq
\label{eq:birk_eq1}
a_0x^n + a_ax^{n-1} +\cdots +a_{n-1}x +a_n = 0 \,.
\eq
Hence his indexing is the opposite of that in eq.~\eqref{eq:algeq1}.
Birkeland also selected two integers, with $p>q$, such that $0 \le q < p \le n$.
He then obtained the scaled equation
\cite[eq.~($1^\prime$)]{Birkeland_algebraic_1927}
(see his paper for the definitions of $z$, $l_i$ and $m_i$)
\bq
\label{eq:birk_eq1p}
z^p = z^q + l_1 z^{m_1} + l_2 z^{m_2} +\cdots + l_s z^{m_s} \,. 
\eq
This is very similar to eq.~\eqref{eq:algeq3}.
Birkeland then derived a series solution of eq.~\eqref{eq:birk_eq1p}.
First define $\varepsilon$ as a primitive root of unity satisfying the equation $x^{p-q}=1$.
Then Birkeland obtained for the root $z_j$
(raised to the power $\gamma$), where $j=1,2,\dots,p-q$
\cite[eq.~(5)]{Birkeland_algebraic_1927}
\bq
\label{eq:birk_eq5}
z_j^\gamma = \varepsilon^{j\gamma} \biggl[\,
1 + \frac{\gamma}{p-q}\sum_{\alpha_1,\dots,\alpha_s=0}^\infty 
\varepsilon^{jv} \,\frac{(\tau,r-1)}{\alpha_1!\alpha_2!\dots\alpha_s!}\, l_1^{\alpha_1}l_2^{\alpha_2}\dots l_s^{\alpha_s}
\,\biggr] \,.
\eq
({\em N.B.:} I changed $i$ to $j$ in Birkeland's equation to avoid confusion with $i=\sqrt{-1}$.)
With elementary changes of notation, 
eq.~\eqref{eq:birk_eq5} is equivalent to the first line of eq.~\eqref{eq:pivot_pq}.
See in particular \cite[eq.~(4)]{Birkeland_algebraic_1927}
for his definitions of $\tau$ and $v$.
Birkeland did not recognize his series coefficients as Fuss--Catalan numbers.
Birkeland then expressed the series solution in
eq.~\eqref{eq:birk_eq5}
in terms of sums of hypergeometric series.
It would take us too far afield to discuss hypergeometric series in this paper.
Birkeland derived the same convergence criteria as in Sec.~\ref{sec:conv1}.
He derived the necessary (but not sufficient) bound
\cite[unnumbered, \S 2]{Birkeland_algebraic_1927}
\bq
|\zeta_1| < 1 \,, \quad
|\zeta_2| < 1 \,, \quad \dots, \quad
|\zeta_s| < 1 \,.
\eq
This matches eq.~\eqref{eq:alg_b_nec},
after working through the details of his notation:
his $\zeta_j$ equals my $b_j |\mu_j|^{\mu_j} |1-\mu_j|^{1-\mu_j}$,
with obvious allowances for differences in his indexing.
Birkeland did not recognize that one can write `$\le$' instead of strict inequalities `$<$' in the bound.
As for the sufficient (but not necessary) bound, Birkeland obtained
\cite[eq.~12]{Birkeland_algebraic_1927}
\bq
\label{eq:birk_eq12}
|l_1| +|l_2| +\cdots +|l_s| < \frac{p-q}{m+p-q}\frac{1}{\displaystyle \Bigl(1+\frac{p-q}{m}\Bigr)^{\frac{m}{p-q}}} \,.
\eq
This is clearly similar to eq.~\eqref{eq:alg_b_suff}.
From eq.~\eqref{eq:birk_eq1p}, 
Birkeland's $l_j$ are my $b_j$, and on the right hand side of eq.~\eqref{eq:birk_eq12},
he wrote out the bound explicitly in terms of integers $m$, $p$ and $q$.
Contrary to Mellin \cite{Mellin}
(see eq.~\eqref{eq:radconv_mellin}) and myself (see eq.~\eqref{eq:alg_b_suff}),
Birkeland did not write a `min' of two possible choices for the best bound.
Here Birkeland made an error of algebra:
Birkeland defined $m$ in eq.~\eqref{eq:birk_eq12} as the value of $m_\nu$ which maximizes the value of $|m_\nu-p|$.
Quoting from \cite{Birkeland_algebraic_1927},
``Wir wollen mit $m$ die gr{\"o}{\ss}te der Zahlen $|m_\nu-p|$ \dots''
However, as was seen in Sec.~\ref{sec:conv1} for the parameter $\mu_*$,
we must choose $\mu_*$ to be the value of $\mu_j$ which maximizes the value of $|\mu_j-\frac12|$.
Working through Birkeland's notation, we must choose $m$ to maximize the value of $|m_\nu -(p+q)/2|$.
Birkeland then applied his formalism to derive the solution of the trinomial,
which he had also treated in an earlier paper \cite{Birkeland_trinom}.
The trinomial is sufficiently important that it will be studied in a section of its own in Sec.~\ref{sec:tri}.

\subsection{Comment on Lewis's series}
In 1939, Lewis \cite{Lewis}
published a paper on the solution of algebraic equations by infinite series. 
He treated the trinomial, then the quadrinomial and finally general multinomial equations.
We discuss only the general case here.
Lewis treated the general algebraic equation with complex coefficients 
\cite[eq.~(39)]{Lewis} 
\bq
\label{eq:lewis_eq39}
a_nz^n -a_kz^k -a_gz^g - \cdots -a_bz^b -a_0 = 0 \,.
\eq
(The above corrects a misprint in \cite{Lewis}.)
The notation suggests that all the coefficients are nonzero.
Lewis treated only the case we denoted above by $p=0$ and $q=n$.
He wrote
\cite[eq.~(40)]{Lewis} 
\bq
\label{eq:lewis_eq40}
z^n = a_0/a_n + (1/a_n)(a_kz^k +a_gz^g +\cdots +a_bz^b) \,.
\eq
Lewis employed Lagrange inversion to derive his solution.
Following Lewis, we write $a_0/a_n = re^{i\theta}$ 
and the $n$ roots of $a_nz^n - a_0 = 0$ are denoted by 
$\alpha_h = r^{1/n}e^{i(2h\pi+\theta)/n}$, where $h=1,\dots,n$.
The solution for the root $z_h$ is given as 
\cite[eq.~(41)]{Lewis} 
\bq
\label{eq:lewis_eq41}
z_h = \sum_{p,q,\dots,v=0}^\infty \frac{a_k^p a_g^q \dots a_b^v}{p!q!\dots v! (a_0n)^{p+q+\cdots+v}}
\binom{1 +pk +\cdots +vb -n}{p+q+\cdots+v-1} \,\alpha_h^{1 +pk +\cdots +vb} 
\qquad (h=1,\dots,n) \,.
\eq
Lewis did not derive a series for powers of the roots.
With some effort, eq.~\eqref{eq:lewis_eq41} can be equated to the solution in eq.~\eqref{eq:pivot_pq}.
Lewis derived the following sufficient condition for absolute convergence
\cite[eq.~(42)]{Lewis} 
\bq
\label{eq:lewis_eq42}
\biggl\{ \frac{|a_k\alpha_h^k| +\cdots +|a_b\alpha_h^b|}{|a_0|}\biggr\}^n 
\le \frac{n^n}{k^k(n-k)^{n-k}} \,.
\eq
Unlike Mellin \cite{Mellin} and Birkeland \cite{Birkeland_algebraic_1927},
Lewis \cite{Lewis} recognized that equality `$\le$' is permitted in the bound.
However, like Birkeland, Lewis failed to recognize that the bound on the right hand side 
is given by the minimum of multiple possibilities,
and the expression he derived is not always the correct choice.

\subsection{Comment on Raney's series}
Raney \cite{Raney}
employed his formalism to demonstrate the use of Lagrange inversion for a power series
$\bar{z} = \sum_{n=0}^\infty a_n\bar{x}^n$ 
\cite[eqs.~(5,4), (5,7) and (5.8)]{Raney}.
Then in \cite[Sec.~6]{Raney},
Raney applied his formalism to derive series solutions for
algebraic equations.
As mentioned earlier,
\cite[eqs.~(6.1) and (6.2)]{Raney} yield eq.~\eqref{eq:f_mult}.
Raney took the $\mu_j$ to be integers;
this yields an algebraic equation.
Raney displayed the example of the trinomial
$\bar{w} = 1 +\bar{x}\bar{w}^n$
\cite[eq.~(6.3)]{Raney}, 
with the series solution
\cite[eq.~(6.4)]{Raney}
\bq
\bar{w} = \sum_{k=0}^\infty \frac{1}{1+(n-1)k} \binom{nk}{k} \bar{x}^k \,.
\eq
This matches the series coefficients in eq.~\eqref{eq:fc_def_int},
replacing $m$ by $n$ and $t$ by $k$.
Raney did not discuss questions of convergence.
Unlike the other authors cited earlier in this section,
Raney took the coefficients in his equations to be elements
in a commutative ring, not just complex numbers.

%\vfill\pagebreak
\setcounter{equation}{0}
\section{\label{sec:quintic} Quintic}
The quintic is sufficiently important that it is placed in a separate section.
It is known that by means of a Tschirnhaus transformation, a general quintic may be brought to the Bring-Jerrard normal form
\bq
\label{eq:bj}
x^5 - x + \gamma = 0 \,.
\eq
This algebraic equation (of degree $n=5$) lends itself naturally to a solution using Fuss--Catalan series.
Using the formalism in Sec.~\ref{sec:quintic}, we set $p=0$ and $q=5$.
From eq.~\eqref{eq:x_zeta_pq}, 
$x_\ell = e^{i\pi(2\ell+1)/5} \gamma^{1/5} \zeta_\ell^{1/5}$ with $\ell=0,\dots,4$.
Then $\zeta_\ell$ satisfies the equation
\bq
\zeta_\ell = 1 - \frac{e^{i\pi(2\ell+1)/5}}{\gamma^{4/5}}\,\zeta_\ell^{1/5} \,.
\eq
There is only one summand, so $k=1$ and $\mu=1/5$. 
The roots $x_\ell$ are given by
\bq
\label{eq:bj_soln1}
\begin{split}
x_\ell = e^{i\pi(2\ell+1)/5} \gamma^{1/5} \mathcal{B}\Bigl(\frac15;\, \frac15;\, -\frac{e^{i\pi(2\ell+1)/5}}{\gamma^{4/5}}\Bigr) \,.
\end{split}
\eq
From Corollary \ref{eq:cor_k1}, the condition for convergence is necessary and sufficient:
\bq
\label{eq:bj_conv1}
\frac{1}{|\gamma|^{4/5}} \le \frac{1}{(\frac15)^{1/5}(\frac45)^{4/5}} = \frac{5}{4^{4/5}} \,,\qquad
|\gamma| \ge \frac{4}{5^{5/4}} \simeq 0.534992 \,.
\eq
We can say more. 
What if the value of $\gamma$ does not satisfy the above bound?
There are alternative series we can derive.
Rewrite eq.~\eqref{eq:bj} as $x = \gamma + x^5$ and set $\zeta=x/\gamma$. 
This corresponds to $p=0$ and $q=1$.
Then $\zeta$ satisfies
$\zeta = 1 +\gamma^4 \zeta^5$.
Once again $k=1$ and now $z_1=\gamma^4$ and $\mu=5$. 
There is only one root and it is
\bq
\label{eq:bj_soln2}
x = \gamma\zeta = \gamma \mathcal{B}(5;1;\gamma^4) \,.
\eq
{\em Comment: eq.~\eqref{eq:bj_soln2} is equivalent to the solution published by Eisenstein 
\cite{Eisenstein_1844} (see also \cite{Stillwell}, in English).
Eisenstein expressed the Bring-Jerrard normal form differently and solved the equation $x^5+x+\gamma=0$.}  
The necessary and sufficient condition for convergence is 
\bq
\label{eq:bj_conv2}
|\gamma|^4 \le \frac{1}{5^5 4^{-4}} = \frac{4^4}{5^5} \,,\qquad
|\gamma| \le \frac{4}{5^{5/4}} \,.
\eq
This is the inverse of the condition in eq.~\eqref{eq:bj_conv1}.
However, we have found only one root.
We obtain the other four roots as follows. 
We divide eq.~\eqref{eq:bj} through by $x$ to obtain
$x^4 = 1 - \gamma/x$.
This corresponds to $p=1$ and $q=5$.
Set $\zeta=x^4$ or $x = e^{i2\pi\ell/4} \zeta^{1/4}$, for $l=0,\dots,3$, so $\zeta_\ell$ satisfies
\bq
\zeta_\ell = 1 -\gamma e^{-i\pi\ell/2} \zeta_\ell^{-1/4} \,.
\eq
Once again $k=1$ and now $z_1= -\gamma e^{-i\pi\ell/2}$ and $\mu=-\frac14$. 
There are four roots, indexed by $\ell=0,\dots,3$
\bq
\label{eq:bj_soln3}
\begin{split}
x_\ell = e^{i\pi\ell/2} \mathcal{B}\Bigl(-\frac14;\, \frac14;\, -e^{-i\pi\ell/2} \gamma\Bigr) \,.
\end{split}
\eq
The necessary and sufficient condition for convergence is 
\bq
|\gamma| \le \frac{1}{(\frac14)^{1/5}(\frac54)^{5/4}} = \frac{4}{5^{5/4}} \,.
\eq
This is the same as eq.~\eqref{eq:bj_conv2}.
As noted earlier for the case $k=1$, all the series converge on their respective circles of convergence.

The series in eqs.~\eqref{eq:bj_soln2} and \eqref{eq:bj_soln3} do not yield the same root.
If we set $\gamma=0$, eq.~\eqref{eq:bj} reduces to $x(x^4-1)=0$.
One root is zero and the others are the fourth roots of unity.
The roots in the series in eqs.~\eqref{eq:bj_soln2} and \eqref{eq:bj_soln3} 
lie on the branches which respectively approach zero and the fourth roots of unity.
Hence the two series, taken together, yield all five roots of eq.~\eqref{eq:bj}.

There are multiple ways to find the roots of a polynomial using Fuss--Catalan series.
The series in eq.~\eqref{eq:bj_soln1} converges for $|\gamma| \ge 4/5^{5/4}$
whereas those in eqs.~\eqref{eq:bj_soln2} and \eqref{eq:bj_soln3}
converge for $|\gamma| \le 4/5^{5/4}$.
In all cases one obtains convergent solutions for all the five roots of eq.~\eqref{eq:bj},
thence the general quintic.

The Bring-Jerrard normal form has been solved using hypergeometric functions,
e.g.~see \cite{Nash}.
Klein's solution of the quintic \cite{Klein}
also employed hypergeometric series.
The series have finite radii of convergence (actually the same radius for all the series).
Analytic continuation is required to treat all coefficients of the general quintic.
Using Fuss--Catalan series,
the series in eqs.~\eqref{eq:bj_soln2} and \eqref{eq:bj_soln3} 
are the explicit analytic continuations of 
the series in eq.~\eqref{eq:bj_soln1}
across the `boundary' $|\gamma| = 4/5^{5/4}$.
Together they cover the whole parameter space, i.e.~all values of $\gamma$ in eq.~\eqref{eq:bj}.
The solution of the Bring-Jerrard normal form using Fuss--Catalan series 
is arguably `cleaner' than that using hypergeometric series.

%\vfill\pagebreak
\setcounter{equation}{0}
\section{Trinomial}\label{sec:tri} 
\subsection{General solution}
The trinomial is also sufficiently important that it is placed in a separate section.
The Bring-Jerrard normal form of the quintic and 
Lambert's trinomial, to be discussed in Sec.~\ref{sec:lamb},
are particular examples of the general trinomial equation 
\bq
\label{eq:tri}
x^{m+n} + ax^n + b = 0 \,.
\eq
The general solution of the trinomial, 
for arbitrary values of the coefficients,
was derived by
Birkeland (1920,1927) \cite{Birkeland_trinom,Birkeland_algebraic_1927}, 
Lewis (1935) \cite{Lewis} and 
Eagle (1939) \cite{Eagle_trinomial}
(who employed McClintock's \cite{McClintock} formalism). 
The general solution of the trinomial was also derived in 1908 by
P.~A.~Lambert \cite{PA_Lambert_1908},
not to be confused with Johann Lambert.
P.~A.~Lambert in fact also presented a series solution for the general algebraic equation,
but his analysis contained some technical errors and was not discussed in 
Sec.~\ref{sec:algeq1}.
The derivation below follows Eagle (1939) \cite{Eagle_trinomial},
and eq.~\eqref{eq:tri} is taken from his paper.
We have already noted that the solutions are Fuss--Catalan series.
All four authors cited above derived correct expressions for the radii of convergence of their series.
The derivation below may be considered as an independent validation of their results.

As was seen in Sec.~\ref{sec:quintic}, there are three series.
To systematize the derivation, to give a more panoramic overview of the results, we proceed as follows.
Here $\ell$ takes values as appropriate to index the roots of unity.
\begin{itemize}
\item
Set $p=0$ and $q=m+n$ and 
$x_\ell = e^{i\pi(2\ell+1)/(m+n)} b^{1/(m+n)} \zeta_\ell^{1/(m+n)}$ then $\zeta_\ell$ satisfies
\bq
\zeta_\ell = 1+ e^{i\pi(2\ell+1)n/(m+n)} \frac{a}{b^{m/(m+n)}} \zeta_\ell^{n/(m+n)} \,.
\eq
Then $\mu = n/(m+n)$. This series yields $m+n$ roots.
\item 
Set $p=0$ and $q=n$ and 
$x_\ell=e^{i\pi(2\ell+1)/n} (b\zeta_\ell/a)^{1/n}$ then $\zeta_\ell$ satisfies 
\bq
\zeta_\ell = 1 +e^{i\pi (2\ell+1)(m+n)/n} \frac{b^{m/n}}{a^{(m+n)/n}} \zeta_\ell^{(m+n)/n} \,.
\eq
Then $\mu = (m+n)/n$. This series yields $n$ roots.
\item 
Set $p=n$ and $q=m+n$ and 
divide eq.~\eqref{eq:tri} through by $x^n$. 
Set $x_\ell=e^{i\pi(2\ell+1)/m} (a\zeta_\ell)^{1/m}$ then $\zeta_\ell$ satisfies 
\bq
\label{eq:trinom_3}
\zeta_\ell = 1 +e^{-i\pi (2\ell+1)n/m} \frac{b}{a^{(m+n)/m}} \zeta_\ell^{-n/m} \,.
\eq
Then $\mu = -n/m$. This series yields $m$ roots.
\end{itemize}
The respective series solutions are 
\begin{subequations}
\begin{align}
\label{eq:trinom_soln_1}
x_\ell &= e^{i\pi (2\ell+1)/(m+n)} b^{1/(m+n)} 
\mathcal{B}\Bigl(\frac{n}{m+n};\,\frac{1}{m+n};\, e^{i\pi\frac{(2\ell+1)n}{m+n}} \frac{a}{b^{m/(m+n)}} \Bigr) 
&\qquad& (0 \le \ell \le m+n-1) \,,
\\
\label{eq:trinom_soln_2}
x_\ell &= e^{i\pi(2\ell+1)/n} \frac{b^{1/n}}{a^{1/n}} 
\mathcal{B}\Bigl(\frac{m+n}{n};\,\frac{1}{n};\, e^{i\pi (2\ell+1)(m+n)/n} \frac{b^{m/n}}{a^{(m+n)/n}} \Bigr) 
&\qquad& (0 \le \ell \le n-1) \,,
\\
\label{eq:trinom_soln_3}
x_\ell &= e^{i\pi(2\ell+1)/m} a^{1/m}
\mathcal{B}\Bigl(-\frac{n}{m};\,\frac{1}{m};\, e^{-i\pi (2\ell+1)n/m} \frac{b}{a^{(m+n)/m}} \Bigr) 
&\qquad& (0 \le \ell \le m-1) \,.
\end{align}
\end{subequations}
The respective domains of convergence are as follows.
\begin{subequations}
\begin{align}
\frac{|a|}{|b|^{m/(m+n)}} &\le \frac{(m+n)}{n^{n/(m+n)}m^{m/(m+n)}} \,, &\qquad 
\frac{|b|^m}{|a|^{m+n}} &\ge \frac{m^mn^n}{(m+n)^{m+n}} \,,
\\
\frac{|b^{m/n}|}{|a|^{(m+n)/n}} &\le \frac{n}{(m+n)m^{-m/n}} \,, &\qquad 
\frac{|b|^m}{|a|^{m+n}} &\le \frac{m^mn^n}{(m+n)^{m+n}} \,,
\\
\frac{|b|}{|a|^{(m+n)/m}} &\le \frac{m}{n^{-n/m}(m+n)^{(m+n)/m}} \,, &\qquad 
\frac{|b|^m}{|a|^{m+n}} &\le \frac{m^mn^n}{(m+n)^{m+n}} \,.
\end{align}
\end{subequations}
If we set $b\to0$ then $x^n(x^m+a)\to0$. 
Hence $n$ roots approach zero and $m$ roots approach the respective $m^{th}$ roots of $-a$.
The roots of the second and third series respectively lie on the branches which approach zero
and the $m^{th}$ roots of $-a$ as $b\to0$.
The second and third series have the same domain of convergence 
and hence together yield all the $m+n$ roots of eq.~\eqref{eq:tri}.
The above set of three series are those found by Eagle \cite{Eagle_trinomial} and together yield
all the roots of the general trinomial for arbitrary values of the coefficients.
They are equivalent to the solutions derived by
P.~A.~Lambert \cite{PA_Lambert_1908},
Birkeland \cite{Birkeland_trinom,Birkeland_algebraic_1927} and
Lewis \cite{Lewis}.

Consider also the following.
Divide eq.~\eqref{eq:tri} through by $x^n$ as above, 
but now set $x_\ell=e^{i\pi(2\ell+1)/n} (a\zeta_\ell/b)^{-1/n}$. 
This corresponds to setting $p=n$ and $q=0$, i.e.~$q<p$.
Then $\zeta_\ell$ satisfies
\bq
\label{eq:trinom_4}
\zeta_\ell = 1 + e^{i\pi(2\ell+1)m/n} \frac{b^{m/n}}{a^{(m+n)/n}} \zeta_\ell^{-m/n} \,.
\eq
Compare this to eq.~\eqref{eq:trinom_3}
Now $\mu = -m/n$. This series yields $n$ roots.
It converges if and only if
\bq
\frac{|b|^{m/n}}{|a|^{(m+n)/n}} \le \frac{n}{m^{-m/n}(m+n)^{(m+n)/n}} \,, \qquad\qquad 
\frac{|b|^m}{|a|^{m+n}} \le \frac{m^mn^n}{(m+n)^{m+n}} \,.
\eq
The solution is
\bq
\label{eq:trinom_soln_4}
x_\ell = e^{i\pi(2\ell+1)/n} \frac{b^{1/n}}{a^{1/n}} 
\mathcal{B}\Bigl(-\frac{m}{n};\,-\frac{1}{n};\, e^{i\pi (2\ell+1)m/n} \frac{b^{m/n}}{a^{(m+n)/n}} \Bigr) 
\qquad (0 \le \ell \le n-1) \,.
\eq
Hence this series yields the same $n$ roots as the second series above, for which $\mu=(m+n)/n$,
with the same domain of convergence.
It is therefore the same series as in
eq.~\eqref{eq:trinom_soln_2}.
Even the permutations of the roots are identical, 
because the first terms of both series are
$e^{i\pi(2\ell+1)/n} (b/a)^{1/n}$. 
It was remarked in Sec.~\ref{sec:algeq1} that choosing $q<p$ yields the same solutions
as the series obtained by interchanging $p$ and $q$.

\begin{itemize}
\item
Johann Lambert \cite{Lambert_1758} solved the equation $x^m +px = q$ in 1758. 
Corless et al.~\cite{W_fcn_Corless_etal} stated
``In 1758, Lambert solved the trinomial equation $x = q + x^m$ 
by giving a series development for $x$ in powers of $q$.''
I found that the equation $x = q + x^m$ appears in Lambert's 1770 paper \cite[\S 8]{Lambert_1770}, 
where Lambert stated 
``auquel on peut toujous donner la forme plus simple''
(``to which we can always provide the simpler form'') 
and where Lambert derived a series for the $n^{th}$ power $x^n$.
Lambert's solutions are Fuss--Catalan series,
for the branch which approaches zero when $q\to0$.
Lambert's series solution for $x^n$, 
where $x = q + x^m$, may be written as \cite[\S 8]{Lambert_1770}
\bq
\label{eq:lamb_soln}
x^n = q\mathcal{B}(m;n;q^{m-1}) \,.
\eq
Lambert did not specify the radius of convergence of his series.
It converges if and only if
\bq
|q| \le \frac{m-1}{m^{m/(m-1)}} \,.
\eq

\item
Ramanujan also solved the trinomial via a series.
The equation he treated was \cite[first quarterly report, 1.6 (iv), eq.~(1.15)]{Ramanujan_Pt1}
\bq
\label{eq:Ramanujan_trinom}
aqx^p + x^q = 1 \,.
\eq
Ramanujan derived the following solution for any power $n$
\cite[first quarterly report, 1.6 (iv), eq.~(1.16)]{Ramanujan_Pt1}
\bq
\label{eq:Ramanujan_soln}
x^n = \frac{n}{q} \sum_{k=0}^\infty \frac{\Gamma(\{n+pk\}/q)(-qa)^k}{\Gamma(\{n+pk\}/q -k+1)k!} \,.
\eq
This is the branch which approaches unity for $a\to0$.
The above expression is stated in \cite{Ramanujan_Pt1} to be valid for all real numbers $n$, $p$, $q$
and for complex $a$ satisfying
\bq
\label{eq:Ramanujan_conv}
|a| \le |p|^{-p/q} |p-q|^{(p-q)/q} \,.
\eq
Let us verify Ramanujan's solution.
The expression in eq.~\eqref{eq:Ramanujan_soln} is tricky if $n=0$.
The first term in the sum is actually unity
\bq
\begin{split}
x^n &= \frac{(n/q)\Gamma(n/q)}{\Gamma(n/q+1)} 
+ \frac{n}{q} \sum_{k=1}^\infty \frac{(-qa)^k}{k!} \prod_{u=1}^{k-1} (kp/q +n/q - u)
\\
&= 1+ \frac{n}{q} \sum_{k=1}^\infty \frac{(-qa)^k}{k!} \prod_{u=1}^{k-1} (kp/q +n/q - u) \,.
\end{split}
\eq
We need to perform the cancellations before setting $n=0$ on the right hand side.
To solve eq.~\eqref{eq:Ramanujan_trinom} using a Fuss--Catalan series (for the branch treated by Ramanujan), 
put $\zeta=x^q$, so $x=\zeta^{1/q}$, then
$\zeta = 1 - aq\zeta^{p/q}$.
Hence $\mu=p/q$ and $z = -qa$ in eq.~\eqref{eq:f_gkp}.
The solution is (using $k$ as a summation variable)
\bq
\begin{split}
x^n = \zeta^{n/q} &= \sum_{k=0}^\infty A_k(p/q,n/q)(-qa)^k 
\\
&= 1 + \frac{n}{q} \sum_{k=1}^\infty \frac{(-qa)^k}{k!} \prod_{u=1}^{k-1} (kp/q +n/q - u) \,.
\end{split}
\eq
This equals the expression in eq.~\eqref{eq:Ramanujan_soln}.
The series converges if and only if
\bq
\begin{split}
|a| &\le \frac{1}{|q|}\,\frac{1}{|p/q|^{p/q} |1-p/q|^{1-p/q}} \,,
\\
&= |p|^{-p/q} |p-q|^{(p-q)/q} \,.
\end{split}
\eq
This confirms the bound in eq.~\eqref{eq:Ramanujan_conv}.
\end{itemize}

\subsection{\label{sec:lamb} Lambert and Euler trinomial equations}
At stated above, in 1758 Lambert \cite{Lambert_1758} gave a series solution for the trinomial equation $x^m+px=q$
and later in 1770, Lambert \cite[\S 8]{Lambert_1770} revisited the equation in the form
\bq
\label{eq:lambert_tri}
x = q+x^m \,.
\eq
The treatment below follows Corless et al.~\cite{W_fcn_Corless_etal}.
In 1779 Euler \cite{Euler_1779} derived the following equation from Lambert's trinomial
(I have changed Euler's `$x$' to `$z$' to avoid confusion as to which equation $x$ satisfies)
\bq
\label{eq:euler_tri}
z^\alpha - z^\beta = (\alpha-\beta)v z^{\alpha+\beta} \,.
\eq
This is obtained from eq.~\eqref{eq:lambert_tri} 
via the substitutions
$x = z^{-\beta}$, $m=\alpha/\beta$ (this corrects a misprint in \cite{W_fcn_Corless_etal}, which stated $m=\alpha\beta$)
and $q=(\alpha-\beta)v$.
Euler's solution of eq.~\eqref{eq:euler_tri}, for $z^n$, was \cite{Euler_1779}
\bq
\label{eq:euler_soln}
\begin{split}
z^n = 1 + nv &+\frac{1}{2!}\,n(n+\alpha+\beta) \,v^2
\\
&+\frac{1}{3!}\,n(n+\alpha+2\beta)(n+2\alpha+\beta) \,v^3
\\
&+\frac{1}{4!}\,n(n+\alpha+3\beta)(n+2\alpha+2\beta)(n+3\alpha+\beta) \,v^4 +\cdots
\end{split}
\eq
Clearly, eq.~\eqref{eq:lambert_tri} can be solved using Fuss--Catalan series.
There are $m$ roots, of which one approaches 0 and $m-1$ approach the roots of unity as $q\to0$.
Lambert's solution is the unique branch which vanishes for $q=0$
and was displayed as a Fuss--Catalan series in eq.~\eqref{eq:lamb_soln}.
Euler's solution in eq.~\eqref{eq:euler_soln} does not vanish for $\alpha=\beta$, 
i.e.~$q=0$, and is the unique solution of eq.~\eqref{eq:lambert_tri} 
which is real (if $q$ is real) and approaches 1 as $q\to0$.
We can derive it as follows.
Divide eq.~\eqref{eq:lambert_tri} through by $x^m$ and set $x=\zeta^{-1/(m-1)}$, then
$\zeta = 1 + q\zeta^{m/(m-1)}$.
Hence $\mu = m/(m-1)$ and the solution is 
\bq
x = \mathcal{B}\Bigl(\frac{m}{m-1};\,-\frac{1}{m-1};\,q\Bigr) \,.
\eq
Put $x=z^{-\beta}$, $m=\alpha/\beta$ and $q=(\alpha-\beta)v$, then $z^n=x^{-n/\beta}$
\bq
\label{eq:lamb_fc}
z^n = \mathcal{B}\Bigl(\frac{\alpha}{\alpha-\beta};\,\frac{n}{\alpha-\beta};\,(\alpha-\beta)v\Bigr) \,.
\eq
Let us verify from eq.~\eqref{eq:euler_soln} that this equals Euler's solution:
\bq
\begin{split}
z^n &= 1 +n \sum_{t=1}^\infty \frac{v^t}{t!}\,\prod_{j=1}^{t-1}(t\alpha+n-j(\alpha-\beta)) 
\\
&= 1 +\frac{n}{\alpha-\beta} \sum_{t=1}^\infty \frac{(\alpha-\beta)^tv^t}{t!}
\,\prod_{j=1}^{t-1}\Bigl(\frac{t\alpha}{\alpha-\beta}+\frac{n}{\alpha-\beta}-j\Bigr) 
\\
&= \sum_{t=0}^\infty A_t\Bigl(\frac{\alpha}{\alpha-\beta},\,\frac{n}{\alpha-\beta}\Bigr)\,(\alpha-\beta)^tv^t 
\\
&= \mathcal{B}\Bigl(\frac{\alpha}{\alpha-\beta};\,\frac{n}{\alpha-\beta};\, (\alpha-\beta)v \Bigr) \,.
\end{split}
\eq
This agrees with eq.~\eqref{eq:lamb_fc}.
The paper by Corless et al.~\cite{W_fcn_Corless_etal} discusses various applications of the Lambert $W$ function,
which is the real solution (for real $x \ge -1/e$) of the equation $W(x)e^{W(x)}=x$,
but that is beyond the scope of this paper.

%\vfill\pagebreak
\setcounter{equation}{0}
\section{\label{sec:conv2} Algebraic equations: convergence of series I}
\subsection{General remarks}
A necessary and sufficient bound for the domain of absolute convergence is available 
for the important special case of
the solutions of algebraic equations by infinite series. 
We begin with some known theorems from the theory of power series in several complex variables.

\begin{definition}[multicircular or Reinhardt domain]
A multi-circular or Reinhardt domain in $\mathbb{C}^k$ has the property that 
for $k$ complex variables $\bm{z}=(z_1,\dots,z_k)$,
if a point $\bm{z}_*$ lies in the domain,
then so does every point $\bm{z}$ such that $|z_j|= |z_{*j}|$ for $j=1,\dots,k$.
A multi-circular domain with the property that if a point 
$\bm{z}_*$ lies in the domain,
then so does the polydisc given by $\{\bm{z} : |z_j| \le |z_{*j}|,\; j=1,\dots,k\}$
is known as a {\em complete Reinhardt domain}.
A polydisc is a Cartesian product of discs, in general with different radii.
\end{definition}
The convergence domain of a power series in multiple variables 
is a union of polydiscs centered at the origin and is a complete Reinhardt domain.
The following is also known. 
Using a vector notation, with coefficients $\bm{c}_{\bm{\alpha}}$
indexed by a $k$-tuple $\bm{\alpha}$,
if both $\sum_{\bm{\alpha}} |\bm{c}_{\bm{\alpha}} \bm{z}^{\bm{\alpha}}|$
and $\sum_{\bm{\alpha}} |\bm{c}_{\bm{\alpha}} \bm{w}^{\bm{\alpha}}|$
converge, then so does
$\sum_{\bm{\alpha}} |\bm{c}_{\bm{\alpha}}| |\bm{z}^{\bm{\alpha}}|^t |\bm{w}^{\bm{\alpha}}|^{1-t}$ for $0 \le t \le 1$.
This property of a Reinhardt domain is called {\em logarithmic convexity}.
Define a map $\textrm{Log}: (\mathbb{C}\setminus\{0\})^k \to \mathbb{R}^k$
where $z_j \mapsto \ln|z_j|$ for $j=1,\dots,k$.
Let the image of the domain of convergence 
$\mathcal{D}$ be $\textrm{Log}(\mathcal{D}) \subset \mathbb{R}^k$.
If $\textrm{Log}(z), \textrm{Log}(w) \in \textrm{Log}(\mathcal{D})$, then also
$t\textrm{Log}(z) +(1-t)\textrm{Log}(w) \in \textrm{Log}(\mathcal{D})$ for $0\le t \le 1$, i.e.
\bq
(t\ln|z_1| +(1-t)\ln|w_1|, \dots, t\ln|z_k| +(1-t)\ln|w_k|) \in \textrm{Log}(\mathcal{D}) \,. 
\eq
A complete Reinhardt domain in $\mathbb{C}^k$ is the domain of absolute convergence 
of a power series if and only if the domain is logarithmically convex.
The power series converges uniformly in every compact subset of the domain $\mathcal{D}$.
Note that logarithmic convexity does {\em not} imply convexity.
For $k=1$, the domain of convergence of a univariate power series is a disc in $\mathbb{C}$ centered on the origin, and is convex.
However, for $k\ge2$ variables, a complete Reinhardt domain in not in general convex.
However, from the foregoing remarks about polydiscs, the following is true.
If a point $\bm{z}_*$ lies in the domain of convergence, then so does every point on the ray joining the origin to $\bm{z}_*$,
i.e.~$\bm{z}=\lambda\bm{z}_*$ for $0 \le \lambda \le 1$.

The above theory is general.
In this section, we are concerned with the domain of absolute convergence of the series in eq.~\eqref{eq:pivot_pq},
which is the solution of eq.~\eqref{eq:algeq1}.
Passare and Tsikh \cite{Passare_Tsikh} claimed to offer a necessary and sufficient bound for absolute convergence in this case.
We summarize their work below.
We also display some counterexamples to their bound, 
and offer a more detailed analysis.
For ease of contact with the formalism in \cite{Passare_Tsikh}, we write
\bq
\label{eq:algeq_pt}
a_0 +a_1x +\cdots +x^p +\cdots +x^q +\cdots +a_nx^n = 0 \,.
\eq
This is effectively eq.~\eqref{eq:algeq1} (or eq.~\eqref{eq:algeq2}) where we have set $a_p=a_q=1$.
This is the equation treated in \cite{Passare_Tsikh}.
The series solution is given by eq.~\eqref{eq:pivot_pq}, with obvious changes of notation.
Passare and Tsikh employed the notation $[p]$ to denote that the index $p$ is excluded from a list of the form
$(\alpha_0,\alpha_1,\dots,[p],\dots,\alpha_n)$.
The solution of eq.~\eqref{eq:algeq_pt} is a power series in the $n-1$ variables 
$(a_0,a_1,\dots,[p],\dots,[q],\dots,a_n)$.
Passare and Tsikh studied the discriminant $\Delta_{pq}(a_0,a_1,\dots,[p],\dots,[q],\dots,a_n)$
which is the discriminant of the polynomial in eq.~\eqref{eq:algeq_pt}.
Then Passare and Tsikh \cite{Passare_Tsikh} claimed that the domain of absolute convergence $\mathcal{D}_{pq}$
of the series solution of eq.~\eqref{eq:algeq_pt}
is a complete Reinhardt domain 
whose boundary is (a segment of) the zero locus $\Delta_{pq}(a_0,a_1,\dots,[p],\dots,[q],\dots,a_n)=0$.
Specifically, for absolute convergence, they derived equations of the form
$\Delta_{pq}(\pm|a_0|,\dots,[p],\dots,[q],\dots,\pm|a_n|) = 0$.
See \cite[Thm.~3]{Passare_Tsikh} for a precise statement of their result.

\subsection{Application to cubics}
Passare and Tsikh employed their formalism to
display the domains of convergence for the series solutions of a cubic \cite[Sec.~5.2]{Passare_Tsikh}.
The general cubic equation with complex coefficients $(a_0,a_1,a_2,a_3)$ is
\bq
\label{eq:gencubic}
a_0 +a_1x +a_2x^2 +a_3x^3 = 0 \,.
\eq
There are six choices for $p$ and $q$, and the respective domains of convergence $\mathcal{D}_{pq}$ were given as follows
\cite[unnumbered before eq.~(17)]{Passare_Tsikh}
\begin{subequations}
\begin{align}
\label{eq:pt_cubic_d01}
\mathcal{D}_{01} &= \{ \Delta_{01}(|a_2|, -|a_3|) < 0 \}   \,,
\\
\label{eq:pt_cubic_d02}
\mathcal{D}_{02}^* &= \{ \Delta_{02}(|a_1|, |a_3|) > 0 \} \cap \{ \Delta_{02}(|a_1|, -|a_3|) < 0 \}  \,,
\\
\label{eq:pt_cubic_d03}
\mathcal{D}_{03} &= \{ \Delta_{03}(-|a_1|, -|a_2|) > 0 \}   \,,
\\
\label{eq:pt_cubic_d12}
\mathcal{D}_{12} &= \{ \Delta_{12}(|a_0|, -|a_3|) < 0 \} \cap \{ \Delta_{12}(-|a_0|, |a_3|) < 0 \}  \,,
\\
\label{eq:pt_cubic_d13}
\mathcal{D}_{13}^* &= \{ \Delta_{13}(|a_0|, |a_2|) > 0 \} \cap \{ \Delta_{13}(|a_0|, -|a_2|) < 0 \}  \,,
\\
\label{eq:pt_cubic_d23}
\mathcal{D}_{23}^* &= \{ \Delta_{23}(|a_0|, -|a_1|) < 0 \}   \,.
\end{align}
\end{subequations}
Three of the above six cases, marked with asterisks, are wrong.
The cases $\mathcal{D}_{02}$ and $\mathcal{D}_{13}$ contain fundamental errors, while $\mathcal{D}_{23}$ can be explained as a misprint.
I have attempted to resolve these issues privately with Tsikh,
but regrettably have not received a reply of scientific substance.
(Passare is deceased.)
We begin with the case $p=0$ and $q=2$. 
The relevant cubic equation is 
\bq
\label{eq:cubic_d02_contra}
1 +a_1x +x^2 +a_3x^3 = 0 \,.
\eq 
Setting $a_3=0$ in eq.~\eqref{eq:pt_cubic_d02} yields the self-contradictory conditions
\bq
\Delta_{02}(|a_1|, 0) > 0 \qquad \textrm{and} \qquad \Delta_{02}(|a_1|, 0) < 0 \,.
\eq
These conditions imply that for $a_3=0$, the series does not converge for any $a_1$, 
and in particular the origin $(a_1,a_3)=(0,0)$ is not in the domain of convergence, 
which is false.
For $a_3=0$, eq.~\eqref{eq:cubic_d02_contra} reduces to the quadratic $1 +a_1x +x^2 =0$
and the series solution converges for $4-|a_1|^2>0$ or $|a_1|<2$.
We now show that the error in eq.~\eqref{eq:pt_cubic_d02} 
is fundamental and cannot be explained as a misprint in \cite{Passare_Tsikh}.
\begin{itemize}
\item
First, the expressions for the discriminants are, for all four $\pm$ sign assignments $(\pm|a_1|,\pm|a_3|)$,
\begin{subequations}
\label{eq:delta02_cubic_pt}
\begin{align}
\Delta_{02}(|a_1|,|a_3|) &= 27|a_3|^2 +4|a_1|^3|a_3| +4 -18|a_1||a_3| -|a_1|^2 \,,
\\
\Delta_{02}(|a_1|,-|a_3|) &= 27|a_3|^2 -4|a_1|^3|a_3| +4 +18|a_1||a_3| -|a_1|^2 \,.
\\
\Delta_{02}(-|a_1|,|a_3|) &= \Delta_{02}(|a_1|,-|a_3|) \,,
\\
\Delta_{02}(-|a_1|,-|a_3|) &= \Delta_{02}(|a_1|,|a_3|) \,.
\end{align}
\end{subequations}
Hence there are only two independent expressions, 
viz.~$\Delta_{02}(|a_1|,|a_3|)$ and $\Delta_{02}(|a_1|,-|a_3|)$.
Hence the problem with eq.~\eqref{eq:pt_cubic_d02}
cannot be explained as a misprint in the assignment of $\pm$ signs for $\pm|a_1|$ and/or $\pm|a_3|$.

\item
Putting $a_1=a_3=0$ yields $\Delta_{02}(0,0) = 4$, i.e.~a positive number.
Let us therefore {\em tentatively} reverse the second inequality in eq.~\eqref{eq:pt_cubic_d02} as follows
\bq
\label{eq:pt_cubic_d02_contra_rev}
\mathcal{D}_{02} \ =?\ \{\Delta_{02}(|a_1|,|a_3|)>0\} \cap \{\Delta_{02}(|a_1|,-|a_3|)>0\} \,.
\eq
Putting $a_1=0$ yields
$\Delta_{02}(0,|a_3|) = \Delta_{02}(0,-|a_3|) = 27|a_3|^2 +4$,
i.e.~both expressions are equal and positive definite,
so eq.~\eqref{eq:pt_cubic_d02_contra_rev} is satisfied for all $a_3$. 
Hence, if eq.~\eqref{eq:pt_cubic_d02_contra_rev} is taken seriously,
it implies that for $a_1=0$,
the series solution of eq.~\eqref{eq:cubic_d02_contra}
converges absolutely for all $a_3$.
We know this is false. 
If $a_1=0$, then eq.~\eqref{eq:cubic_d02_contra} reduces to the following trinomial equation
$1 +x^2 +a_3x^3 = 0$.
We proved in Sec.~\ref{sec:tri}
that for such a situation the series solution converges absolutely for $|a_3|^2 \le 4/27$.
\end{itemize}

Hence there is no assignment of $\pm$ signs for $\pm|a_1|$ and/or $\pm|a_3|$,
nor any reversal of the inequalities in eq.~\eqref{eq:pt_cubic_d02},
which leads to a correct formula for the domain of convergence $\mathcal{D}_{02}$.
The error in eq.~\eqref{eq:pt_cubic_d02} cannot be explained as a misprint in \cite{Passare_Tsikh}.
A more careful treatment is therefore required, and will be presented in the next section.
We shall also deal with the other cases $\mathcal{D}_{13}$ and $\mathcal{D}_{23}$ in Sec.~\ref{sec:conv3}.

%\vfill\pagebreak
\setcounter{equation}{0}
\section{\label{sec:conv3} Algebraic equations: convergence of series II}
\subsection{Revised formalism}
We present a more careful analysis of the problem of the domain of absolute convergence 
of the series solution of an algebraic equation below.
To make the exposition self-contained, we begin from scratch,
although we shall attempt to minimize repetition of material already presented earlier in this paper.
The original polynomial is
\bq
\label{eq:orig_px}
\mathscr{P}(x) = a_0 +a_1 x + \cdots +a_n x^n \,.
\eq
For the purposes of determining domains of convergence,
we assume all the coefficients are nonzero in general.
The specialization to cases such as a trinomial is obvious.
We fix two integers $p$ and $q$ such that $0 \le p < q \le n$
and derive a transformed polynomial,
whose roots are proportional to those of $\mathscr{P}(x)$.
Employing (nonzero) constants $\lambda$ and $\mu$, where $x = \mu y$,
we obtain 
\bq
\mathscr{P}_{pq}(y) \equiv 
\lambda \mathscr{P}(\mu y) =
b_0 +b_1 y +\cdots +y^p +\cdots +y^q +\cdots +b_n y^n \,.
\eq
Here $b_j = a_j/(a_p^{1-\mu_j}a_q^{\mu_j})$,
where $\mu_j = (j-p)/(q-p)$.
Then $b_p=b_q=1$, by construction.
For brevity below, we define the tuples $\bm{a}=(a_0,\dots,a_n)$
and $\bm{b}=(b_0,\dots,[p],\dots,[q],\dots,b_n)$.
Then $\bm{a}$ and $\bm{b}$ contain respectively $n+1$ and $n-1$ components.
We solve for a root $y_m$ where $\mathscr{P}_{pq}(y_m)=0$.
Note that $y_m$ also depends on $p$ and $q$, but we omit this for brevity.
We express $y_m$ as a multivariate power series in the $n-1$ scaled coefficients $b_j$, 
where $j\in\mathscr{N}_{npq}$.
Recall $\mathscr{N}_{npq} = \{0,1,\dots,n\} \setminus \{p,q\}$.
We saw previously that this procedure yields $q-p$ roots,
so $m=0,\dots,q-p-1$.
We know the domain of absolute convergence 
of the resulting power series for $y_m$ includes 
a nonempty open neighborhood of the origin
$\bm{0}_{pq}$, where $b_j=0$ for all $j\in\mathscr{N}_{npq}$.
We also know the domain of absolute convergence 
is a complete Reinhardt domain and depends only on the amplitudes $|b_j|$,
i.e.~the domain is the same for all the $q-p$ roots $y_m$.

Next note that just as the original polynomial $\mathscr{P}(x)$ 
can always be transformed to $\mathscr{P}_{pq}(y)$, 
the same procedure also transforms the discriminant
$\Delta(\bm{a})$ of $\mathscr{P}(x)$ 
to the scaled discriminant $\Delta_{pq}(\bm{b})$ of $\mathscr{P}_{pq}(y)$.
Then $\Delta_{pq}(\bm{b}) = \lambda^{2(n-1)}\mu^{n(n-1)}\Delta(\bm{a})$.
Thus far, the argument is correct.
It is, however, {\em false} to conclude 
that the boundary of the domain of absolute convergence 
is determined by the scaled discriminants given by
$\Delta_{pq}(\pm|b_0|,\dots,[p],\dots,[q],\dots,\pm|b_n|)$,
specifically, by solving for the hypersurfaces given by
$\Delta_{pq}(\pm|b_0|,\dots,[p],\dots,[q],\dots,\pm|b_n|)=0$.
For example, we saw above that this led to erroneous results 
for the domains of convergence for the series solutions of a cubic.

The weak point is that there are additional discriminants, 
which are also required to determine the boundary of the domain of absolute convergence.
To see this, let us review the key steps.
We employ fresh notation to avoid confusion with the above symbols.
For brevity below, define an $(n-1)$-tuple of $\pm$ signs 
\bq
\label{eq:sigma_tuple}
\bm{\sigma} = (\sigma_0,\dots,[p],\dots,[q],\dots,\sigma_n) \,.
\eq
Here $\sigma_j=\pm1$ for $j\in\mathscr{N}_{npq}$.
The dependence of $\bm{\sigma}$ and $\sigma_j$ on $p$ and $q$ is taken as understood.
We also define the set $\Sigma_{pq}$ of all the distinct tuples $\bm{\sigma}$.
Then $\Sigma_{pq}$ has cardinality $2^{n-1}$.
The power series solutions for the roots of the following $2^{n+1}$ 
algebraic equations all have the same domain of absolute convergence
\bq
\sigma_0|b_0| +\sigma_1|b_1|y + \cdots \pm y^p \cdots \pm y^q + \cdots +\sigma_n |b_n| y^n = 0 \,.
\eq
The coefficient of $y^j$ is permitted to be $\pm|b_j|$ only.
We can always divide through by $-1$, if necessary,
so that the coefficient of $y^p$ is unity.
This yields $2^n$ distinct equations.
The discriminant of the associated polynomial is
$\Delta(\sigma_0|b_0|,\cdots,1,\cdots,\pm1,\cdots,\sigma_n|b_n|)$.
Let us now define the following two families (or sets) of discriminants.
We employ the symbol $\Psi$ to avoid confusion with $\Delta_{pq}$ above.
Then, with $1$ in the $p^{th}$ slot and $\pm1$ in the $q^{th}$ slot, we define
\begin{subequations}
\begin{align}
\Psi^+_{pq}(\bm{b},\bm{\sigma}) &= 
\Delta(\sigma_0|b_0|,\cdots,1,\cdots,\phantom{-}1,\cdots,\sigma_n|b_n|) \,,
\\
\Psi^-_{pq}(\bm{b},\bm{\sigma}) &= 
\Delta(\sigma_0|b_0|,\cdots,1,\cdots,-1,\cdots,\sigma_n|b_n|) \,,
\\
\Psi^+_{pq}(\bm{b}) &= \{
\Psi^+_{pq}(\bm{b},\bm{\sigma})\,\vert\, \bm{\sigma}\in\Sigma_{pq} \} \,,
\\
\Psi^-_{pq}(\bm{b}) &= \{
\Psi^-_{pq}(\bm{b},\bm{\sigma})\,\vert\, \bm{\sigma}\in\Sigma_{pq} \} \,.
\end{align}
\end{subequations}
Each set has at most $2^{n-1}$ distinct elements.
The following lemma shows that the family $\Psi_{pq}^-(\bm{b})$ is nontrivial.

\begin{lemma}
If $q-p$ is odd, the sets $\Psi^+_{pq}(\bm{b})$ and $\Psi^-_{pq}(\bm{b})$ are identical.
If $q-p$ is even, the sets $\Psi^+_{pq}(\bm{b})$ and $\Psi^-_{pq}(\bm{b})$ are disjoint.
\end{lemma}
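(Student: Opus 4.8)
The plan is to reduce the whole statement to the elementary transformation laws of the discriminant under rescalings of the variable, and then to convert the set equality/disjointness into a single linear-algebra question over $\mathbb{F}_2$.

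First I would record the two scaling laws. Writing $\Delta(c_0,\dots,c_n)$ for the discriminant of $\sum_j c_jy^j$, the substitution $y\mapsto\zeta y$ sends $c_j\mapsto\zeta^jc_j$ and multiplies $\Delta$ by $\zeta^{n(n-1)}$, while $\mathscr{P}\mapsto\lambda\mathscr{P}$ sends $c_j\mapsto\lambda c_j$ and multiplies $\Delta$ by $\lambda^{2n-2}$. Taking $\zeta=-1$ and $\lambda=(-1)^p$ leaves $\Delta$ invariant, since both $n(n-1)$ and $p(2n-2)$ are even. Under this combined map the $p$-slot stays $1$, the $q$-slot becomes $(-1)^{q-p}$, and a free slot $\sigma_j$ becomes $(-1)^{p+j}\sigma_j$. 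Hence, when $q-p$ is odd, $\Psi^+_{pq}(\bm b,\bm\sigma)=\Psi^-_{pq}(\bm b,\bm\sigma')$ with $\sigma'_j=(-1)^{p+j}\sigma_j$; since $\bm\sigma\mapsto\bm\sigma'$ is a bijection of $\Sigma_{pq}$, the two sets coincide. This disposes of the odd case, and it also shows that in the even case the same map keeps $\Psi^+_{pq}$ inside $\Psi^+_{pq}$, so no single rescaling can identify the two families.

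For the even case I would compare the two families monomial by monomial. The key observation is that $\Delta$ is homogeneous of degree $2n-2$ and weighted-homogeneous of degree $n(n-1)$ with $\mathrm{wt}(c_j)=j$; fixing the exponents $\bm e=(e_j)_{j\in\mathscr{N}_{npq}}$ of the free coefficients therefore pins down the pair $(e_p,e_q)$ uniquely, because $p\neq q$ makes the two homogeneity equations solvable. Consequently, after setting $c_p=1$ and $c_q=s$, the coefficient of $\prod_{j}|b_j|^{e_j}$ in $\Delta$ is $\gamma_{\bm e}\,s^{e_q(\bm e)}$ for a single integer $e_q(\bm e)=e_q(\bm 0)-\sum_j\mu_je_j$ (with $\mu_j=(j-p)/(q-p)$ as above) and a constant $\gamma_{\bm e}$. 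Comparing $\Psi^+_{pq}(\bm b,\bm\sigma)$ (where $s=+1$) with $\Psi^-_{pq}(\bm b,\bm\tau)$ (where $s=-1$) then forces, for every $\bm e$ with $\gamma_{\bm e}\neq0$,
\[
\sum_{j\in\mathscr{N}_{npq}}e_j(s_j+t_j)\equiv e_q(\bm e)\pmod 2 ,
\]
where $\sigma_j=(-1)^{s_j}$ and $\tau_j=(-1)^{t_j}$. This condition depends on $\bm\sigma,\bm\tau$ only through $\bm u=(s_j+t_j)_j\in\mathbb{F}_2^{\,n-1}$, so solvability of this affine $\mathbb{F}_2$-system is equivalent to $\Psi^+_{pq}(\bm b)=\Psi^-_{pq}(\bm b)$, and unsolvability is equivalent to disjointness; in particular the dichotomy ``identical or disjoint'' is automatic, and it only remains to prove unsolvability when $q-p$ is even. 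Here I would exhibit a distinguishing monomial: on the locus where only $c_0,c_p,c_q$ are nonzero the polynomial is the trinomial $c_0+y^p+sy^q$, whose discriminant carries a factor $s^{q-p-1}$ from the discriminant of $sy^{q-p}+1$ (its roots being the $(q-p)$-th roots of $-1/s$). For $q-p$ even this exponent is odd, making $e_q$ odd on the corresponding $\bm e$, and one checks this is incompatible with the equation coming from $\bm e=\bm 0$ (available when $p\le1$, where $e_q(\bm 0)$ is odd).

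The hard part will be the subcase $p\ge2$ with $p$ even. There the $\bm e=\bm 0$ term vanishes, because the binomial $y^p\pm y^q$ has a multiple root at the origin, and the lowest surviving trinomial monomial carries $e_0=p-1$ even, so its parity equation can be absorbed by choosing $u_0$. To close this case I expect to need a second monomial with nonzero discriminant coefficient---for instance one also involving $c_1$---and to verify that the two resulting parity constraints are jointly inconsistent. Thus the genuine obstacle is controlling which coefficients $\gamma_{\bm e}$ are nonzero, i.e.\ the support of the discriminant modulo $2$; once a distinguishing $\bm e$ (or pair of them) with odd $e_q$ is secured for every parity of $p$, everything else is bookkeeping with the two scaling laws.
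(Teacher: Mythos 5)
Your odd case is correct and is essentially the paper's own argument: the paper also works with the two permitted transformations $y\mapsto -y$ and $P\mapsto -P$, split into the subcases ($p$ even, $q$ odd) and ($p$ odd, $q$ even), and your single choice $\zeta=-1$, $\lambda=(-1)^p$ merely packages those two subcases into one computation, together with the (correct) observation that $\bm{\sigma}\mapsto\bm{\sigma}'$ permutes $\Sigma_{pq}$.

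The even case is where the proposal breaks down, and you acknowledge as much. The paper's disjointness argument is purely transformational: it checks that $\pm P_\pm(\pm y,\bm{\sigma})$ never carries a $+$-type polynomial to a $-$-type one when $q-p$ is even, and stops there. Your route is genuinely different: you compare the two families coefficient-by-coefficient, using the bihomogeneity of the discriminant to pin down $(e_p,e_q)$ from the free exponents $\bm{e}$, and you reduce equality of $\Psi^+_{pq}(\cdot,\bm{\sigma})$ and $\Psi^-_{pq}(\cdot,\bm{\tau})$ to an affine system over $\mathbb{F}_2$ in $\bm{u}=\bm{s}+\bm{t}$. That reduction is sound, and the resulting automatic dichotomy ``identical or disjoint'' is a real gain in precision. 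But the proof then stands or falls on exhibiting, for every $n$, $p$, $q$ with $q-p$ even, a monomial in the \emph{support} of the discriminant whose parity constraint is unsatisfiable, and this you do not do. The $\bm{e}=\bm{0}$ constraint is only available when the pure $c_p^\alpha c_q^\beta$ term actually occurs (case (ii) of the paper's Lemma on $\Psi^\pm_{pq}$ at the origin), and even then your parenthetical claim that $e_q(\bm{0})=\beta=(n-1)(n-2p)/(q-p)$ is odd for $p\le 1$ is false in general (e.g.\ $n=4$, $p=0$, $q=2$ gives $\beta=6$). Your candidate distinguishing monomial is extracted from the discriminant of the trinomial $c_0+y^p+sy^q$, but restricting the degree-$n$ discriminant to a locus where the top coefficient vanishes is not the discriminant of the restricted lower-degree polynomial, so even that computation needs repair when $q<n$. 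Finally, the subcase you defer ($p\ge 2$ even, discriminant vanishing at the origin) requires knowing that a second suitable monomial has $\gamma_{\bm{e}}\ne 0$; controlling the support of the discriminant is governed by its Newton polytope and is precisely the hard content, not bookkeeping. As written, the disjointness half of the lemma is not proved.
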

\begin{proof}
Introduce two additional tuples
$\bm{\sigma}^\prime$ where $\sigma_j^\prime = (-1)^j\sigma_j$
and $-\bm{\sigma}^\prime$ 
where obviously the components are $-(-1)^j\sigma_j$.
Then consider the polynomials
\bq
\label{eq:algeq_proof_p0sig}
P_\pm(y,\bm{\sigma}) = \sigma_0|b_0| +\sigma_1|b_1| y 
+\cdots +y^p +\cdots \pm y^q +\cdots +\sigma_n|b_n|y^n \,.
\eq
The only permitted transformations of $P_+$ and $P_-$ are 
to reverse the sign of $y$ and/or to multiply $P_\pm$ by $-1$,
because the coefficient of $y^j$ must be $\pm|b_j|$ only.
By construction, the discriminant of 
$P_+(y,\bm{\sigma})$ is an element of $\Psi^+_{pq}(\bm{b})$
and that of $P_-(y,\bm{\sigma})$ is an element of $\Psi^-_{pq}(\bm{b})$.
First suppose $q-p$ is odd.
If $p$ is even and $q$ is odd, then
\bq
\begin{split}
P_\pm(-y,\bm{\sigma}) &= 
\sigma_0|b_0| -\sigma_1|b_1| y +\cdots +y^p +\cdots \mp y^q +\cdots +(-1)^n\sigma_n|b_n|y^n 
\\
&= P_\mp(y,\bm{\sigma}^\prime) \,.
\end{split}
\eq
If $p$ is odd and $q$ is even, then
\bq
\begin{split}
-P_\pm(-y,\bm{\sigma}) &= 
-\sigma_0|b_0| +\sigma_1|b_1| y +\cdots +y^p +\cdots \mp y^q +\cdots +(-1)^{n+1}\sigma_n|b_n|y^n 
\\
&= P_\mp(y,-\bm{\sigma}^\prime) \,.
\end{split}
\eq
Cycling through all values of $\bm{\sigma}$ shows that the sets 
$\Psi^+_{pq}(\bm{b})$ and $\Psi^-_{pq}(\bm{b})$ are identical, if $q-p$ is odd.
Now suppose $q-p$ is even.
Suppose $p$ and $q$ are both even.
Then
\bq
\begin{split}
P_\pm(-y,\bm{\sigma}) &= 
\sigma_0|b_0| -\sigma_1|b_1| y +\cdots +y^p +\cdots \pm y^q +\cdots +(-1)^n\sigma_n|b_n|y^n 
\\
&= P_\pm(y,\bm{\sigma}^\prime) \,.
\end{split}
\eq
The discriminant of $P_+(-y,\bm{\sigma})$ is an element of $\Psi^+_{pq}(\bm{b})$.
The other transformations $-P_+(y,\bm{\sigma})$ and $-P_+(-y,\bm{\sigma})$
also fail to yield a polynomial with a discriminant which is an element of $\Psi^-_{pq}(\bm{b})$.
Similarly the discriminant of $\pm P_-(\pm y,\bm{\sigma})$ is always an element of $\Psi^-_{pq}(\bm{b})$.
Next suppose $p$ and $q$ are both odd.
Then
\bq
\begin{split}
-P_\pm(-y,\bm{\sigma}) &= 
-\sigma_0|b_0| +\sigma_1|b_1| y +\cdots +y^p +\cdots \pm y^q +\cdots +(-1)^{n+1}\sigma_n|b_n|y^n 
\\
&= P_\pm(y,-\bm{\sigma}^\prime) \,.
\end{split}
\eq
As was the case when $p$ and $q$ were both even,
the discriminant of $\pm P_+(\pm y,\bm{\sigma})$ is always an element of $\Psi^+_{pq}(\bm{b})$
and the discriminant of $\pm P_-(\pm y,\bm{\sigma})$ is always an element of $\Psi^-_{pq}(\bm{b})$.
Hence if $q-p$ is even, 
the sets $\Psi^+_{pq}(\bm{b})$ and $\Psi^-_{pq}(\bm{b})$ are disjoint.
\end{proof}
Armed with this additional information, we return to 
eq.~\eqref{eq:cubic_d02_contra}
and the case $p=0$ and $q=2$.
There are {\em four} distinct discriminants
which can contribute to the boundary of the domain of convergence, viz.
\begin{subequations}
\begin{align}
\Psi^+_{02}(|a_1|,|a_3|) &= 27|a_3|^2 +4|a_1|^3|a_3| +4 -18|a_1||a_3| -|a_1|^2 \,,
\\
\Psi^+_{02}(|a_1|,-|a_3|) &= 27|a_3|^2 -4|a_1|^3|a_3| +4 +18|a_1||a_3| -|a_1|^2 \,,
\\
\Psi^-_{02}(|a_1|,|a_3|) &= 27|a_3|^2 +4|a_1|^3|a_3| -4 +18|a_1||a_3| -|a_1|^2 \,,
\\
\Psi^-_{02}(|a_1|,-|a_3|) &= 27|a_3|^2 -4|a_1|^3|a_3| -4 -18|a_1||a_3| -|a_1|^2 \,.
\end{align}
\end{subequations}
The expressions for
$\Psi^+_{02}(|a_1|,\pm|a_3|)$
are the same as for
$\Delta_{02}(|a_1|,\pm|a_3|)$ in
eq.~\eqref{eq:delta02_cubic_pt}.
Observe that $\Psi^+_{02}(0,0)=4$ and $\Psi^-_{02}(0,0)=-4$.
If we set $|a_1|=0$ then
$\Psi^-_{02}(0,\pm|a_3|) = 27|a_3|^2 -4$,
which yields the correct upper bound for $|a_3|$.
If we set $|a_3|=0$ then
$\Psi^+_{02}(|a_1|,0) = 4 -|a_1|^2$,
which yields the correct upper bound for $|a_1|$.
The correct answer requires {\em both} $\Psi^+_{02}$ and $\Psi^-_{02}$.
Some further algebra yields the correct expression for the domain of convergence to be
$\mathcal{D}_{02} = \{ \Psi^+_{02}(|a_1|,|a_3|) \ge 0 \} \cap
\{ \Psi^-_{02}(|a_1|,|a_3|) \le 0 \}$.
Note that the series converges on the boundary of its domain of convergence.

The corrected expressions for the domains of absolute convergence
for the series solutions for the roots of a cubic as follows.
For clarity, we distinguish between the coefficients $a_0,\dots,a_3$
of the original cubic in eq.~\eqref{eq:gencubic}
and the coefficients $b_j$ in the scaled polynomial
$\mathscr{P}_{pq}(y)$. 
Recall that `$b_j$' depends also on $p$ and $q$ but this is considered to be understood.
The domains of convergence $\mathcal{D}_{pq}$ are given by
\begin{subequations}
\label{eq:pt_cubic_all_me}
\begin{align}
\label{eq:pt_cubic_d01_me}
\mathcal{D}_{01} &= \{ \Psi^+_{01}(|b_2|, -|b_3|) \le 0 \} \,,
\\
\label{eq:pt_cubic_d02_me}
\mathcal{D}_{02} &= \{ \Psi^+_{02}(|b_1|, |b_3|) \ge 0 \} \cap \{ \Psi^-_{02}(|b_1|, |b_3|) \le 0 \}  \,,
\\
\label{eq:pt_cubic_d03_me}
\mathcal{D}_{03} &= \{ \Psi^+_{03}(-|b_1|, -|b_2|) \ge 0 \} \,,
\\
\label{eq:pt_cubic_d12_me}
\mathcal{D}_{12} &= \{ \Psi^+_{12}(|b_0|, -|b_3|) \le 0 \} \cap \{ \Psi^+_{12}(-|b_0|, |b_3|) \le 0 \} \,,
\\
\label{eq:pt_cubic_d13_me}
\mathcal{D}_{13} &= \{ \Psi^+_{13}(|b_0|, |b_2|) \ge 0 \} \cap \{ \Psi^-_{13}(|b_0|, |b_2|) \le 0 \} \,,
\\
\label{eq:pt_cubic_d23_me}
\mathcal{D}_{23} &= \{ \Psi^+_{23}(-|b_0|, |b_1|) \le 0 \} \,.
\end{align}
\end{subequations}
The series converge on the boundaries of their respective domains of convergence.
For the case $\mathcal{D}_{23}$, note that $q-p=1$ is odd,
hence $\Psi^+_{pq}(\bm{b})$ and $\Psi^-_{pq}(\bm{b})$ are identical,
so the solution is expressed using purely $\Psi^+_{23}(\bm{b})$.
We can therefore consider the expression in
eq.~\eqref{eq:pt_cubic_d23} to be a misprint.
For the cases $\mathcal{D}_{01}$ and $\mathcal{D}_{13}$, where $q-p=2$ is even,
the need for $\Psi^-_{pq}(\bm{b})$ is essential.

There is an additional caveat,
which is that the formula for the domain of convergence
cannot always be expressed using purely inequalities.
Consider the quartic equation with no term in $x^3$
\bq
\label{eq:quartic_caveat}
a_0 +a_1 x +x^2 +x^4 = 0 \,.
\eq
We choose $p=2$ and $q=4$ and we have set $a_2=a_4=1$ so that 
the scaled coefficients are simply $b_j=a_j$.
The boundary of the domain of convergence in this case is 
determined solely by $\Psi^-_{24}(-|a_0|,|a_1|)$.
The derivation is omitted.
Then
\bq
\Psi_{24}^-(-|a_0|,|a_1|) = 16|a_0|(1-4|a_0|)^2 +4(1-36|a_0|)|a_1|^2 - 27|a_1|^4 \,.
\eq
\begin{itemize}
\item
The discriminant vanishes at the origin: $\Psi_{24}^-(0,0) = 0$.
The significance of this will be discussed below.
For now we seek nonzero solutions of the equation
$\Psi_{24}^-(-|a_0|,|a_1|) =0$.

\item
Put $a_1=0$, then $\Psi^-_{24}(-|a_0|,0) = 16|a_0|(1-4|a_0|)^2$.
This is (proportional to) a perfect square, which equals zero at $|a_0|=\frac14$.
The necessary upper bound on $|a_0|$ for this problem is known to be $|a_0|\le \frac14$.

\item
Next put $a_0=0$, then $\Psi^-_{24}(0,|a_1|) = (4 -27|a_1|^2)|a_1|^2$.
For nonzero $a_1$, this vanishes at $|a_1|=\sqrt{4/27}$.
The necessary upper bound on $|a_1|$ for this problem is known to be $|a_1|\le \sqrt{4/27}$.

\item
Next let us put $|a_0|=a$ and $|a_1|=\frac12 a$, where $a\in\mathbb{R}_+$.
The graph of $\Psi^-_{24}(-a,\frac12 a)$ is plotted against $a$ in Fig.~\ref{fig:ray}.
As the value of $a$ increases from zero,
initially $\Psi^-_{24}(0,0)=0$ for $a=0$,
then the value of $\Psi^-_{24}(-a,\frac12 a)$ is positive, reaches a maximum, 
then it changes sign and becomes negative, 
reaches a minimum and then becomes positive again and increases to $+\infty$ thereafter.
The value of $\Psi^-_{24}(-a,\frac12 a)$ is thus {\em not} monotonic in $a$.

\item
All of the above facts demonstrate that an unconditional inequality $\Psi^-_{24}(-|a_0|,|a_1|)\ge0$
is insufficient to determine the domain of convergence.
First, the discriminant vanishes at the origin.
We need to exclude the origin as a solution, because we know the domain of convergence has positive measure.
Even after doing so, we require an additional stipulation
``the domain of convergence includes only the component 
which satisfies the inequality and is connected to the origin.''

\item
It is implicit in \cite[Thm.~3]{Passare_Tsikh}
that the domain of convergence includes only the component
connected to the origin.
What is {\em not} clear is that the formula for the domain of convergence 
{\em cannot} always be expressed using only unconditional inequalities on the values of the discriminants.
The stipulation ``the component connected to the origin'' is necessary.

\item
We remark in passing that for this problem, 
the domain of convergence is determined solely by a discriminant of the form $\Psi^-_{pq}$.
A discriminant of the form $\Psi^+_{pq}$,
i.e.~$\Delta_{pq}$ in the formalism in \cite{Passare_Tsikh}, does not appear.
\end{itemize}
One source of the difficulty is that if $a_1=0$ in eq.~\eqref{eq:quartic_caveat},
it becomes an algebraic equation in $x^2$, viz.~$a_0 +x^2 +x^4 = 0$.
For a polynomial of degree $n$ such as in eq.~\eqref{eq:orig_px}, 
the discriminant of $\mathscr{P}(x^m)$, for a positive integer $m$, is given by
\bq
\Delta(\mathscr{P}(x^m)) = (-1)^{nm(m-1)/2} m^{mn} (a_0a_n)^{m-1} \bigl(\Delta(\mathscr{P}(x))\bigr)^m \,.
\eq
Hence if $m$ is even, the discriminant $\Delta(\mathscr{P}(x^m))$
will not change sign as the (absolute values of the) coefficients are varied.
Hence, in general, an unconditional inequality on the value of the discriminant(s) 
is insufficient to determine the domain of convergence.
This feature will occur generically
(or at least, cannot be ruled out)
for a quartic and algebraic equations of all higher degrees,
for example if the coefficients of all the odd powers of $x$ are set to zero.

\subsection{General formula}
We have seen that the formalism in \cite{Passare_Tsikh}
must be augmented by the inclusion of an extra set of discriminants. 
Although this yields the correct result for a cubic,
as in eq.~\eqref{eq:pt_cubic_all_me},
the procedure in \cite{Passare_Tsikh}
becomes tedious for polynomials of high degree,
and we have seen that it is prone to error.
We seek a procedure that yields a single `general formula' valid for arbitrary $n$,
which is simpler to state and to compute, for practical work.
This can be accomplished via the use of hyperplanes and foliations, as will be explained below.
(N.B.~the word `single' was employed informally above; we shall require at least two formulas.)

Still speaking informally, given an algebraic equation of degree $n$ with a coefficient tuple $\bm{a}$
and a choice for $p$ and $q$, hence a scaled tuple $\bm{b}$,
the equations $\Psi^+_{pq}(\bm{b},\bm{\sigma})=0$ and $\Psi^-_{pq}(\bm{b},\bm{\sigma})=0$,
taken over all $\bm{\sigma}\in\Sigma_{pq}$, 
specify a set of hyperplanes in the amplitudes $|b_j|$.
The domain of convergence in $\mathbb{R}_+^{n-1}$ 
is given by the set of hyperplanes closest to the origin and 
which together bound a region which is connected to the origin $\bm{0}_{pq}$.
The domain of convergence for $\bm{b}\in\mathbb{C}^{n-1}$ is the inverse image of the above domain in $\mathbb{R}_+^{n-1}$.
The domain of absolute convergence is clearly unique.
If there were two or more sets of such hyperplanes,
the full domain of absolute convergence would simply be the union of the individual domains.
However, one reason the above discussion is informal is that we saw that the discriminant can vanish at the origin.
Hence to write an equation such as `$\Psi^\pm_{pq}(\bm{b},\bm{\sigma})=0$' is not precise enough for our needs.

We now sharpen the above ideas.
Clearly, the domain of absolute convergence is determined solely by the amplitudes $|b_j|$, $j\in\mathscr{N}_{npq}$.
We previously denoted the doman of absolute convergence by $\mathcal{D}$ 
and introduced its image $\textrm{Log}(\mathcal{D})$.
Here we define a second image via an `amplitude map' $\mathbb{C}^k \to \mathbb{R}_+^k$ 
where $z_j \mapsto |z_j|$ for $j=1,\dots,k$:
\bq
\mathscr{D} = \{ (|z_1|,\dots,|z_k|) \,\vert\, \bm{z} \in\mathcal{D} \} \,.
\eq
From the previous discussion of polydiscs and a complete Reinhardt domain, 
$\bm{z} \in\mathcal{D}$ if and only if $(|z_1|,\dots,|z_k|) \in \mathscr{D}$.
Clearly also $\textrm{Log}(\mathscr{D}) = \textrm{Log}(\mathcal{D})$.
Our interest is the case of an algebraic equation of degree $n$,
so $k=n-1$ and $\bm{z}=\bm{b}$.
We shall derive a formula to determine the domain $\mathscr{D}$ in this case.
Obviously $\bm{0}_{pq}\in\mathscr{D}$.
Recall that for an algebraic equation of degree $n$ and fixed $p,q$,
then $\mu_j = (j-p)/(q-p)$.
From eq.~\eqref{eq:conv_nec_j_vertex}, let us define, for algebraic equations,
\bq 
\hat{b}_j = \frac{1}{|\mu_j|^{\mu_j} |1-\mu_j|^{1-\mu_j}} \,.
\eq
Recall one must have $|b_j| \le \hat{b}_j$ for $j\in\mathscr{N}_{npq}$.
It follows that $\mathscr{D} \subset \mathscr{\hat{D}}$ where the `hypercuboid' is
\bq
\mathscr{\hat{D}} = \biggl\{ (|b_1|,\dots,[p],\dots,[q],\dots,|b_n|) \;\biggl\vert\; |b_j| \le \hat{b}_j,\, j\in\mathscr{N}_{npq} \biggr\} \,.
\eq
The following $n-1$ vertices of the hypercuboid lie in the domain of convergence,
viz.~$(\hat{b}_0,0,\dots,0)$, $(0,\hat{b}_1,0,\dots,0)$, \dots, $(0,\dots,\hat{b}_n)$.
We also know that $\mathscr{D}$ has positive measure and $\mathscr{D} \supset \mathscr{\check{D}}$, where 
\bq
\mathscr{\check{D}} = \biggl\{ (|b_1|,\dots,[p],\dots,[q],\dots,|b_n|) \,\biggl\vert\, 
\sum_{j\in\mathscr{N}_{npq}} |b_j| \le \frac{1}{|\mu_*|^{\mu_*} |1-\mu_*|^{1-\mu_*}} \biggr\} \,.
\eq
Recall eq.~\eqref{eq:radconv_mu*} and the definition of $\mu_*$.
We require the following lemma.
\begin{lemma}
\label{lem:psi_origin}
At the origin, exactly one of the two following mutually exclusive possibilities is true:
(i) $\Psi_{pq}^+(\bm{0}_{pq}) = \Psi_{pq}^-(\bm{0}_{pq}) = 0$, or 
(ii) $\Psi_{pq}^+(\bm{0}_{pq}) = \pm \Psi_{pq}^-(\bm{0}_{pq}) \ne 0$.
(Explicit mention of $\bm{\sigma}$ has been omitted since it is irrelevant at the origin.)
\end{lemma}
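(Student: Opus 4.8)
The plan is to evaluate both discriminants directly at the origin, where the scaled polynomial collapses to the binomial $P_\pm(y) = y^p \pm y^q$ (coefficient $1$ in slot $p$, coefficient $\pm1$ in slot $q$, and $0$ in every other slot). Thus $\Psi^+_{pq}(\bm{0}_{pq})$ and $\Psi^-_{pq}(\bm{0}_{pq})$ are nothing but the degree-$n$ discriminants $\Delta$ of $y^p+y^q$ and $y^p-y^q$. Writing $P_\pm(y)=y^p(1\pm y^{q-p})$, I observe that $1\pm y^{q-p}$ has $q-p$ distinct nonzero roots, so the only possible repeated root of $P_\pm$ is $y=0$, with multiplicity $p$; crucially, none of these structural features depends on the sign of the $y^q$ term.

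First I would settle when the discriminants vanish. There are two sign-independent mechanisms. If $q\le n-2$, then both top coefficients vanish at the origin (since $n-1,n\in\mathscr{N}_{npq}$), and the standard degree-drop identity $\Delta_n(a_0,\dots,a_{n-1},0)=(-1)^{n-1}a_{n-1}^2\,\Delta_{n-1}(a_0,\dots,a_{n-1})$ forces $\Delta_n=0$ through the factor $a_{n-1}^2$. If instead $p\ge 2$, then $P_\pm$ has a double root at $y=0$, so its discriminant vanishes for that reason. Either way $\Psi^+_{pq}(\bm{0}_{pq})=\Psi^-_{pq}(\bm{0}_{pq})=0$, which is possibility (i). In the complementary range $p\in\{0,1\}$ and $q\in\{n-1,n\}$, I apply the degree-drop identity at most once (only when $q=n-1$) to write $\Psi^\pm_{pq}(\bm{0}_{pq})=\varepsilon\,\mathrm{disc}_q(y^p\pm y^q)$ with a common nonzero prefactor $\varepsilon$, whose exact value is immaterial since it cancels in the ratio below; because $p\le 1$ the binomial has genuine degree $q$ with no repeated root, so $\mathrm{disc}_q(y^p\pm y^q)\ne 0$ and both discriminants are nonzero. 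This shows the vanishing is simultaneous, so (i) and (ii) are mutually exclusive and exhaustive, their defining conditions ``$p\ge 2$ or $q\le n-2$'' and ``$p\le 1$ and $q\ge n-1$'' being logical negations.

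It remains to pin down the sign in the nonvanishing case, and here I split on the parity of $q-p$ exactly as in the preceding Lemma. For $q-p$ odd I would invoke the invariance $\Delta_n((-1)^j a_j)=\Delta_n(a_j)$, which is immediate from the root-and-leading-coefficient formula for the discriminant and hence a polynomial identity valid even when $a_n=0$. Applied to the coefficient tuple of $P_+$ it produces the tuple of $(-1)^p\bigl(y^p+(-1)^{q-p}y^q\bigr)=(-1)^p(y^p-y^q)$, so by homogeneity of the discriminant (degree $2n-2$, so $((-1)^p)^{2n-2}=1$ drops out) I get $\Psi^+_{pq}(\bm{0}_{pq})=\Psi^-_{pq}(\bm{0}_{pq})$. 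For $q-p$ even this symmetry yields nothing, so I compute directly from the binomial discriminant $\mathrm{disc}(y^d-c)=(-1)^{d(d-1)/2}d^d(-c)^{d-1}$, peeling off the factor $y$ via $\mathrm{disc}(fg)=\mathrm{disc}(f)\,\mathrm{disc}(g)\,\mathrm{Res}(f,g)^2$ when $p=1$. A short calculation gives $\mathrm{disc}(y^p+y^q)/\mathrm{disc}(y^p-y^q)$ equal to $(-1)^{q-1}$ for $p=0$ and to $(-1)^{q}$ for $p=1$, both of which reduce to $-1$ precisely when $q-p$ is even. Hence $\Psi^+_{pq}(\bm{0}_{pq})=-\Psi^-_{pq}(\bm{0}_{pq})\ne 0$, completing possibility (ii).

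The main obstacle is purely the sign bookkeeping: correctly stating and applying the degeneration identity for the degree-$n$ discriminant of a polynomial whose leading coefficients vanish, and tracking signs in the binomial discriminant so that the parity of $q-p$ emerges cleanly. The conceptual core is easy, namely that both zero-mechanisms are insensitive to the sign of the $y^q$ term, that the substitution $y\mapsto -y$ controls the odd case, and that a one-line binomial computation controls the even case; essentially all the risk lies in the arithmetic of the signs.
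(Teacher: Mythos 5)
Your proposal is correct, but it reaches the conclusion by a genuinely different route than the paper. The paper's proof is a two-line bi-homogeneity argument: the discriminant $\Delta(a_0,\dots,a_n)$ is homogeneous of degree $2n-2$ and isobaric of weight $n(n-1)$, so it contains \emph{at most one} monomial $c_{pq}a_p^\alpha a_q^\beta$ in the two surviving variables (with $\alpha,\beta$ pinned down by $\alpha+\beta=2n-2$ and $p\alpha+q\beta=n(n-1)$); at the origin this gives $\Psi^+_{pq}(\bm{0}_{pq})=c_{pq}$ and $\Psi^-_{pq}(\bm{0}_{pq})=(-1)^\beta c_{pq}$, and the dichotomy is simply $c_{pq}=0$ versus $c_{pq}\ne0$. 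You instead evaluate $\Delta(y^p\pm y^q)$ head-on via the degree-drop identity, the double root at $y=0$ when $p\ge2$, multiplicativity of the discriminant, and the binomial formula $\mathrm{disc}(y^d+b)=(-1)^{d(d-1)/2}d^db^{d-1}$. Your route is longer, but it buys strictly more information: an explicit criterion for which alternative occurs (case (ii) holds precisely when $p\le1$ and $q\ge n-1$, which the paper leaves implicit in the unverified condition $c_{pq}\ne0$) and the explicit sign in case (ii) ($+$ for $q-p$ odd, $-$ for $q-p$ even); I checked that this agrees with the paper's $(-1)^\beta$ with $\beta=(n-1)(n-2p)/(q-p)$ in all four sub-cases, and with the paper's worked examples (e.g.\ $\Psi^\pm_{02}(0,0)=\pm4$ for the cubic, and vanishing for $p=2$, $q=4$ in the quartic). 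One cosmetic slip: the sign $(-1)^{n-1}$ you attach to the degree-drop identity is not right in general (for the quadratic and quartic the specialization is $\Delta_n|_{a_n=0}=+a_{n-1}^2\Delta_{n-1}$), but as you correctly note, only the factor $a_{n-1}^2$ and the fact that the prefactor is common to $\Psi^+$ and $\Psi^-$ are ever used, so nothing in the argument depends on it.
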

\begin{proof}
The values of $\Psi_{pq}^+(\bm{0}_{pq})$ and $\Psi_{pq}^-(\bm{0}_{pq})$ can be nonzero if and only if 
the unscaled discriminant $\Delta(a_0,\dots,a_n)$ contains a term of the form $c_{pq} a_p^\alpha a_q^\beta$
for some coefficient $c_{pq}$ and exponents $\alpha$ and $\beta$.
From the homogeneity properties of the discriminant, 
we must have $\alpha+\beta=2n-2$ and $p\alpha +q\beta = n(n-1)$.
Hence, given $p$ and $q$, then $\alpha$ and $\beta$ are uniquely determined, 
so there is at most one monomial of this form in the discriminant.
We say `at most one' because $\alpha = (n-1)(2q-n)/(q-p)$ and $\beta = (n-1)(n-2p)/(q-p)$
and these values may not be integers.
Even if they are integers, the relevant monomial may not appear in the discriminant.
After scaling, this term (if it exists) maps to $c_{pq}b_p^\alpha b_q^\beta$.
Then at the origin $\bm{)}_{pq}$ we obtain
$\Psi_{pq}^+(\bm{0}_{pq}) = c_{pq}$ and $\Psi_{pq}^-(\bm{0}_{pq}) = (-1)^\beta c_{pq}$.
Hence either (i) holds, if $c_{pq}=0$, or else (ii) holds, with $\Psi_{pq}^+(\bm{0}_{pq}) = c_{pq} = \pm \Psi_{pq}^-(\bm{0}_{pq})$.
\end{proof}

The two cases (i) and (ii) in Lemma \ref{lem:psi_origin} require separate treatments.
In practice, it is convenient to introduce the notion of a `reduced' discriminant.
If $\Psi_{pq}^+(\bm{b},\bm{\sigma})$ contains a common factor, we divide out that common factor.
A common factor in a discriminant clearly cannot contribute to the determination of the domain boundary
in an equation such as $\Psi_{pq}^+(\bm{b},\bm{\sigma})=0$.
We denote the reduced discriminant by $\tilde{\Psi}_{pq}^+(\bm{b},\bm{\sigma})$.
By definition, it does not vanish if any single component $b_j$ in $\bm{b}$ is set to zero.
Next $\Psi_{pq}^-(\bm{b},\bm{\sigma})$ clearly contains the same common factor as $\Psi_{pq}^+(\bm{b},\bm{\sigma})$
because flipping $\pm$ signs in the coefficients of the polynomial does not affect common factors in the discriminant.
Hence by an obvious analogy we define the reduced discriminant $\tilde{\Psi}_{pq}^-(\bm{b},\bm{\sigma})$.
We work with $\tilde{\Psi}_{pq}^+(\bm{b},\bm{\sigma})$ and $\tilde{\Psi}_{pq}^-(\bm{b},\bm{\sigma})$ below.
Note that Lemma \ref{lem:psi_origin} holds true also for the reduced discriminants.

The next key idea is that of {\em foliation}.
For fixed tuples $\bm{b}$ and $\bm{\sigma}$, the level sets of
$\tilde{\Psi}_{pq}^+(\bm{b},\bm{\sigma})$ foliate the parameter space $\mathbb{R}_+^{n-1}$.
The level sets of $\tilde{\Psi}_{pq}^-(\bm{b},\bm{\sigma})$ also foliate $\mathbb{R}_+^{n-1}$.
Hence both families of level sets foliate the domain $\mathscr{\hat{D}}$. 
For our purposes, the foliation is a mapping $\mathbb{R}_+^{n-1} \to \mathbb{R}$,
because both $\tilde{\Psi}_{pq}^+(\bm{b},\bm{\sigma})$ and $\tilde{\Psi}_{pq}^-(\bm{b},\bm{\sigma})$ are real valued.

\subsection{$\tilde{\Psi}_{pq}^\pm(\bm{b},\sigma)$ nonzero at origin}
We begin with the simpler case (ii) in Lemma \ref{lem:psi_origin},
where $\tilde{\Psi}_{pq}^+(\bm{b},\sigma)$ and $\tilde{\Psi}_{pq}^-(\bm{b},\sigma)$ are nonzero at the origin.
First fix the values of $p$ and $q$.
Then for any tuple $\bm{\sigma}$, 
for any $\bm{b}^\prime$ whose image is in $\mathscr{\check{D}}$,
both $\tilde{\Psi}_{pq}^+(\bm{0}_{pq},\bm{\sigma}) \tilde{\Psi}_{pq}^+(\bm{b}^\prime,\bm{\sigma}) > 0$ 
and $\tilde{\Psi}_{pq}^-(\bm{0}_{pq},\bm{\sigma}) \tilde{\Psi}_{pq}^-(\bm{b}^\prime,\bm{\sigma}) > 0$.
To determine the boundary of the domain of convergence, 
we solve for $\bm{b}_*$ where
$\tilde{\Psi}_{pq}^+(\bm{b}_*,\bm{\sigma}) = 0$ or $\tilde{\Psi}_{pq}^-(\bm{b}_*,\bm{\sigma}) = 0$
for any $\sigma \in \Sigma_{pq}$.
This can be encapsulated in a single formula
\bq
\label{eq:bdy_psi0_nonzero}
\prod_{\bm{\sigma}\in\Sigma_{pq}} \tilde{\Psi}_{pq}^+(\bm{b}_*,\bm{\sigma}) \tilde{\Psi}_{pq}^-(\bm{b}_*,\bm{\sigma}) = 0 \,.
\eq
The domain of convergence $\mathscr{D}$ is the set connected to the origin, 
bounded by the hyperplanes which satisfy eq.~\eqref{eq:bdy_psi0_nonzero}.
Although technically there are $2^n$ discriminants in the product in
eq.~\eqref{eq:bdy_psi0_nonzero},
in practice many of them are identical and the number of {\em distinct} discriminants is much fewer.
However, I do not have a definitive estimate of the number of distinct discriminants.
If $q-p$ is odd, we need consider only $\tilde{\Psi}_{pq}^+$ and we can simplify eq.~\eqref{eq:bdy_psi0_nonzero} to
\bq
\label{eq:bdy_psi0_nonzero_odd}
\prod_{\bm{\sigma}\in\Sigma_{pq}} \tilde{\Psi}_{pq}^+(\bm{b}_*,\bm{\sigma}) = 0 \,.
\eq
As an illustrative example, consider the quartic equation
$a_0 +x +x^2 +a_4 x^4 = 0$.
We choose $p=1$ and $q=2$ and we have set $a_1=a_2=1$ so that 
the scaled coefficients are simply $b_j=a_j$.
Because $q-p=1$ is odd, we require only $\Psi_{pq}^+$.
The discriminant has a common factor of $|a_4|$:
\bq
\Psi_{12}^+(|a_0|,|a_4|) = |a_4| \Bigl(256 |a_0|^3 |a_4|^2 -128 |a_0|^2 |a_4| +144 |a_0| |a_4| +16 |a_0| -27 |a_4| -4\Bigr) \,.
\eq
We divide out the common factor $|a_4|$ and obtain the reduced discriminants
\begin{subequations}
\begin{align}
\tilde{\Psi}_{12}^+(|a_0|,|a_4|) &= \phantom{-}256 |a_0|^3 |a_4|^2 -128 |a_0|^2 |a_4| +144 |a_0| |a_4| +16 |a_0| -27 |a_4| -4 \,,
\\
\tilde{\Psi}_{12}^+(|a_0|,-|a_4|) &= \phantom{-}256 |a_0|^3 |a_4|^2 +128 |a_0|^2 |a_4| -144 |a_0| |a_4| +16 |a_0| +27 |a_4| -4 \,,
\\
\tilde{\Psi}_{12}^+(-|a_0|,|a_4|) &= -256 |a_0|^3 |a_4|^2 -128 |a_0|^2 |a_4| -144 |a_0| |a_4| -16 |a_0| -27 |a_4| -4 \,,
\\
\tilde{\Psi}_{12}^+(-|a_0|,-|a_4|) &= -256 |a_0|^3 |a_4|^2 +128 |a_0|^2 |a_4| +144 |a_0| |a_4| -16 |a_0| +27 |a_4| -4 \,.
\end{align}
\end{subequations}
The reduced discriminants all equal $-4$ at the origin.
The necessary bounds for convergence yield $|a_0|\le\frac14$ and $|a_4|\le 4/27$.
Note that the above expressions are quadratics in $|a_4|$.
Thus to solve for $\Psi^+_{12}(\pm|a_0|,\pm|a_4|)=0$, 
we fix a value of $|a_0|$ and solve the resulting quadratic in $|a_4|$.
Note that this procedure will not always yield a real solution for $|a_4|$;
the discriminants which fail to do so do not contribute to the boundary of the domain of convergence.
The discriminant $\tilde{\Psi}_{12}^+(-|a_0|,|a_4|)$ is such a case.
Setting the other discriminants to zero yields valid hyperplanes.
The resulting curves in the $(|a_0|,|a_4|)$ parameter space are displayed in Fig.~\ref{fig:foliation_d12}, for
$\Psi^+_{12}(|a_0|,|a_4|)=0$ (dashed), 
$\Psi^+_{12}(|a_0|,-|a_4|)=0$ (dotdash) and
$\Psi^+_{12}(-|a_0|,-|a_4|)=0$ (solid).
The shaded area indicates the domain $\mathscr{D}_{12}$, 
which is determined by the two hyperplanes given by the level sets
$\Psi^+_{12}(|a_0|,|a_4|)=0$ and $\Psi^+_{12}(-|a_0|,-|a_4|)=0$.
The level set $\Psi^+_{12}(|a_0|,-|a_4|)=0$ does not contribute.
The domain of convergence is therefore
\bq
\mathcal{D}_{12} = \{ \Psi^+_{12}(|a_0|,|a_4|) \le 0 \} \cap \{ \Psi^+_{12}(-|a_0|,-|a_4|) \le 0 \} \,.
\eq
Recall that technically, the domain $\mathcal{D}_{12}$ is the component which satisfies the above conditions
and is connected to the origin.
Observe from the curvature of the upper boundary in Fig.~\ref{fig:foliation_d12}, 
i.e.~the level set $\Psi^+_{12}(-|a_0|,-|a_4|)=0$, that the domain of convergence is {\em not} convex.
A complete Reinhardt domain is logarithmically convex, but is not necessarily convex.

\subsection{$\tilde{\Psi}_{pq}^\pm(\bm{b},\sigma)$ vanishes at origin}
The case (i) in Lemma \ref{lem:psi_origin} is more difficult.
Now $\tilde{\Psi}_{pq}^+(\bm{b},\sigma)$ and $\tilde{\Psi}_{pq}^-(\bm{b},\sigma)$ vanish at the origin, 
hence solving for $\tilde{\Psi}_{pq}^+(\bm{b},\sigma)=0$ or $\tilde{\Psi}_{pq}^-(\bm{b},\sigma)=0$ 
yields the origin as an unwanted solution.
Recall the example of the quartic eq.~\eqref{eq:quartic_caveat}.
Hence we must proceed more carefully.
As always, we first fix the values of $p$ and $q$.
Next, fix a tuple $\bm{\sigma}$.
Then the discriminants will exhibit one of three mutually exclusive properties: 
either $\tilde{\Psi}_{pq}^+(\bm{b},\sigma)$ 
has a local maximum, or a local minimum, or a saddle point at the origin.
The same is true for $\tilde{\Psi}_{pq}^-(\bm{b},\sigma)$.
We can also say `a local extremum or a saddle point' at the origin.
The concept of `local extremum' must be understood carefully,
because it is really a constrained extremization.
It is simplest to illustrate with an example.
Consider a cubic equation $1 + x + b_2x^2 + b_3x^3 = 0$ with $p=0$ and $q=1$. 
Since $q-p=1$ is odd, it suffices to treat $\tilde{\Psi}_{pq}^+(\bm{b},\sigma)$ only.
The expressions for the discriminants in this case are  
(note that they all vanish at the origin)
\begin{subequations}
\begin{align}
\tilde{\Psi}_{01}^+(|b_2|,|b_3|) &= 27|b_3|^2 +4|b_3| +4|b_2|^3 -18|b_2||b_3| -|b_2|^2 \,,
\\
\tilde{\Psi}_{01}^+(|b_2|,-|b_3|) &= 27|b_3|^2 -4|b_3| +4|b_2|^3 +18|b_2||b_3| -|b_2|^2 \,,
\\
\tilde{\Psi}_{01}^+(-|b_2|,|b_3|) &= 27|b_3|^2 +4|b_3| -4|b_2|^3 +18|b_2||b_3| -|b_2|^2 \,,
\\
\tilde{\Psi}_{01}^+(-|b_2|,-|b_3|) &= 27|b_3|^2 -4|b_3| -4|b_2|^3 -18|b_2||b_3| -|b_2|^2 \,.
\end{align}
\end{subequations}
\begin{itemize}
\item
Then $\tilde{\Psi}_{01}^+(|b_2|,-|b_3|)$ has a local maximum at the origin.
Put $|b_2|=0$, then $\tilde{\Psi}_{01}^+(0,-|b_3|) \simeq -4|b_3|$ for sufficiently small $|b_3|$.
This is negative definite because $|b_3|>0$ only, for $b_3\ne0$.
However, the partial derivative $\partial \tilde{\Psi}_{01}^+/\partial |b_3|$ does not vanish at $|b_1|=0$.
Next put $|b_3|=0$, then $\tilde{\Psi}_{01}^+(|b_2|,0) \simeq -|b_2|^2$ for sufficiently small $|b_2|$.
This is also negative definite for $b_2\ne0$.
One can show that $\tilde{\Psi}_{01}^+(|b_2|,-|b_3|) < 0$ for all sufficiently small $|b_2|>0$ and $|b_3|>0$.
Hence for our purposes, a `local maximum' is a {\em constrained} local maximum.
Similarly, the concept of `local minimum' is a constrained local minimum.
\item
Similarly $\tilde{\Psi}_{01}^+(-|b_2|,-|b_3|)$ also has a local maximum at the origin.
\item
However $\tilde{\Psi}_{01}^+(|b_2|,|b_3|)$ and $\tilde{\Psi}_{01}^+(-|b_2|,|b_3|)$ both have saddle points at the origin.
Put $|b_2|=0$, then $\tilde{\Psi}_{01}^+(0,|b_3|) \simeq 4|b_3|$ for sufficiently small $|b_3|$, and is positive for $|b_3|>0$.
Next put $|b_3|=0$, 
then $\tilde{\Psi}_{01}^+(|b_2|,0) \simeq -|b_2|^2$ and $\tilde{\Psi}_{01}^+(-|b_2|,0) \simeq -|b_2|^2$ 
for sufficiently small $|b_2|$, and are both negative for $|b_2|>0$.
This establishes that both $\tilde{\Psi}_{01}^+(|b_2|,|b_3|)$ and $\tilde{\Psi}_{01}^+(-|b_2|,|b_3|)$ 
are of indefinite sign in the vicinity of the origin,
i.e.~they have saddle points at the origin.
\end{itemize}
Returning to the general theory, 
a discriminant $\tilde{\Psi}_{pq}^+(\bm{b},\sigma)$ or $\tilde{\Psi}_{pq}^-(\bm{b},\sigma)$ 
which has a saddle point at the origin
does not contribute to the determination of the boundary of the domain of convergence.
Such a discriminant has a nontrivial level set 
$\tilde{\Psi}_{pq}^+(\bm{b},\sigma)=0$ or $\tilde{\Psi}_{pq}^-(\bm{b},\sigma)=0$ 
which includes the origin,
and as such, the hyperplane cannot form part of a set which 
{\em encloses} any open neighborhood of the origin in $\mathbb{R}_+^{n-1}$.

We must therefore define a set of $\pm$ sign assignments $\Sigma^+_{pq}$ (resp.~$\Sigma^-_{pq}$)
consisting only of those $\bm{\sigma}$ such that
the discriminants $\tilde{\Psi}_{pq}^+(\bm{b},\sigma)$ (resp.~$\tilde{\Psi}_{pq}^-(\bm{b},\sigma)$) 
have a (constrained) local extremum at the origin.
(It is possible that the sets $\Sigma^+_{pq}$ and $\Sigma^-_{pq}$ are identical, but I do not have a proof of this.)
For these values of $\bm{\sigma}$, the origin is a one-element level set
of the equations $\tilde{\Psi}_{pq}^+(\bm{b},\sigma)=0$ or $\tilde{\Psi}_{pq}^-(\bm{b},\sigma)=0$.
We exclude the origin as an unwanted solution.
Recall the example of the quartic eq.~\eqref{eq:quartic_caveat} 
and the discriminant $\tilde{\Psi}^-_{24}(-|a_0|,|a_1|)$.
We then proceed as in the previous section.
To determine the boundary of the domain of convergence, 
we solve for $\bm{b}_{**} \ne \bm{0}_{pq}$ where
\bq
\label{eq:bdy_psi0_zero}
\biggl(\prod_{\bm{\sigma}\in\Sigma^+_{pq}} \tilde{\Psi}_{pq}^+(\bm{b}_{**},\bm{\sigma}) \biggr)
\biggl(\prod_{\bm{\sigma}\in\Sigma^-_{pq}} \tilde{\Psi}_{pq}^-(\bm{b}_{**},\bm{\sigma}) \biggr) = 0 \,.
\eq
The domain of convergence $\mathscr{D}$ is the set connected to the origin, 
bounded by the hyperplanes which satisfy eq.~\eqref{eq:bdy_psi0_zero}.
Hence we require a preliminary calculation to exclude those values of $\bm{\sigma}$ 
for which the discriminants have saddle points at the origin.
As before, if $q-p$ is odd, we can restrict attention only to $\tilde{\Psi}_{pq}^+$ and write the simpler formula
\bq
\label{eq:bdy_psi0_zero_odd}
\prod_{\bm{\sigma}\in\Sigma^+_{pq}} \tilde{\Psi}_{pq}^+(\bm{b}_{**},\bm{\sigma}) = 0 \,.
\eq
As an illustrative example, consider the quartic eq.~\eqref{eq:quartic_caveat}. 
Recall we choose $p=2$ and $q=4$ and we have set $a_2=a_4=1$ so that 
the scaled coefficients are simply $b_0=a_0$ and $b_1=a_1$.
Then
\begin{subequations}
\label{eq:disc_d24_domain}
\begin{align}
\Psi_{24}^+(|a_0|,\pm|a_1|) &= -27|a_1|^4 -4(1-36|a_0|)|a_1|^2 +16|a_0|(1-4|a_0|)^2 \,,
\\
\Psi_{24}^+(-|a_0|,\pm|a_1|) &= -27|a_1|^4 -4(1+36|a_0|)|a_1|^2 +16|a_0|(1+4|a_0|)^2 \,,
\\
\Psi_{24}^-(|a_0|,\pm|a_1|) &= -27|a_1|^4 +4(1+36|a_0|)|a_1|^2 -16|a_0|(1+4|a_0|)^2 \,,
\\
\Psi_{24}^-(-|a_0|,\pm|a_1|) &= -27|a_1|^4 +4(1-36|a_0|)|a_1|^2 +16|a_0|(1-4|a_0|)^2 \,.
\end{align}
\end{subequations}
Hence there are four distinct discriminants, viz.~$\Psi_{24}^+(|a_0|,|a_1|)$,
$\Psi_{24}^+(-|a_0|,|a_1|)$, $\Psi_{24}^-(|a_0|,|a_1|)$ and $\Psi_{24}^-(-|a_0|,|a_1|)$.
The necessary bounds for convergence yield $|a_0|\le\frac14$ and $|a_1|\le \sqrt{4/27}$.
The above expressions are all quadratics in $|a_1|^2$.
Thus to solve for $\Psi^\pm_{24}(\pm|a_0|,\pm|a_1|)=0$, 
we fix a value of $|a_0|$ and solve the resulting quadratic in $|a_1|^2$.
This procedure will not always yield a real positive solution for $|a_1|$;
the discriminants which fail to do so do not contribute to the boundary of the domain of convergence.
The discriminant $\Psi_{24}^+(-|a_0|,|a_1|)$ is such an example.
Setting the other discriminants in eq.~\eqref{eq:disc_d24_domain} to zero yields valid hyperplanes.
The resulting curves in the $(|a_0|,|a_1|)$ parameter space are displayed in Fig.~\ref{fig:foliation_d24}, for
$\Psi^+_{24}(|a_0|,|a_1|)=0$ (dashed), 
$\Psi^-_{24}(|a_0|,|a_1|)=0$ (dotdash) and
$\Psi^-_{24}(-|a_0|,|a_1|)=0$ (solid).
The first two level sets pass through the origin, because the discriminants have saddle points at the origin,
and they do not contribute to the boundary of the domain of convergence.
The domain of convergence is determined solely by the level set $\Psi^-_{24}(-|a_0|,|a_1|)=0$.
However, that level set (the solid curve) bounds {\em two} domains in Fig.~\ref{fig:foliation_d24}.
The domain $\mathscr{D}_{24}$ is given by the shaded area only, because that is the region connected to the origin.
The cross-hatched region is not connected to the origin and is not part of the domain of convergence.
Because $\Psi^-_{24}(-|a_0|,|a_1|)$ has a local minimum at the origin,
the domain of convergence is the component connected to the origin such that 
\bq
\mathcal{D}_{24} = \{ \Psi^-_{24}(-|a_0|,|a_1|) \ge 0 \} \,.
\eq
The caveat about `connectedness to the origin' is essential in this case.
The domain $\mathcal{D}_{24}$ is also not convex.
This is demonstrated in Fig.~\ref{fig:nonconvex_d24}.
The domain $\mathscr{D}$ is the shaded area and is bounded by the solid curve.
The dashed line is the straight line which joins the vertices 
$(\hat{b}_0,0) = (\frac14,0)$ 
and $(0,\hat{b}_1) = (0,\sqrt{4/27})$
to form a right-angled triangle with the origin.
Observe that $\mathcal{D}_{24}$ is not convex,
hence the full domain of convergence $\mathcal{D}_{24}$ is not convex.

\subsection{Normalization of discriminant}
The literature in fact contains multiple normalization conventions for discriminants.
For example for the quadratic $ax^2 +bx +c$, some authors define the discriminant to be 
$\Delta = 4ac-b^2$ and others prefer $\Delta = b^2-4ac$.
Hence the directions of the inequalities in expressions such as eq.~\eqref{eq:pt_cubic_all_me}
could be reversed, depending on the normalization convention employed for the discriminant.
The reader should beware of this important detail.
However, the formulas in both eqs.~\eqref{eq:bdy_psi0_nonzero} and \eqref{eq:bdy_psi0_zero}
are independent of the normalization of the discriminant,
which is an advantage of the above formalism.

%\vfill\pagebreak
\setcounter{equation}{0}
\section{\label{sec:principal_brioschi_quintic} Applications: principal and Brioschi quintics}
The principal and Brioschi forms of the quintic are tetranomials, 
and furnish nontrivial applications of the more sophisticated formalism of Sec.~\ref{sec:conv3}, 
to determine the domains of convergence of their solutions by infinite series.
We treat them in turn.
The principal quintic form is
\bq
\label{eq:principal_quintic}
a_0 + a_1 x + a_2 x^2 + x^5 = 0 \,.
\eq
The discriminant is 
\bq
\begin{split}
\Delta_{\rm prin} = a_5^2 \Bigl(3125 a_0^4 a_5^2 
+2250 a_0^2 a_1 a_2^2 a_5
-1600 a_0 a_1^3 a_2 a_5
+108 a_0 a_2^5 
+256 a_1^5 a_5
-27 a_1^2 a_2^4\Bigr) \,.
\end{split}
\eq
We factor out $a_5^2$ and employ reduced discriminants for the formulas for the 
domains of convergence, expressed in terms of the scaled coefficients $(b_0,b_1,b_2 ,b_5)$
for each choice of $p$ and $q$:
\begin{subequations}
\begin{align}
\mathcal{D}_{01} &= \{ \tilde{\Psi}^+_{01}(|b_2|,-|b_5|) \le 0 \} \,,
\\
\mathcal{D}_{02} &= \{ \tilde{\Psi}^+_{02}(|b_1|,|b_5|) \ge 0 \} \cap \{ \tilde{\Psi}^-_{02}(|b_1|,|b_5|) \le 0 \} \,,
\\
\mathcal{D}_{05} &= \{ \tilde{\Psi}^+_{05}(-|b_1|,-|b_2|) \ge 0 \} \,,
\\
\mathcal{D}_{12} &= \{ \tilde{\Psi}^+_{12}(|b_0|,-|b_5|) \le 0 \} \cap \{ \tilde{\Psi}^+_{12}(-|b_0|,|b_5|) \le 0 \} \,,
\\
\mathcal{D}_{15} &= \{ \tilde{\Psi}^+_{15}(|b_0|,|b_2|) \ge 0 \} \cap \{ \tilde{\Psi}^-_{15}(|b_0|,|b_2|) \le 0 \} \,,
\\
\mathcal{D}_{25} &= \{ \tilde{\Psi}^+_{25}(-|b_0|,|b_1|) \le 0 \} \,.
\end{align}
\end{subequations}
As always, the domains of convergence consist only of the components which are connected to the origin.
Next, the Brioschi normal form of the quintic is \cite{Brioschi}
\bq
\label{eq:brioschi_quintic}
x^5 - 10Cx^3 + 45C^2x -C^2 = 0 \,.
\eq
The coefficients are all functions of a single parameter $C$ and are hence not independent.
There is a real root for all real $C$.
If $C=0$ there is a repeated root of multiplicity five at $x=0$.
We write eq.~\eqref{eq:brioschi_quintic} as
$a_0 +a_1x +a_3x^3 +a_5x^5 = 0$ where
$a_0 = -C^2$, $a_1 = 45C^2$, $a_3 = -10C$ and $a_5=1$.
The discriminant is
\bq
\begin{split}
\Delta_{\rm Br} &= a_5 \Bigl( 3125 a_0^4 a_5^3 +2000 a_0^2 a_1^2 a_3 a_5^2 -900 a_0^2 a_1 a_3^3 a_5
+108 a_0^2 a_3^5 +256 a_1^5 a_5^2 -128 a_1^4 a_3^2 a_5 +16 a_1^3 a_3^4 \Bigr) \,.
\end{split}
\eq
We factor out $a_5$ and employ reduced discriminants for the formulas for the 
domains of convergence, now expressed in terms of the scaled coefficients $(b_0,b_1,b_3 ,b_5)$.
\begin{itemize}
\item
Setting $p=0$ and $q=1$ yields one root.
Put $x = (a_0/a_1)z = -z/45$, then
\bq
0 = 1 +z -\frac{10}{45^3C}\, z^3 +\frac{1}{45^5 C^2}\, z^5 \,.
\eq
The domain of convergence is given by
\bq
\mathcal{D}_{01} = \{ \tilde{\Psi}^+_{01}(-|b_3|,-|b_5|) \ge 0 \} \,.
\eq
This yields the condition
\bq
\label{eq:brioschi_d01}
1 +29376 |C| -36578304 |C|^2 \le 0 \,.
\eq
This is satisfied for 
\bq
\label{eq:brioschi_d01_root}
|C| \ge \frac{17 +13\sqrt2}{32\cdot27\cdot49} 
\simeq 8.358 \cdot 10^{-4} \,.
\eq

\item
Setting $p=1$ and $q=5$ yields four roots.
Put $x = (a_1/a_5)^{1/4}z = (45C^2)^{1/4}z$, then
\bq
0 = -\frac{1}{45^{5/4}C^{1/2}} +z -\frac{10}{45^{1/2}} z^3 +z^5 \,.
\eq
The domain of convergence is given by
\bq
\mathcal{D}_{15} = \{ \tilde{\Psi}^+_{15}(|b_0|,-|b_3|) \ge 0 \} \cap \{ \tilde{\Psi}^-_{15}(|b_0|,-|b_3|) \le 0 \} \,.
\eq
This also yields the condition eq.~\eqref{eq:brioschi_d01}
and hence is also satisfied for 
the bound in eq.~\eqref{eq:brioschi_d01_root}.

\item
Setting $p=0$ and $q=5$ yields five roots.
Put $x = (a_0/a_5)^{1/5}z = -C^{2/5}z$, then
\bq
0 = 1 +45C^{2/5} x -10C^{1/5} z^3 +z^5 \,.
\eq
The domain of convergence is given by
\bq
\mathcal{D}_{05} = \{ \tilde{\Psi}^+_{05}(-|b_1|,-|b_3|) \ge 0 \} \,.
\eq
This yields the condition
\bq
\label{eq:brioschi_d05}
1 -29376 |C| -36578304 |C|^2 \ge 0 \,.
\eq
This is satisfied for 
\bq
\label{eq:brioschi_d05_root}
|C| \le \frac{-17 +13\sqrt2}{32\cdot27\cdot49} 
\simeq 0.327 \cdot 10^{-4} \,.
\eq

\item
Next set $p=0$ and $q=3$.
Put $x = (a_0/a_3)^{1/3}z = (C/10)^{1/3}z$ then
\bq
0 = 1 -\frac{45C^{1/3}}{10^{1/3}} z +z^3 -\frac{1}{10^{5/3}C^{1/3}} z^5 \,.
\eq
The domain of convergence is given by
\bq
\mathcal{D}_{03} = \{ \tilde{\Psi}^+_{03}(-|b_1|,-|b_5|) \ge 0 \} \,.
\eq
This yields the condition
\bq
\label{eq:brioschi_d03}
-(1 - 1728 |C|)^2 \ge 0 \,.
\eq
This is only satisfied by the single value $|C| = 1/1728$,
but $|C| = 1/1728$ lies in a domain not connected to the origin.
Hence this scenario yields no roots.
We see that the stipulation `connected to the origin' is essential.

\item
Next set $p=1$ and $q=3$.
Put $x = (a_1/a_3)^{1/2}z = i(45C/10)^{1/2}z$, then
\bq
0 = \frac{i10^{1/2}}{45^{3/2}C^{1/2}} +z +z^3 +\frac{45}{100} z^5 \,.
\eq
The necessary upper bound for convergence is $|b_5|\le\frac14$.
However, $b_5 = 0.45$, which exceeds the above bound.
Hence the series solution does not converge for any value of $C$, for this scenario.

\item
Next set $p=3$ and $q=5$.
Put $x = (a_3/a_5)^{1/2}z = i(10C)^{1/2}z$, then
\bq
0 = \frac{i}{10^{5/2}C^{1/2}} +\frac{45}{100}z +z^3 +z^5 \,.
\eq
The necessary upper bound for convergence is $|b_1|\le\frac14$.
However, $b_1 = 0.45$, which exceeds the above bound.
Hence the series solution does not converge for any value of $C$ for this scenario.
\end{itemize}
The Brioschi quintic normal form yields some instructive insights.
First, for three of the six possible choices of $p$ and $q$,
the series solutions do not converge for any value of $C$.
Second, observe that the choices
$p=0$, $q=1$ and $p=1$, $q=5$ together yield five roots, but only if
$|C| \gtrsim 8.358 \cdot 10^{-4}$
(see eq.~\eqref{eq:brioschi_d01_root}),
whereas the choice
$p=0$, $q=5$ also yields five roots, but only if
$|C| \lesssim 0.327 \cdot 10^{-4}$
(see eq.~\eqref{eq:brioschi_d05_root}).
Hence there is a {\em gap of values}
for which there is 
{\em no convergent series solution of the Brioschi quintic}, for any choice of $p$ and $q$, given by 
\bq
\label{eq:brioschi_gap}
\frac{-17+13\sqrt2}{32\cdot27\cdot49} 
\le |C| \le
\frac{17+13\sqrt2}{32\cdot27\cdot49} \,.
\eq
The Brioschi quintic demonstrates that the formulas in Sec.~\ref{sec:conv3} 
do {\em not} imply that the domains of convergence
for an algebraic equation of degree $n$ span all values of the coefficients, i.e.~all of $\mathbb{C}^{n+1}$.
The criterion for convergence may not be satisfied for any of the choices of $p$ and $q$.
Nevertheless, it was shown in Sec.~\ref{sec:quintic} 
that convergent series solutions do exist for all values of the coefficients of a general quintic,
via the use of the Bring-Jerrand normal form.
Hence one must examine every algebraic equation on its merits; 
some transformations may work better than others.

%\vfill\pagebreak
\section{\label{sec:conc} Conclusion}
This author was led to the main ideas of this paper
because they are required to prove results in probability and statistics (not reported here).
The papers by numerous authors were cited
and the various notations, definitions, identities 
and nomenclature were collected in a common setting.
Note that although most of the derivations in the literature treat only integer valued parameters,
Theorem \ref{thm:thm_k}
is applicable for arbitrary complex coefficients 
and real (or even complex) exponents.
The early works by 
Lambert \cite{Lambert_1758,Lambert_1770}
and 
Euler \cite{Euler_1779} and
were shown to be Fuss--Catalan series.
An important application of the formalism was the
solution of algebraic equations by infinite series.
This is a heavily studied problem and
contact was made with the works of numerous authors
\cite{Ramanujan_Pt1,
Birkeland_algebraic_1927,
Eagle_trinomial,
Lewis,
McClintock,
Mellin}.
An example was to present convergent
Fuss--Catalan series solutions for all the roots 
the Bring-Jerrard normal form, thence the roots of a general quintic,
for arbitrary values of the quintic coefficients.
Two bounds for the absolute convergence of general Fuss--Catalan series were derived
(necessary but not sufficient and sufficient but not necessary).
For the important special case of 
the solutions of algebraic equations by infinite series, 
a new necessary and sufficient bound for absolute convergence 
was presented in Sec.~\ref{sec:conv3},
correcting and extending earlier work in the field \cite{Passare_Tsikh}.

%\vfill\pagebreak
\section*{\label{sec:ack} Acknowledgements}
I am deeply indebted to numerous individuals who helped me generously with their time and enthusiasm.
However, special mention must go to 
Professors R.~E.~Borcherds and S.~J.~Dilworth,
and to Dr.~P.~M.~Strickland,
without whose unflagging support this work simply would not have seen fruition. 
It is my enormous pleasure to thank them, and also, in alphabetical order,
Professors
I.~M.~Gessel,
H.~W.~Gould,
R.~L.~Graham,
O.~Patashnik,
T.~J.~Ransford,
R.~P.~Stanley and
D.~Zeilberger.
\\
{\bf\color{red} Addendum:} I thank Drs.~F.~de Sousa Coelho and D.~Rubine for pointing out misprints in previous versions of this note.

%\vfill\pagebreak
\appendix
\setcounter{equation}{0}
\section{\label{app:app} Miscellaneous items}
This Appendix lists various items which were not used in the main body of the paper,
essentially for completeness of the exposition.
According to the information in Appendix B of Stanley's text \cite{Stanley_book}
(the Appendix was written by Pak), the name `Catalan numbers'
only came into prominent use after the publication of Riordan's monograph \cite{Riordan}
(in 1968, first edition).
Hence it is understandable if authors such as Gould \cite{Gould_1956,Gould_1957} and Raney \cite{Raney}, 
also earlier authors such as Mellin \cite{Mellin} and Schl{\"a}fli \cite{Schlafli},
did not mention Fuss or Catalan.
Belardinelli's memoir \cite{Belardinelli_memoir}
contains an overview of the solutions of algebraic equations
using hypergeometric series.
His extensive bibliography lists several papers on
functions of several complex variables,
but not papers on combinatorics such as by Raney \cite{Raney}.
There is evidently a diversity of notations and terminology, and duplication of proofs.

The `diversity of notations' leads to an immediate caveat:
different authors employ the same symbols, 
such as $n$, $p$ or $q$, to mean different things
and it is impractical to disambiguate all the notations in the equations below.
The reader is warned to consult the original literature for the precise meanings of all symbols displayed below.  
  
Turning to technical matters,
Mohanty \cite{Mohanty_1966} derived additional convolution identities not mentioned in the main text above.
I list only one, viz.~\cite[eq.~(11)]{Mohanty_1966}, because it subsumes the others as special cases.
In the notation of this paper, \cite[eq.~(11)]{Mohanty_1966} is
\bq
\label{eq:myconv3_k}
\sum_{\bm{j}\in\mathbb{N}^k} (p+\bm{q}\cdot\bm{j})
\mathscr{A}_{\bm{j}}(\bm{b},a)
\mathscr{A}_{\bm{n}-\bm{j}}(\bm{b},c) = 
\frac{p(a+c) +a\bm{q}\cdot\bm{n}}{a+c} \mathscr{A}_{\bm{n}}(\bm{b},a+c) \,.
\eq
All of $a$, $c$, $p$, $\bm{b}$ and $\bm{q}$ are complex valued
and $\bm{b}$, $\bm{j}$, $\bm{n}$ and $\bm{q}$ are $k$-tuples.
Put $\bm{q}=0$ then eq.~\eqref{eq:myconv3_k} yields \cite[eq.~(9)]{Mohanty_1966}, 
which is eq.~\eqref{eq:fc_kconv} above.
Put $p=c+\bm{b}\cdot\bm{n}$ and $\bm{q}=-\bm{b}$ then eq.~\eqref{eq:myconv3_k} yields \cite[eq.~(10)]{Mohanty_1966}. 
Mohanty \cite{Mohanty_1966} actually cited Gould \cite{Gould_1957} for the single-parameter convolution identities;
Mohanty generalized them to multiparameter versions.
Gould \cite{Gould_1956,Gould_1957} 
proved several convolution identities
and suggested that they are all special cases of a single general formula.
The exposition below follows Raney's \cite{Raney} summary of Gould's work.
Define the numbers \cite[eq.~(7.7)]{Raney} 
\bq
\begin{split}
G(\alpha,0;\beta,\gamma) &= 1 \,,
\\
G(\alpha,n;\beta,\gamma) &= \frac{\alpha}{n!}\prod_{m=1}^{n-1} (\alpha+\beta n -\gamma m) \,.
\end{split}
\eq
See also the polynomial $P_k(p)$ by Gould \cite[Sec.~5]{Gould_1957}
and associated comments therein about the work of
Schl{\"a}fli \cite{Schlafli}.
Then \cite[eq.~(7.8)]{Raney} 
\bq
G(\alpha_1+\alpha_2,n;\beta,\gamma) = \sum_{n_1+n_2=n} G(\alpha_1,n_1;\beta,\gamma) G(\alpha_2,n_2;\beta,\gamma) \,.
\eq
Here $\alpha_1,\alpha_2,\beta,\gamma\in\mathbb{A}$ where $\mathbb{A}$ is a commutative ring and $n,n_1,n_2\in\mathbb{N}$.
Gould \cite[Sec.~5]{Gould_1957} 
also proved that the convolution identity derived by
Schl{\"a}fli \cite{Schlafli}, 
in the latter's 1847 paper on Lambert series,
was equivalent to \cite[eq.~(10)]{Gould_1956}. 
See also Riordan \cite{Riordan}
for additional combinatorial identities
and Strehl \cite{Strehl} for an overview of numerous multiparameter identities.
If $\gamma=0$ then 
\bq
G(\alpha,n;\beta,0) = \frac{\alpha}{\alpha+\beta n} \frac{(\alpha+\beta n)^n}{n!} \,.
\eq
If $\gamma\ne0$ then the above is proportional to a Fuss--Catalan number
\bq
\begin{split}
G(\alpha,n;\beta,\gamma) &= 
\frac{(\alpha/\gamma)\gamma^n}{n!}\prod_{m=1}^{n-1} \Bigl(\frac{\alpha+\beta n}{\gamma} -m\Bigr) 
\\
&= \gamma^n A_n(\beta/\gamma, \alpha/\gamma) \,.
\end{split}
\eq
Note that this relation works for $n=0$ also.
A multiparameter generalization might be as follows
\bq
\begin{split}
G(\alpha,\bm{0};\bm{\beta},\gamma) &= 1 \,,
\\
G(\alpha,\bm{n};\bm{\beta},\gamma) &= \frac{\alpha}{n_1!\cdots n_k!}
\prod_{m=1}^{|\bm{n}|-1} (\alpha+\bm{\beta}\cdot\bm{n} -\gamma m) \,.
\end{split}
\eq
Gould also published a later paper \cite{Gould_1974}
with additional formulas,
but its contents are beyond the scope of this paper.
The work of Gould may lead to a more general set of multiparameter identities and generating functions.
The matter will be left to future work.

Kahkeshani \cite{Kahkeshani}
has defined so-called `generalized Catalan numbers' via 
\bq
C(m,n) = \frac{1}{n(m-1)+1} \binom{2n(m-1)}{\underbrace{n,\dots,n}_{m-1},n(m-1)} \,.
\eq
Let us process this as follows.
Set $k=m-1$ and $r=1$ in eq.~\eqref{eq:fc_1def}.
Also set $t_1=\cdots=t_k=n$ and $\mu_1=\cdots=\mu_k=2$, 
so $|\bm{t}|=n(m-1)$ and $\bm{t}\cdot\bm{\mu} = 2n(m-1)$.
Then
\bq
\begin{split}
C(m,n) &= \frac{1}{n(m-1)+1} \frac{1}{(n!)^{m-1}} \prod_{j=0}^{n(m-1)-1} (2n(m-1)-j)
\\
&= \frac{1}{(n!)^{m-1}} \prod_{j=1}^{n(m-1)-1} (2n(m-1)+1-j) 
\\
&= \mathscr{A}_{(n,\dots,n)}((2,\dots,2),1) \,.
\end{split}
\eq
Hence Kahkeshani's definition is a special case of the 
multiparameter Fuss--Catalan numbers defined in this paper.
Note that Chu's \cite{Chu} and 
Kahkeshani's \cite{Kahkeshani}
nomenclature `generalized Catalan numbers' 
should not be confused with each other.
 
We close with a comment on the paper by Aval \cite{Aval},
who defined so-called `multivariate Fuss--Catalan numbers' via \cite[remark 3.2]{Aval} 
\bq
B_p(n,k_1,k_2,\dots,k_{p-1}) = \biggl( \prod_{i=1}^{p-1} \binom{n+k_i-1}{k_i} \biggr)\,\frac{n-\sum_{i=1}^{p-1} k_i}{n} \,.
\eq
Clearly $B_p(\cdot)=1$ for $p=0$ and $p=1$. For $p\ge2$ we have
\bq
\begin{split}
B_p(n,k_1,k_2,\dots,k_{p-1}) &= \frac{n-\sum_{i=1}^{p-1} k_i}{n}
\prod_{i=1}^{p-1} \biggl(\frac{1}{k_1!}\prod_{j=0}^{k_i-1} (n+k_i-1-j)\biggr)
\\
&= n^{p-2}(n-|\bm{k}|) \prod_{i=1}^{p-1} \biggl(\frac{1}{k_1!}\prod_{j=1}^{k_i-1} (n+k_i-j)\biggr)
\\
&= n^{p-2} (n-|\bm{k}|) \prod_{i=1}^{p-1} A_{k_i}(1,n) \,.
\end{split}
\eq
Hence for $p\ge2$, Aval's definition equals a product of $p-1$ 
single-parameter Fuss--Catalan numbers, with a prefactor.
This is different from the multiparameter Fuss--Catalan numbers defined in this paper.

%\vfill\pagebreak
\setcounter{equation}{0}
\section{\label{app:sturmfels} $\mathscr{A}$-hypergeometric series}
Sturmfels \cite{Sturmfels}
published an elegant analysis employing so-called $\mathscr{A}$-hypergeometric series 
to solve for the roots of the general algebraic equation of degree $n$.
A brief comparison with his work is presented here.
His first example is for the quintic.
Let us write the quintic in the form
\bq
x = -\frac{a_0}{a_1} -\frac{a_2}{a_1}x^2 -\frac{a_3}{a_1}x^3 -\frac{a_4}{a_1}x^4 -\frac{a_5}{a_1}x^5 \,.
\eq
This corresponds to $p=0$ and $q=1$ in my terminology,
so $q-p=1$ and the Fuss--Catalan series yields one root, which is 
\bq
\label{eq:fc_quint_sturmfels}
\begin{split}
x_{\rm root} &= -\frac{a_0}{a_1}
\biggl[ \sum_{\bm{t}\in\mathbb{N}^4} A_{\bm{t}}(\bm{\mu}, 1)\, \frac{e^{-i\pi \bm{t}\cdot\bm{\mu}}}{a_0^{t-\bm{t}\cdot\bm{\mu}} a_1^{\bm{t}\cdot\bm{\mu}}} 
\Bigl(\prod_{j\in\mathscr{N}_{npq}} a_j^{t_j}\Bigr) \biggr]
\\
&= -\frac{a_0}{a_1} \biggl[\, 1 
+\frac{a_0a_2}{a_1^2}
-\frac{a_0^2a_3}{a_1^3}
+\frac{a_0^3a_4}{a_1^4}
-\frac{a_0^4a_5}{a_1^5}
+\frac{2a_0^2a_2^2}{a_1^4}
-\frac{5a_0^3a_2a_3}{a_1^5}
+\cdots
\biggr] \,.
\end{split}
\eq
This equals the root $X_{1,-1}$ of Sturmfels \cite{Sturmfels}.
Next let us select $p=0$ and $q=5$ and write
\bq
x^5 = -\frac{a_0}{a_5} -\frac{a_1}{a_5}x -\frac{a_2}{a_5}x^2 -\frac{a_3}{a_5}x^3 -\frac{a_4}{a_5}x^4 \,.
\eq
The series yields five roots.
Following Sturmfels, we define $\xi = e^{i\pi(2\ell+1)/5}$ as a root of $-1$.
Then the roots of the quintic are given by
\bq
\begin{split}
x_\xi &= \xi\, \frac{a_0^{1/5}}{a_5^{1/5}} \,
\sum_{\bm{t}\in\mathbb{N}^4}
A_{\bm{t}}\Bigl(\bm{\mu}, \frac15\Bigr)\, \frac{\xi^{5\bm{t}\cdot\bm{\mu}}}{a_0^{t-\bm{t}\cdot\bm{\mu}} a_5^{\bm{t}\cdot\bm{\mu}}} 
\Bigl(\prod_{j\in\mathscr{N}_{npq}} a_j^{t_j}\Bigr) 
\\
&= \xi\, \frac{a_0^{1/5}}{a_5^{1/5}} 
+\frac15\biggl(
\frac{\xi^2 a_1}{a_0^{3/5}a_5^{2/5}} 
+\frac{\xi^3 a_2}{a_0^{4/5}a_5^{3/5}} 
+\frac{\xi^4 a_3}{a_0^{1/5}a_5^{4/5}} 
-\frac{ a_4}{a_5} \biggr)
+\cdots 
\end{split}
\eq
These are the leading terms of the $\mathscr{A}$-hypergeometric series for the roots $X_{5,\xi}$
of Sturmfels (see \cite{Sturmfels} for details of his notation)
\bq
X_{5,\xi} = \xi\,\biggl[\frac{a_0^{1/5}}{a_5^{1/5}}\biggr]
+\frac15\biggl( 
\xi^2\,\biggl[\frac{a_1}{a_0^{3/5} a_5^{2/5}}\biggr]
+\xi^3\,\biggl[\frac{a_2}{a_0^{2/5} a_5^{3/5}}\biggr]
+\xi^4\,\biggl[\frac{a_3}{a_0^{1/5} a_5^{4/5}}\biggr]
-\biggl[\frac{a_4}{a_5}\biggr] \biggr) \,.
\eq
\begin{itemize}
\item
This illustrates a difference between the use of $\mathscr{A}$-hypergeometric series and Fuss--Catalan series.
In general, for a polynomial of degree $n$, a total of $n$ $\mathscr{A}$-hypergeometric series 
are required to derive solutions for all the roots.
In contrast, a Fuss--Catalan series encapsulates all the roots in one series, cycling through the roots of unity.
There is a single formula for all the terms in a Fuss--Catalan series.

\item
A similar remark applies to the work of 
Birkeland \cite{Birkeland_algebraic_1927}.
In general, a total of $|q-p|^{n-1}$ hypergeometric series 
are required to express the solutions for all the roots
of an algebraic equation of degree $n$.

\item
For the `triangulation of unit length' of the quintic,
Sturmfels obtained expressions for the five roots $X_{j,-1}$, $j=1,\dots,5$
(see \cite{Sturmfels} for details).
The example $X_{1,-1}$ was displayed above.
If all of the coefficients of the quintic are real
then all of the series for the roots $X_{j,-1}$ are real.
However, a quintic with all real coefficients may not have all real roots.
As Sturmfels noted, the $\mathscr{A}$-hypergeometric series have finite radii of convergence.
Sturmfels offered a convergence criterion for the $\mathscr{A}$-hypergeometric series in his Theorem 3.2,
reproduced here for ease of reference.
(Consult \cite{Sturmfels} for definitions and notation).
\begin{theorem*}[\cite{Sturmfels} Theorem 3.2]
\label{thm:sturmfels_thm3.2}
The $n$ series $X_{j,\xi}$ are roots of the general equation of order $n$; that is;
$f(X_{j,\xi})=0$. There exists a constant $M$ such that all $n$ series $X_{j,\xi}$ converge whenever
\bq
\label{eq:sturmfels_thm3.2}
|a_{i_{j-1}}|^{i_j-k}
|a_{i_j}|^{k-i_{j-1}} \ge M |a_k|^{d_j} \qquad 
\textrm{for all $1 \le j\le r$ and $k\not\in \{i_{j-1},i_j\}$}\,.
\eq
\end{theorem*}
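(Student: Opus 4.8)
The plan is to reduce the statement to results already established in this paper by exploiting the identification, illustrated in the examples above, of each $\mathscr{A}$-hypergeometric series $X_{j,\xi}$ with a Fuss--Catalan series. Sturmfels' triangulation data $\{i_0,\dots,i_r\}$ of $\{0,\dots,n\}$ partitions the exponent range into pieces $[i_{j-1},i_j]$ of length $d_j=i_j-i_{j-1}$, and the $d_j$ series $X_{j,\xi}$ attached to piece $j$ are indexed by the radicals $\xi$. Fixing a piece and writing $p=i_{j-1}$, $q=i_j$, so that (in the notation of Sec.~\ref{sec:algeq1}) $\mu_k=(k-p)/(q-p)$ for $k\in\mathscr{N}_{npq}$, I would identify $X_{j,\xi}$ with the series solution $x_\ell$ of eq.~\eqref{eq:pivot_pq} for the pivot pair $(p,q)$ and radical $\xi=e^{i\pi(2\ell+1)/(q-p)}$. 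That each such series is a genuine root, i.e.~$f(X_{j,\xi})=0$, is then immediate from Theorem~\ref{thm:thm_k}(a): the generating function $\mathcal{B}(\bm{\mu};1;\bm{z})$ satisfies eq.~\eqref{eq:f_mult}, and rearranging eq.~\eqref{eq:f_mult} recovers the scaled algebraic equation $\mathscr{P}_{pq}(y)=0$. Since $\sum_j d_j=i_r-i_0=n$, cycling through all pieces and all radicals accounts for exactly the $n$ series named in the theorem.

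It remains to produce the convergence constant $M$. First I would recast Sturmfels' hypothesis in terms of the scaled coefficients $b_k=a_k/(a_p^{1-\mu_k}a_q^{\mu_k})$. A direct computation gives
\[
|b_k|^{d_j}=\frac{|a_k|^{q-p}}{|a_p|^{q-k}\,|a_q|^{k-p}}\,,
\]
so the inequality $|a_{i_{j-1}}|^{i_j-k}|a_{i_j}|^{k-i_{j-1}}\ge M|a_k|^{d_j}$ is exactly equivalent to the per-coefficient bound $|b_k|\le M^{-1/d_j}$ for each $k\notin\{p,q\}$. Thus Sturmfels' hypothesis is nothing other than a uniform amplitude bound on the scaled coefficients of $\mathscr{P}_{pq}$.

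Next I would invoke the sufficient condition for absolute convergence, Proposition~\ref{prop:suff1}. Since at most $n-1$ of the $b_k$ are nonzero (the cardinality of $\mathscr{N}_{npq}$), the per-coefficient bound yields $\sum_{k\in\mathscr{N}_{npq}}|b_k|\le (n-1)M^{-1/d_j}$. Choosing $M$ large enough that $(n-1)M^{-1/d_j}$ falls below the threshold $\min(\dots)$ on the right of eq.~\eqref{eq:radconv} forces the series for that piece to converge absolutely. Because the domain of absolute convergence depends only on the amplitudes $|b_k|$ and not on the choice of radical, the same $M$ serves for every $\xi$ attached to that piece.

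The main obstacle is to obtain a \emph{single} constant $M$ valid simultaneously across all $r$ pieces of the triangulation. The convergence threshold in eq.~\eqref{eq:radconv} varies from piece to piece, since the governing exponent $\mu_*$ (the $\mu_k$ farthest from $\half$ within that piece) changes with $j$, so one must verify these thresholds stay bounded away from zero. This is harmless: there are only finitely many pieces and, within each, the exponents $\mu_k$ lie in $[0,1]$, so each threshold $1/(|\mu_*|^{\mu_*}|1-\mu_*|^{1-\mu_*})$ is a fixed positive number. Taking $M$ to be the largest of the finitely many values required by the individual pieces yields the uniform constant of the theorem. I would expect the only genuinely delicate accounting to be the bookkeeping matching Sturmfels' data $(i_{j-1},i_j,d_j)$ to the pivot indices $(p,q)$ and confirming $d_j=q-p$ in every case.
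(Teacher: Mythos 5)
Your argument is essentially sound, but note that the paper does not prove this statement at all: it is imported verbatim from Sturmfels \cite{Sturmfels} (with a corrected inequality direction), and the paper's only original contribution nearby is to supply a candidate constant $M$ built from the necessary bound eq.~\eqref{eq:conv_nec_j}. You instead give a self-contained derivation by identifying each $X_{j,\xi}$ with the Fuss--Catalan series of eq.~\eqref{eq:pivot_pq} for the pivot pair $(p,q)=(i_{j-1},i_j)$, translating Sturmfels' hypothesis into the per-coefficient bound $|b_k|\le M^{-1/d_j}$ (your algebra $|b_k|^{d_j}=|a_k|^{d_j}/(|a_{i_{j-1}}|^{i_j-k}|a_{i_j}|^{k-i_{j-1}})$ is correct), and then closing via the triangle inequality and Proposition~\ref{prop:suff1}; this is the natural completion of what the appendix only gestures at, and it uses the \emph{sufficient} criterion where the paper's remark invokes the \emph{necessary} one, which is the right choice for proving the theorem. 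Two small corrections: first, your claim that within each piece ``the exponents $\mu_k$ lie in $[0,1]$'' is false --- for $k$ outside the interval $[i_{j-1},i_j]$ one has $\mu_k=(k-p)/(q-p)<0$ or $>1$ --- but the conclusion you need survives, since $\mu_k\notin\{0,1\}$ for all $k\in\mathscr{N}_{npq}$ and hence each threshold $1/(|\mu_*|^{\mu_*}|1-\mu_*|^{1-\mu_*})$ is still a fixed positive number. Second, your construction actually yields the explicit constant $M=\max_j\bigl[(n-1)^{d_j}\max_{k}\bigl(|\mu_k|^{\mu_k}|1-\mu_k|^{1-\mu_k}\bigr)^{d_j}\bigr]$, which is cleaner (and arguably more defensible) than the value $M=\min(M_{k,i_j,i_{j-1}})/(n-1)$ quoted in the appendix; it would be worth recording this. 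The one genuinely unproved step in both your sketch and the paper is the term-by-term identification of the general $\mathscr{A}$-hypergeometric series $X_{j,\xi}$ with the Fuss--Catalan series --- the appendix only verifies it on two quintic examples --- so if this theorem is to be proved rather than cited, that identification needs to be established in general.
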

The above corrects a misprint in the direction of the inequality in \cite[Thm.~3.2]{Sturmfels}.
(I thank Sturmfels \cite{Sturmfels_privcomm}
for confirming the correct direction of the inequality.)

\item
Sturmfels also stated (last paragraph in \cite[Section 3]{Sturmfels})
``First, no good bound for $M$ seems to be currently known, and, second,
for many concrete instances the inequalities (3.2) 
{\em [this is reproduced as eq.~\eqref{eq:sturmfels_thm3.2} above]}
will not hold for any triangulation.
In this case one has to carry out analytic continuation: \dots''
From eq.~\eqref{eq:conv_nec_j}, we can supply a value for $M$ above.
First define 
\bq
M_{k,i_j,i_{j-1}} = \frac{|k-i_{j-1}|^{k-i_{j-1}} |i_j-k|^{i_j-k}}{|i_j-i_{j-1}|^{i_j-i_{j-1}}} \,.
\eq
Then $M = \min(M_{k,i_j,i_{j-1}})/(n-1)$.
Sturmfels was however correct to note that a convergent series solution might not exist for any triangulation.
We saw some examples earlier in this paper.
\end{itemize}

%\vfill\pagebreak
\setcounter{equation}{0}
\section{Relation of Fuss-Catalan series to hypergeometric series}\label{app:hyp_series}
\subsection{Relation of series}\label{sec:rel_series}
It was stated in Sec.~\ref{sec:Birkeland}
that Birkeland \cite{Birkeland_algebraic_1927} published papers on the solutions of algebraic equations using hypergeometric series.
It was also stated in Sec.~\ref{sec:Birkeland} that it would take us too far afield to discuss hypergeometric series in this paper.
However, the connection is tractable in some circumstances, and we present the connection here.

We begin by stating the definition of the hypergeometric function ${}_mF_n(a_1,\dots,a_m;c_1,\dots,c_n;z)$.
The definition is given in many textbooks.
Here $m$ and $n$ are nonnegative integers and the series coefficients contain ratios of products of rising factorials as follows
\bq
\label{def:hyp_def}
\begin{split}
{}_mF_n(a_1,\dots,a_m;c_1,\dots,c_n;z) &= 1 +\frac{a_1\dots a_m}{c_1\dots c_n}\,\frac{z}{1!}
+\frac{a_1(a_1+1)\dots a_m(a_m+1)}{c_1(c_1+1)\dots c_n(c_n+1)}\,\frac{z^2}{2!}
\\
&\qquad
+\frac{a_1(a_1+1)(a_1+2)\dots a_m(a_m+1)(a_m+2)}{c_1(c_1+1)(c_1+2)\dots c_n(c_n+1)(c_n+2)}\,\frac{z^3}{3!} +\dots
\end{split}
\eq
Note the following.
\begin{enumerate}
\item
If $m=0$, the numerator parameter list is absent and the numerator is set equal to unity.
\item
If $n=0$, the denominator parameter list is absent and the denominator is set equal to unity.
\item
If the series is nonterminating, its domain of convergence is $|z|<1$ if $m=n+1$,
$|z|<\infty$ if $m < n+1$ and $z=0$ if $m > n+1$.
\item
If any of the parameters $a_i$, $i=1,\dots,m$ in the numerator are negative integers, the series terminates and is a polynomial in $z$.
\item
If any of the parameters $c_j$, $j=1,\dots,n$ in the denominator are negative integers, the series is ill-defined.
\item
However, there are caveats, e.g.~if both the numerator and denominator parameter lists contain negative integers.
We refer the reader to textbooks for the details.
We assume below that the series in eq.~\eqref{def:hyp_def} is well-defined.
\end{enumerate}

Let us relate a Fuss-Catalan series to a hypergeometric series.
We treat only the case of scalar $\mu$ and $z$, viz.~$\mathcal{B}(\mu;r;z)$.
From Remark \ref{rem:b_symm_mu_scalar} and eq.~\eqref{eq:b_symm_mu_scalar},
it suffices to treat $\mu\ge0$ and $r=1$ only.
The case $\mu=0$ is trivial, viz.~$\mathcal{B}(0;1;z) = 1+z = {}_1F_0(-1;;-z)$, hence we assume $\mu>0$ below.

For integer $\mu > 0$, the Fuss-Catalan series $\mathcal{B}(\mu;1;z)$ is related to a ${}_\mu F_{\mu-1}$ hypergeometric series as follows:
\begin{equation}
\label{eq:B_hyp_pos}  
\mathcal{B}(\mu;1;z) = {}_\mu F_{\mu-1}\Bigl(\frac{1}{\mu},\dots,\frac{\mu-1}{\mu},\frac{\mu}{\mu};
\frac{2}{\mu-1},\dots,\frac{\mu-1}{\mu-1},\frac{\mu}{\mu-1};
\frac{\mu^\mu}{(\mu-1)^{\mu-1}}\,z\Bigr) \,.
\end{equation}
Note the following.
\begin{enumerate}
\item
The Fuss-Catalan series $\mathcal{B}(\mu;1;a)$ is well-defined even if $\mu$ is positive but not an integer.
However, the hypergeometric series in eq.~\eqref{eq:B_hyp_pos} requires the value of $\mu$ to be a positive integer.  

\iffalse
\item
If $\mu=1$, then
$\mathcal{B}(1;1;z) = {}_1 F_0(1;;z) = 1 +z +z^2 +\dots = 1/(1-z)$.
%\begin{equation}
%\label{eq:B_hyp_pos1}  
%\mathcal{B}(1;1;z) = {}_1 F_0(1;;z) = 1 +z +z^2 +\dots = \frac{1}{1-z} \,.
%\end{equation}
\item
If $\mu=2$, we obtain a series whose coefficients are the Catalan numbers:
\begin{equation}
\label{eq:B_hyp_pos2}  
\mathcal{B}(1;1;z) = {}_2 F_1\Bigl(\frac12,1;2;4z\Bigr) = 1 +z +2z^2 +5z^3 +14z^4 +\dots 
\end{equation}
\fi

\item
If $\mu>2$, the numerator and denominator parameter lists in ${}_\mu F_{\mu-1}$ in eq.~\eqref{eq:B_hyp_pos} 
both contain a term which equals unity,
which we cancel to yield a ${}_{\mu-1}F_{\mu-2}$ hypergeometric series as follows
\begin{equation}
\label{eq:B_hyp_pos3}
\mathcal{B}(\mu;1;z) = {}_{\mu-1} F_{\mu-2}\Bigl(\frac{1}{\mu},\dots,\frac{\mu-1}{\mu};
\frac{2}{\mu-1},\dots,\frac{\mu-2}{\mu-1},\frac{\mu}{\mu-1};
\frac{\mu^\mu}{(\mu-1)^{\mu-1}}\,z\Bigr) \,.
\end{equation}
\end{enumerate}
It is actually possible to derive a hypergeometric series if $\mu$ is a negative integer.
As noted, it suffices to set $r=1$.
For $\mu$ a negative integer, the na{\"\i}ve relation is to a ${}_{|\mu|+1}F_{|\mu|}$ hypergeometric series as follows:
\begin{equation}
\label{eq:B_hyp_neg}  
\mathcal{B}(\mu;1;z) =
{}_{|\mu|+1}F_{|\mu|}\biggl(\frac{-1}{|\mu|+1},\frac{0}{|\mu|+1},\frac{1}{|\mu|+1},\dots,\frac{|\mu|-1}{|\mu|+1};\frac{0}{|\mu|},\frac{1}{|\mu|}\dots\frac{|\mu|-1}{|\mu|};
  -\frac{(|\mu|+1)^{|\mu|+1}}{|\mu|^{|\mu|}}z\biggr) \,.
\end{equation}
The vanishing terms in the numerator and denominator parameter lists pose a problem.
One possibility is to set $r=1+\epsilon$ and take the limit $\epsilon\to0$.
This is possible, but not computationally useful.
Instead we play the following trick.
We omit the zero terms in the numerator and denominator parameter lists in the hypergeometric function in eq.~\eqref{eq:B_hyp_neg}  
and consider the following ${}_{|\mu|}F_{|\mu|-1}$ modified hypergeometric function.
For brevity, define $\chi = -(|\mu|+1)^{|\mu|+1}/|\mu|^{|\mu|})z$.
Then, for $\mu$ a negative integer, we obtain
\begin{equation}
\begin{split}
  {}_{|\mu|}F_{|\mu|-1}\biggl(\frac{-1}{|\mu|+1},\frac{1}{|\mu|+1},\dots,\frac{|\mu|-1}{|\mu|+1};
  \frac{1}{|\mu|},\dots,\frac{|\mu|-1}{|\mu|}; \chi\biggr)
  &= 1 +\frac{\frac{-1}{|\mu|+1}\frac{1}{|\mu|+1}\,\frac{|\mu|-1}{|\mu|+1}}{\frac{1}{|\mu|}\dots\frac{|\mu|-1}{|\mu|}}\, \chi +\dots
  \\
  &= 1 + \frac{|\mu|+1}{|\mu|}\Bigl[-1 + \mathcal{B}(\mu;1;z)\Bigr] \,.        
\end{split}
\end{equation}
Hence for $\mu$ a negative integer, the connection is proportional to a constant plus a hypergeometric series.
\begin{equation}
\label{eq:B_hyp_neg1}  
  \mathcal{B}(\mu;1;z) = \frac{|\mu|}{|\mu|+1}\biggl[\, \frac{1}{|\mu|} + {}_{|\mu|}F_{|\mu|-1}\biggl(\frac{-1}{|\mu|+1},\frac{1}{|\mu|+1},\dots,\frac{|\mu|-1}{|\mu|+1};
    \frac{1}{|\mu|},\dots,\frac{|\mu|-1}{|\mu|}; -\frac{(|\mu|+1)^{|\mu|+1}}{|\mu|^{|\mu|}}z\biggr)\biggr] \,.
\end{equation}

If the value of $\mu$ is not an integer, there is no simple relation between a Fuss-Catalan series and a hypergeometric series.
In general, we require a weighted sum of hypergeometric series.
It is perhaps easiest and best to explain the matter by presenting the example of Birkeland's hypergeometric series solutions for the trinomial
and relating them to our Fuss-Catalan series solutions in Sec.~\ref{sec:tri}.

%\vfill\pagebreak
\subsection{Series for selected roots of the trinomial}
%Recall the trinomial equation $x^{m+n} +ax^n +b = 0$ in eq.~\eqref{eq:tri} in Sec.~\ref{sec:tri}.
%We treat the two cases (i) $n=1$ and (ii) $m=1$.

\subsubsection{Case $n=1$}
The trinomial equation is $x^{m+1} +ax +b = 0$ in eq.~\eqref{eq:tri}.
Using eq.~\eqref{eq:trinom_soln_2} yields one root, given by
\begin{equation}
x_{\rm root} = -\frac{b}{a}\mathcal{B}\Bigl(m+1; 1; (-1)^{m+1}\frac{b^m}{a^{m+1}}\Bigr) \,.
\end{equation}
Using eq.~\eqref{eq:B_hyp_pos} yields (with $z = (-1)^{m+1}b^m/a^{m+1}$)
\begin{equation}
\mathcal{B}(m+1;1;z) = {}_{m+1} F_{m}\Bigl(\frac{1}{m+1},\dots,\frac{m}{m+1},\frac{m+1}{m+1};
\frac{2}{m},\dots,\frac{m}{m},\frac{m+1}{m};
\frac{(m+1)^{m+1}}{m^m}\,z\Bigr) \,.
\end{equation}
As noted in eq.~\eqref{eq:B_hyp_pos3}, simplification is possible if $m>2$.
%Using eqs.~\eqref{eq:B_hyp_pos1}--\eqref{eq:B_hyp_pos3},
%simplifications are possible for the special cases $m=1$, $m=2$ and $m>2$.

\subsubsection{Case $m=1$}
The trinomial equation is $x^{n+1} +ax^n +b = 0$ in eq.~\eqref{eq:tri}.
Using eq.~\eqref{eq:trinom_soln_3} yields one root, given by
\begin{equation}
x^\prime_{\rm root} = -a\,\mathcal{B}\Bigl(-n; 1; (-1)^{n}\frac{b}{a^{n+1}}\Bigr) \,.
\end{equation}
Using eqs.~\eqref{eq:B_hyp_neg1} yields (with $z = (-1)^{n}b/a^{n+1}$)
\begin{equation}
\begin{split}
  \mathcal{B}(-n;1;z) &=
\frac{n}{n+1}\biggl[\, \frac{1}{n} + {}_{n}F_{n-1}\biggl(\frac{-1}{n+1},\frac{1}{n+1},\dots,\frac{n-1}{n+1};
    \frac{1}{n},\dots,\frac{n-1}{n}; -\frac{(n+1)^{n+1}}{n^{n}}z\biggr)\biggr] \,.
\end{split}
\end{equation}

%\vfill\pagebreak
\subsection{Trinomial series solutions for cubic and quintic}
Birkeland treated the trinomial equation $x^n = gx^s +\beta$ (\cite{Birkeland_algebraic_1927}, eq.~(13)).
We expressed the trinomial equation as $x^{m+n} +ax^n +b = 0$ in eq.~\eqref{eq:tri} in Sec.~\ref{sec:tri}.
The relation between the two sets of parameters is $(n,s,g,\beta) \leftrightarrow (m+n,n,-a,-b)$.
Birkeland stipulated that $n>s>0$, which corresponds to our constraint that our $m$ and $n$ are both positive integers.

Birkeland published a solution for the trinomial for arbitrary $n>s>0$.
He also published the solutions for the specific cases of the cubic ($n=3$ and $s=1$) and quintic ($n=5$ and $s=2$),
and those cases are what we treat below.
To avoid ambiguity of notation, we denote Birkeland's roots by $x_\ell$ and ours by $\xi_\ell$, where $\ell=1,\dots,n$.
(Note that $x_\ell \ne \xi_\ell$ in general, but the overall set $\{x_\ell\}$ is the same as the set $\{\xi_\ell\}$.)

We begin with the cubic, hence $\ell=1,2,3$.
Following Birkeland, define $\zeta = (27/4)(\beta^2/g^3)$. 
There are two sets of solutions for the roots, for $|\zeta|\le1$ and $|\zeta|>1$.
For $|\zeta|\le1$, our solutions are given by eqs.~\eqref{eq:trinom_soln_2} and \eqref{eq:trinom_soln_3}.
Using eq.~\eqref{eq:trinom_soln_3} yields two roots
\begin{subequations}
\begin{align}
\xi_1 &=  i\sqrt{a}\, \mathcal{B}\Bigl(-\frac12; \frac12\; -i\frac{b}{a^{3/2}}\Bigr) \,,
\\
\xi_2 &= -i\sqrt{a}\, \mathcal{B}\Bigl(-\frac12; \frac12\; i\frac{b}{a^{3/2}}\Bigr) \,.
\end{align}
\end{subequations}
Using eq.~\eqref{eq:trinom_soln_2} yields one root
\begin{equation}
\xi_3 = -\frac{b}{a}\mathcal{B}\Bigl(3; 1; -\frac{-b^2}{a^3}\Bigr) \,.
\end{equation}
From Birkeland \cite{Birkeland_algebraic_1927}, the roots are
\begin{subequations}
\begin{align}
x_1 &= \sqrt{g} \biggl[ -{}_2F_1\Bigl(-\frac16,\frac16;\frac12;\zeta\Bigr) +\frac13\sqrt{\frac{\zeta}{3}}\,{}_2F_1\Bigl(\frac13,\frac23;\frac32;\zeta\Bigr)\biggr] \,.
\\
x_2 &= \sqrt{g} \biggl[ {}_2F_1\Bigl(-\frac16,\frac16;\frac12;\zeta\Bigr) +\frac13\sqrt{\frac{\zeta}{3}}\,{}_2F_1\Bigl(\frac13,\frac23;\frac32;\zeta\Bigr)\biggr] \,.
\\
x_3 &= -\frac23\sqrt{\frac{g\zeta}{3}}\,{}_2F_1\Bigl(\frac13,\frac23;\frac32;\zeta\Bigr) \,.
\end{align}
\end{subequations}
Observe that each root $x_\ell$ is given by a single Fuss-Catalan series.
The Fuss-Catalan series for $\xi_3$ is given by integer $\mu=3$ and the root $x_3$ is given by a single hypergeometric series.
However, the roots $\xi_1$ and $\xi_2$ are given by Fuss-Catalan series with fractional $\mu=-\frac12$
and the roots $x_1$ and $x_2$ are given by a sum/difference of two hypergeometric series.
For $|\zeta|>1$, our solutions are given by eq.~\eqref{eq:trinom_soln_1}
\begin{equation}
\xi_\ell = e^{i\pi (2\ell-1)/3} b^{1/3} \mathcal{B}\Bigl(\frac13;\,\frac13;\, e^{i\pi\frac{2(2\ell-1)}{3}} \frac{a}{b^{2/3}} \Bigr) \qquad (\ell=1,2,3) \,.
\end{equation}
From Birkeland \cite{Birkeland_algebraic_1927}, the roots are (where $\omega_3$ is a primitive third root of unity)
\begin{equation}
  x_\ell = \sqrt{\frac{g}{3}}\biggl[\,\omega_3^\ell 2^{1/3}\zeta^{1/6}{}_2F_1\Bigl(-\frac16,\frac13;\frac23;\frac{1}{\zeta}\Bigr)
    +\omega_3^{2\ell} 2^{-1/3}\zeta^{-1/6}{}_2F_1\Bigl(\frac16,\frac23;\frac43;\frac{1}{\zeta}\Bigr)\biggr]   \qquad (\ell=1,2,3) \,.
\end{equation}
In this scenario, all the Fuss-Catalan series for $\xi_\ell$ are given by fractional $\mu=\frac13$
and the roots $x_\ell$ are given by weighted sums of two hypergeometric series.

%\vfill\pagebreak
For the quintic, Birkeland treated the equation $x^5 = x^2 + \beta$, i.e.~$g=1$. (Hence $a=-1$ in our notation.)
The roots are indexed by $\ell=1,\dots,5$.
Following Birkeland, we now define $\zeta = -(5^5/(2^23^3))\beta^3$. 
There are two sets of solutions for the roots, for $|\zeta|\le1$ and $|\zeta|>1$.
For $|\zeta|\le1$, our solutions are given by eqs.~\eqref{eq:trinom_soln_2} and \eqref{eq:trinom_soln_3}.
Using eq.~\eqref{eq:trinom_soln_3} yields three roots
\begin{subequations}
\begin{align}
\xi_1 &= e^{i\pi/3} a^{1/3} \mathcal{B}\Bigl(-\frac23;\frac13; e^{-i2\pi/3} \frac{b}{a^{5/3}} \Bigr) \,,
\\
\xi_2 &= -a^{1/3} \mathcal{B}\Bigl(-\frac23;\frac13; \frac{b}{a^{5/3}} \Bigr) \,,
\\
\xi_3 &= e^{-i\pi/3} a^{1/3} \mathcal{B}\Bigl(-\frac23;\frac13; e^{i2\pi/3} \frac{b}{a^{5/3}} \Bigr) \,.
\end{align}
\end{subequations}
Using eq.~\eqref{eq:trinom_soln_2} yields two roots
\begin{subequations}
\begin{align}
\xi_4 &= i\sqrt{\frac{b}{a}}\, \mathcal{B}\Bigl(\frac52;\frac12; i \frac{b^{3/2}}{a^{5/2}} \Bigr) \,,
\\
\xi_5 &= -i\sqrt{\frac{b}{a}}\, \mathcal{B}\Bigl(\frac52;\frac12; -i \frac{b^{3/2}}{a^{5/2}} \Bigr) \,.
\end{align}
\end{subequations}
From Birkeland \cite{Birkeland_algebraic_1927}, we first define the following hypergeometric functions:
\begin{subequations}
\begin{align}
F_0 &= {}_4F_3\Bigl(-\frac{1}{15}, \frac{2}{15}, \frac{8}{15}, \frac{11}{15}; \frac56, \frac13, \frac23, \zeta\Bigr) \,,
\\
F_1 &= {}_4F_3\Bigl(\frac{4}{15}, \frac{7}{15}, \frac{13}{15}, \frac{16}{15}; \frac43, \frac76, \frac23, \zeta\Bigr) \,,
\\
F_2 &= {}_4F_3\Bigl(\frac{3}{5}, \frac{4}{5}, \frac{6}{5}, \frac{7}{5}; \frac53, \frac32, \frac43, \zeta\Bigr) \,,
\\
F_3 &= {}_4F_3\Bigl(\frac{1}{10}, \frac{3}{10}, \frac{7}{10}, \frac{9}{10}; \frac76, \frac12, \frac56, \zeta\Bigr) \,.
\end{align}
\end{subequations}
We also define $\omega_3$ as a primitive third root of unity. Then the roots are
\begin{subequations}
\begin{align}
x_1 &= \omega_3 F_0 +\omega_3^2\frac{\beta}{3} F_1 -\frac{\beta^2}{3} F_2 \,,
\\
x_2 &= \omega_3^2 F_0 +\omega_3\frac{\beta}{3} F_1 -\frac{\beta^2}{3} F_2 \,,
\\
x_3 &= F_0 +\frac{\beta}{3} F_1 -\frac{\beta^2}{3} F_2 \,,
\\
x_4 &= \frac{\beta^2}{2} F_2 -(-\beta)^{1/2} F_3 \,,
\\
x_5 &= \frac{\beta^2}{2} F_2 +(-\beta)^{1/2} F_3 \,.
\end{align}
\end{subequations}
Next, for $|\zeta|>1$, the Fuss-Catalan series for the roots are given by eq.~\eqref{eq:trinom_soln_1} 
\begin{equation}
\xi_\ell = e^{i\pi (2\ell-1)/5} b^{1/5} \mathcal{B}\Bigl(\frac25;\,\frac15;\, -e^{i\pi\frac{2(2\ell-1)}{5}}\frac{1}{b^{3/5}} \Bigr) \qquad (\ell=1,\dots,5) \,.
\end{equation}
Birkeland defined the following functions:
\begin{subequations}
\begin{align}
\psi_0 &= {}_4F_3\Bigl(-\frac{1}{15}, \frac{1}{10}, \frac{3}{5}, \frac{4}{15}; \frac45, \frac25, \frac15, \frac{1}{\zeta}\Bigr) \,,
\\
\psi_1 &= {}_4F_3\Bigl(\frac{2}{15}, \frac{3}{10}, \frac{4}{5}, \frac{7}{15}; \frac65, \frac35, \frac25, \frac{1}{\zeta}\Bigr) \,,
\\
\psi_2 &= {}_4F_3\Bigl(\frac{8}{15}, \frac{7}{10}, \frac{6}{5}, \frac{13}{15}; \frac85, \frac75, \frac45, \frac{1}{\zeta}\Bigr) \,,
\\
\psi_3 &= {}_4F_3\Bigl(\frac{11}{15}, \frac{9}{10}, \frac{7}{5}, \frac{16}{15}; \frac95, \frac85, \frac65, \frac{1}{\zeta}\Bigr) \,.
\end{align}
\end{subequations}
From Birkeland \cite{Birkeland_algebraic_1927}, the roots are (where $\omega_5$ is a primitive fifth root of unity)
\begin{equation}
  x_\ell = \beta^{1/5} \biggl(\omega_5^\ell\psi_0 +\omega_5^{3\ell}\frac{\beta^{-3/5}}{5}\psi_1
  -\omega_5^{2\ell}\frac{\beta^{-9/5}}{125} \psi_2 +\omega_5^{4\ell}\frac{\beta^{-12/5}}{625} \psi_3 \biggr)   \qquad (\ell=1,\dots,5) \,.
\end{equation}
In this scenario, all the Fuss-Catalan series for $\xi_\ell$ are given by fractional $\mu=\frac25$
and the roots $x_\ell$ are given by weighted sums of four hypergeometric series.

Overall, this demonstrates that if $\mu$ is an integer, a Fuss-Catalan series is equivalent to a single hypergeometric series,
but if $\mu$ is fractional, a Fuss-Catalan series is equivalent to a weighted sum of hypergeometric series.
There is no obvious pattern for that sum of hypergeometric series.

\vfill\pagebreak

\renewcommand\thefigure{\arabic{figure}}
%\vskip 0.2in
\vfill\pagebreak
\begin{figure}[!htb]
\centering
\includegraphics[width=1.0\textwidth]{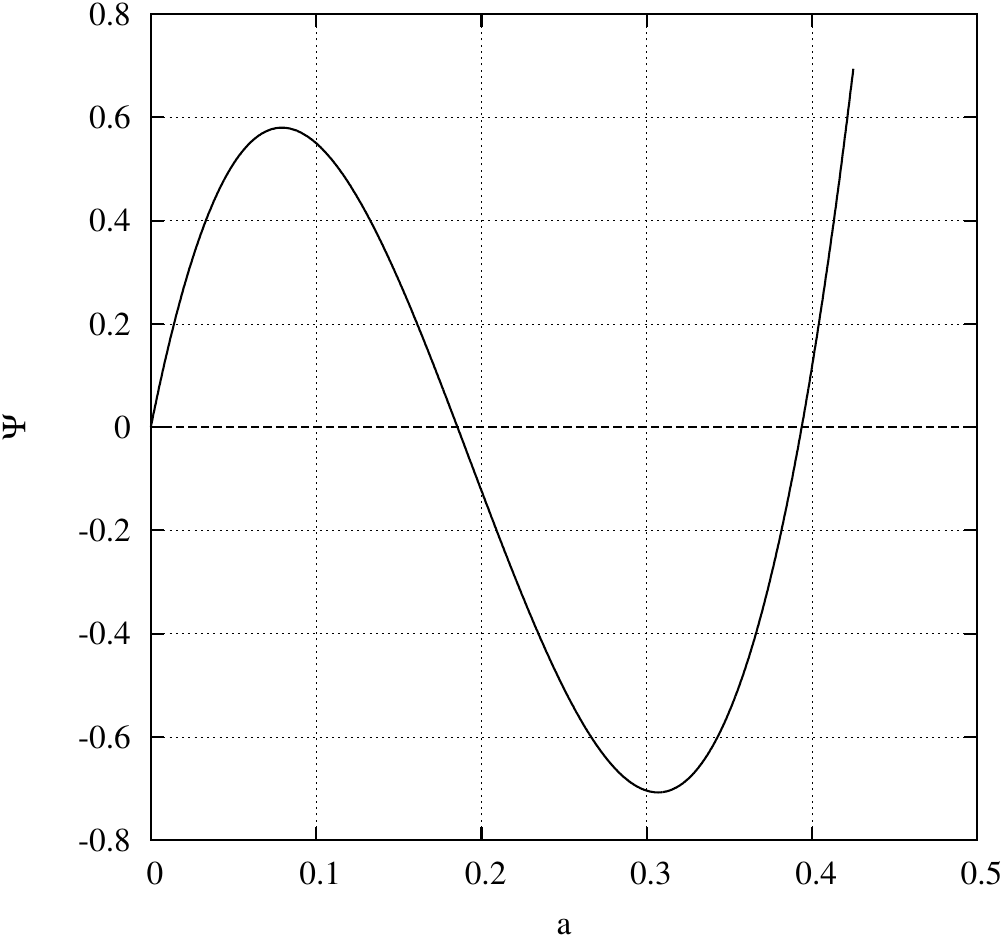}
\caption{\small
\label{fig:ray}
Graph of the discriminant $\Psi^-_{24}(-|a_0|,|a_1|)$ for $(|a_0|,|a_1|)=(a,\frac12 a)$, plotted as a function of $a$.
}
\end{figure}
%\vfill\pagebreak
%\vskip 0.2in

%\vskip 0.2in
\vfill\pagebreak
\begin{figure}[!htb]
\centering
\includegraphics[width=1.0\textwidth]{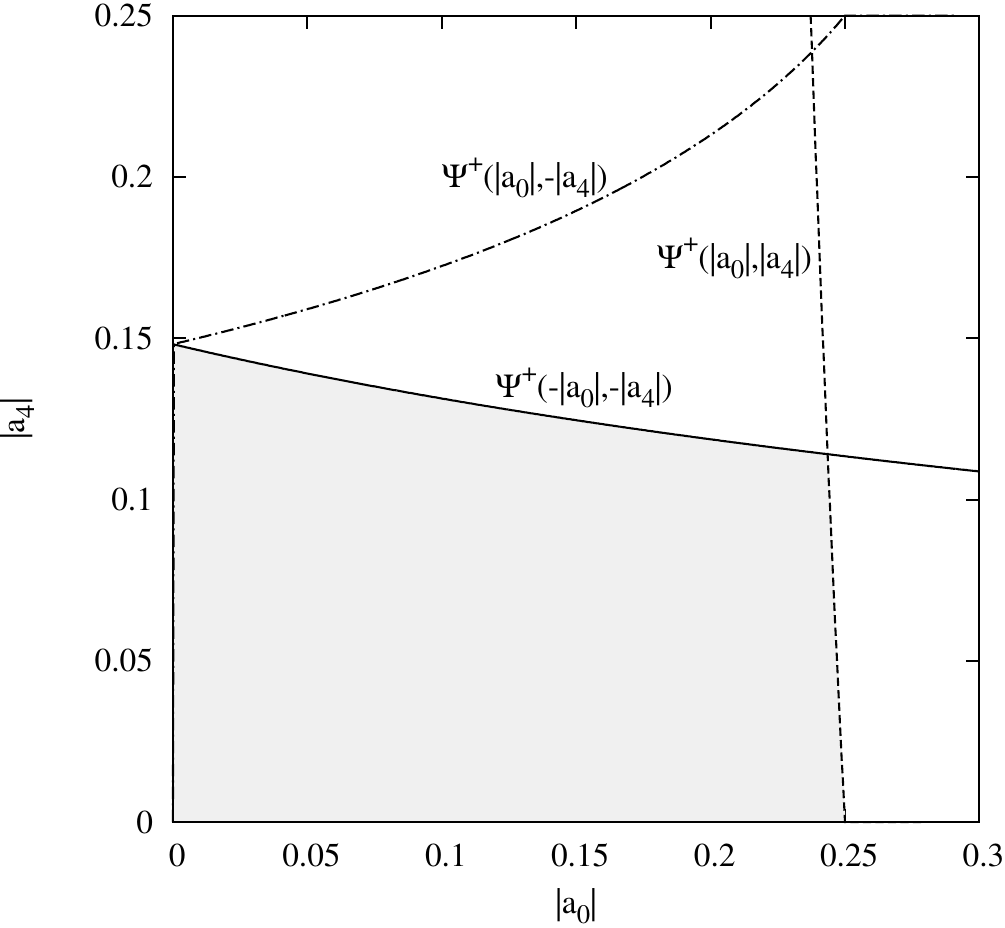}
\caption{\small
\label{fig:foliation_d12}
Graphs of the discriminant level sets 
$\Psi^+_{12}(|a_0|,|a_4|)=0$ (dashed), 
$\Psi^+_{12}(|a_0|,-|a_4|)=0$ (dotdash) and
$\Psi^+_{12}(-|a_0|,-|a_4|)=0$ (solid)
plotted in the $(|a_0|,|a_4|)$ parameter space.
Points which map into the shaded area lie in the domain of convergence. 
}
\end{figure}
%\vfill\pagebreak
%\vskip 0.2in

%\vskip 0.2in
\vfill\pagebreak
\begin{figure}[!htb]
\centering
\includegraphics[width=1.0\textwidth]{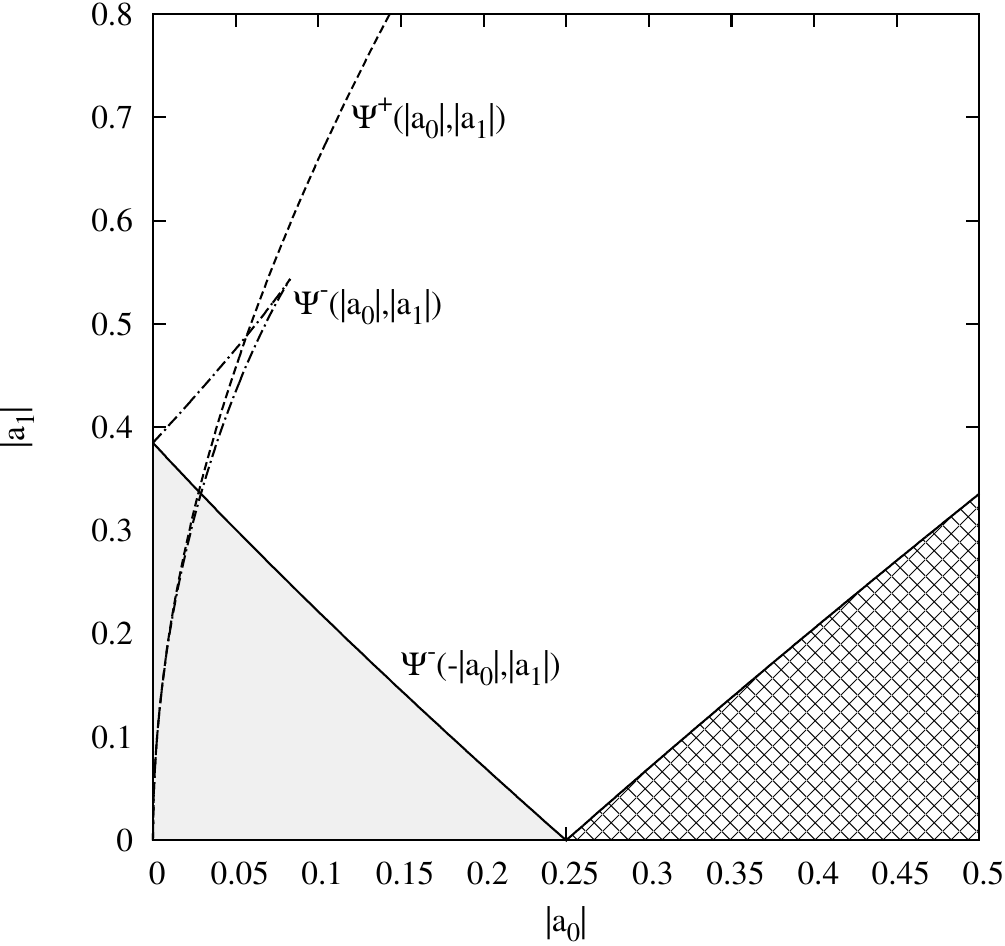}
\caption{\small
\label{fig:foliation_d24}
Graphs of the discriminant level sets 
$\Psi^+_{24}(|a_0|,|a_1|)=0$ (dashed), 
$\Psi^-_{24}(|a_0|,|a_1|)=0$ (dotdash) and
$\Psi^-_{24}(-|a_0|,|a_1|)=0$ (solid)
plotted in the $(|a_0|,|a_1|)$ parameter space.
Points which map into the shaded area lie in the domain of convergence. 
Points which map into the cross-hatched area are not in the domain of convergence. 
}
\end{figure}
%\vfill\pagebreak
%\vskip 0.2in

%\vskip 0.2in
\vfill\pagebreak
\begin{figure}[!htb]
\centering
\includegraphics[width=1.0\textwidth]{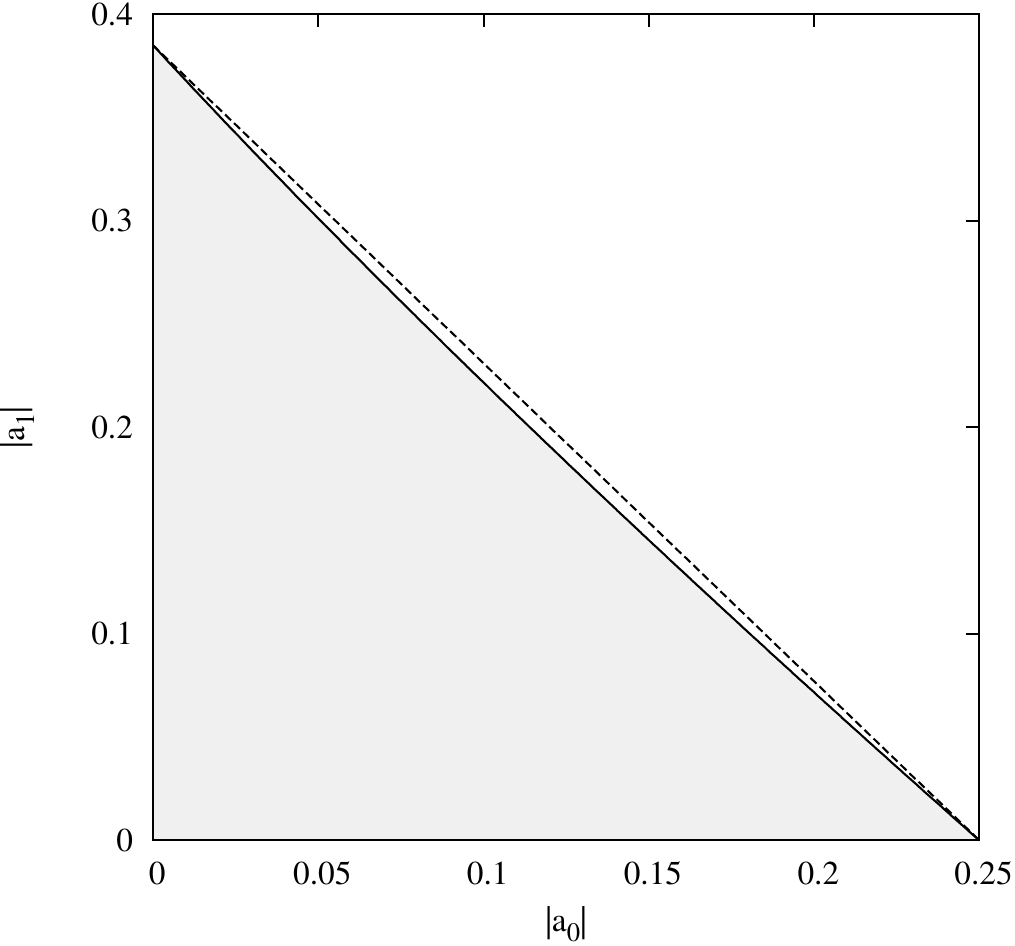}
\caption{\small
\label{fig:nonconvex_d24}
Magnified view of Fig.~\ref{fig:foliation_d24}.
The solid curve is the level set $\Psi^-_{24}(-|a_0|,|a_1|)=0$. 
The dashed line is a straight line which demarcates a right-angled triangle in the parameter space $(|a_0|,|a_1|)$.
}
\end{figure}
%\vfill\pagebreak
%\vskip 0.2in

\end{document}